\newtheorem{thm}{Theorem}[section]
\newtheorem{prop}[thm]{Proposition}
\newtheorem{cor}[thm]{Corollary}
\newtheorem{lemma}[thm]{Lemma}
\newtheorem{defn}[thm]{Definition}
\newtheorem{rem}[thm]{Remark}
\numberwithin{equation}{section}
\renewcommand{\k}{\ensuremath{\mathbf{k}}}
\newcommand{\xkn}{\ensuremath{X(\mathbf{k}^n)}}
\newcommand{\kkn}{\ensuremath{K(\mathbf{k}^n)}}
\def\Z{{\mathbb Z}}
\def\N{{\mathbb N}}
\def\F{{\mathbb F}}
\newcommand{\lra}{\longrightarrow}
\DeclareMathOperator{\spn}{span}
\DeclareMathOperator{\Tor}{Tor}
\DeclareMathOperator{\Lk}{Lk}
\DeclareMathOperator{\rank}{rank}
\DeclareMathOperator{\cat}{cat}
\DeclareMathOperator{\cp}{cup}
\begin{document}

\title{Universal complexes in toric topology}

\author{Djordje Barali\'{c}, Ale\v s Vavpeti\v c, Aleksandar Vu\v{c}i\'{c}}
\address{ \scriptsize{Mathematical Institute SASA, Belgrade, Serbia }}
\email{djbaralic@mi.sanu.ac.rs}
\address{Faculty of Mathematics and Physics, University of Ljubljana, Slovenia and Institute of Mathematics, Physics and Mechanics, Ljubljana, Slovenia}
\email{ales.vavpetic@fmf.uni-lj.si}
\address{\scriptsize{University of Belgrade, Faculty of Mathematics, Belgrade, Serbia}}
\email{aleksandar.vucic@matf.bg.ac.rs}

\subjclass[2020]{Primary  57S12, 55U05, Secondary 57Q70, 13F55.}

%\keywords{commutator, cycle, permutation, alternating group}

%\subjclass[2010]{20B05}

%\thanks{This research was supported by the Slovenian Research
%Agency grants P1-0292-0101 and J1-2057-0101.}

\begin{abstract}
We study combinatorial and topological properties of the universal complexes $X(\F_p^n)$ and $K(\F_p^n)$ whose simplices are certain unimodular subsets of $\F_p^n$. We calculate their $\mathbf f$-vectors and their Tor-algebras, show that they are shellable but not shifted, and find their applications in toric topology and number theory.
We showed that the Lusternick-Schnirelmann category of the moment angle complex of $X(\F_p^n)$ is $n$, provided $p$ is an odd prime, and the Lusternick-Schnirelmann category of the moment angle complex of $K(\F_p^n)$ is $[\frac n 2]$. Based on the universal complexes, we introduce the Buchstaber invariant $s_p$ for a prime number $p$.
\end{abstract}

\maketitle

\section{Introduction}

Toric topology is a contemporary active research area of mathematics in deep interaction with equivariant topology, commutative algebra, algebraic and symplectic geometry, combinatorics, and other disciplines. In the core of toric topology are topological spaces with torus actions defined in combinatorial terms for which simplicial complexes and polytopes play an important role.

Davis and Januszkiewicz, in their seminal paper \cite{DJ} motivated the study of quasitoric manifolds and small covers as topological generalizations of nonsingular projective toric variety and its real analog, respectively. Until the end of the 20th century, several other topological constructions appeared as a result of efforts to develop a topological generalization of toric varieties from algebraic geometry. Moment angle complexes and moment angle manifolds were major milestones in the development of toric topology that lead to penetration of categorical constructions in the field and further generalizations to the polyhedral product spaces. The interest in their topological properties, cohomology opened space for ideas from homotopy theory, commutative algebra, and combinatorics which provided new points of view. A substantial overview of the most significant results in this area may be found in the remarkable monograph \cite{BPbook} from 2015.

Universal complex is a simplicial complex whose maximal simplices are bases of the lattices of $\mathbb{Z}^n$ or $\F_2^n$. They are closely related to the fundamental geometric object known as Tits building studied in homological algebra, cohomology of groups, number theory, etc. For theory and applications of buildings, we refer the reader to monograph \cite{AbraBrown}. In their paper \cite{DJ}, Davis and Januszkiewicz first realized the importance of universal complexes in generalizing toric varieties. Indeed, for these purposes, they defined another universal complex of unimodular sequences in $\mathbb{Z}^n$ or $\F_2^n$ and described their close relation. They pointed out their remarkable homotopical properties and relation to Cohen-Macaulay complexes. In the following decades, toric topologists used these complexes in studying the Buchstaber invariant of simplicial complex or various classification problems in toric topology.

Toric topology found a lot of applications outside mathematics, like robotics, chemistry and the theory of fullerenes, and topological data analysis. New applications stipulate further investigation and calculation of more complex topological indexes like the Lusternik–Schnirelmann category and various types of topological complexity, see \cite{BebenGrbic3}.

The universal complexes and their topological properties were studied by Barali\'{c}, Grbi\'{c}, Vavpeti\v{c} and Vu\v{c}i\'{c} in \cite{BGVV}. They introduced mod $p$ Buchstaber invariant and found some application in number theory. The first version on arXiv in 2017 offered proof based on discrete Morse that the universal complex has the homotopy type of a wedge of spheres, but the argument was inaccurate. The authors provided direct evidence in the second version of \cite{BGVV} from 2020. However, the main result is just a corollary of \cite[Theorem~E]{Church} published in 2019.

The presented research in this article focuses on the toric topology of the universal complexes $X(\F_p^n)$ and $K(\F_p^n)$ and their Tor-algebras. Various aspects of the cohomology rings of the moment angle complexes of $X(\F_p^n)$ and $K(\F_p^n)$ have been studied. The results concerning $f$-vectors of the universal complexes and their fundamental properties from \cite{BGVV} are included in the paper. The section on the mod $p$ Buchstaber invariant and its properties from the unpublished arXiv version \cite{BGVV} is also extended and included in the article.

In Section 2, we review basic facts about the universal complexes with an emphasis on their universality property. We study universal complexes $X(\F_p^n)$ and $K(\F_p^n)$ and determine its f-vectors, where $p$ is prime. Here we show that the universal complexes are homotopy equivalent to a wedge of $(n-1)$-spheres.

Tor-algebras of the universal complexes are studied in Sections 3 and 4. The corresponding chain complexes have nice combinatorial descriptions, which are very useful when applying the algebraic Morse theory. We construct perfect Morse matching, which allows us to calculate the bigraded Betti numbers of the universal complexes and to prove that the Tor-algebras have no torsion.

Section 4 studies products in the Tor-algebras and the cup length in them. Besides the cup lengths, we determined the Lusternik–Schnirelmann category of the moment angle complexes over $X(\F_p^n)$ and $K(\F_p^n)$. It is a surprising fact that the two complexes sharing a lot of common features have distinct categories.

In Section 5, based on the universal complexes, we introduce the Buchstaber invariant $s_p$ for a prime number $p$. We extend classical results obtained by Ayzenberg, Erokhovets, Fukukawa, and Masuda on real and standard Buchstaber invariant to $s_p$ but also establish some new results that open door for further investigation.

\section{Universal complexes and their properties}

\begin{defn}
Let $\k$ be the field $\mathbb{F}_p$
or the ring $\mathbb{Z}$. A set $\{ v_1, \ldots , v_m \}$ of elements in $\k^n$ is called \emph{unimodular} if $\spn\{v_1,\ldots ,v_m\}$ is a direct summand of $\k^n$ of dimension $m$. The simplicial complex $\xkn$ on unimodular vertices $v_i\in \k^n$ consists of all unimodular subsets of $\k^n$.  Note that a subset of a unimodular set is itself unimodular.
\end{defn}

Another simplicial complex closely related to $ \xkn$ is the simplicial complex $\kkn$.

Define \textit{a line through the origin}, or shortly \textit{a line}, to be a 1-dimensional unimodular subspace of $\k^n$, that is, a submodule of rank $1$ which is a direct summand of $\k^n$. More precisely, for $\k = \F_p$, a line is an $1$-dimensional subspace of $\F_p^n$, while in the case $\k=\Z$, a line is an infinite cyclic subgroup of $\Z^n$ generated by an element which is an element of some basis of $\Z^n$. Denote by $P\k^n$ the set of all such lines in $\k^n$.
\begin{defn}
The simplicial complex $\kkn$ on the vertex set  $P\k^n$ is defined so that its $m$-simplex is a set of lines $\{l_1, \dots, l_{m+1}\}$, $l_i\in P\k^n$, which span an $(m+1)$-dimensional unimodular subspace of $\mathbf{k}^n$.
\end{defn}

The complexes $\xkn$ and $\kkn$ are called \emph{universal complexes}.

\subsection{Universality}

Our study of the simplicial complexes $\xkn$ and $\kkn$ has been motivated by an important role $K(\Z^n)$ and $K(\F_2^n)$ play in toric topology.

Let $G_d$ be $\Z_2$ when $d=1$ and $S^1$ when $d=2$. Spaces with $G_d^n$-actions  are objects studied in toric topology. Special attention is given to $dn$-manifolds with a locally standard $G_d^n$-action having an $n$-dimensional simple polytope $P^n$ as the orbit space. Under $G_d^n$ for $d=2$ we assume the standard torus $T^n=(S^1)^n$ and for $d=1$ the real torus $(\Z_2)^n$. Those manifolds are usually referred to as small covers for $d=1$ and quasitoric manifolds for $d=2$. They are topological generalizations of toric and real toric varieties. % In \cite{DJ}, Davis and Januszkiewicz proved that each small cover can be obtained from  $Y(\mathbb{Z}_2^n)$ and each quasitoric manifold from $Y(T^n)$ as a pullback via a suitable map.
Recall that  by \cite[Construction~5.12]{MR1897064} the quasitoric manifolds and small covers are determined up to an $G_d^n$-equivariant homeomorphism by a simple polytope $P^n$ and a characteristic map $\lambda\colon \mathcal{F}\rightarrow \pi_{d-1}(G_d^n)$, where $\mathcal{F}$ is the set of facets of $P$. In the case of quasitoric manifolds for each facet $F$ of $P^n$, the vector $\lambda (F)$ is determined up to sign. The characteristic map $\lambda$ satisfies a nondegeneracy condition, that is, $\{\lambda (F_{i_1}), \dots, \lambda (F_{i_k})\}$ is a unimodular set whenever the intersection of facets $F_{i_1}$, $\dots$, $F_{i_k}$ is nonempty. Let $\partial P^*$ be a simplicial complex which is the boundary of the dual to $P$. Then there is a simplicial map $\bar{\lambda}\colon \partial P^*\rightarrow K (\pi_{d-1}(G^n_d))$  defined on the vertices of $\partial P^*$ by $\lambda$ such that a vertex $v$ of $\partial P^*$ dual to the facet $F_v$ of $P$ is sent to the line $l (\lambda (F_v))$. The map $\lambda$ is a characteristic map if and only if $\bar{\lambda}$ is a nondegenerate simplicial map. In \cite[Proposition~2.1]{DJ}, Davis and Januszkiewicz summarized this property in the following proposition:

\begin{prop}
\label{univ}
\begin{itemize}[leftmargin = 8mm]
    \item[(a)] The set of equivalences classes of small covers over $P$ with respect to a $\mathbb{Z}_2^n$-equivariant homeomorphism is in bijection with the set of nondegenerate simplicial maps $f\colon \partial P^*\rightarrow K(\F_2^n)$ modulo natural action of $\mathrm{Aut} (\F_2^n)$.
    \item[(b)] The set of equivalences classes of quasitoric manifolds over $P$ a $T^n$-equivariant homeomorphism is in bijection with the set of nondegenerate simplicial maps $f\colon \partial P^*\rightarrow K(\Z^n)$ modulo natural action of $\mathrm{Aut} (\Z^n)$.
\end{itemize}
\end{prop}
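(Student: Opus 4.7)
The plan is to reduce the statement to a bookkeeping identification between the data of a characteristic map $\lambda$ and that of a nondegenerate simplicial map $\bar\lambda$ into the appropriate universal complex, then invoke the cited Construction~5.12 from \cite{MR1897064} to convert the classification of manifolds by characteristic maps into the desired bijection.

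First, for both $d=1$ and $d=2$, I would set up the translation on vertices. The vertex set of $\partial P^*$ is canonically identified with $\mathcal{F}$ by sending each vertex of $\partial P^*$ to the facet of $P$ it is dual to. In the small cover case ($d=1$), $\pi_0(G_1^n)=\F_2^n$, and a nonzero element of $\F_2^n$ is the same thing as a line in $\F_2^n$; hence $\lambda\colon\mathcal F\to\F_2^n\setminus\{0\}$ is exactly the vertex part $\bar\lambda_0$ of a map to $K(\F_2^n)$. In the quasitoric case ($d=2$), $\pi_1(G_2^n)=\Z^n$ and, by hypothesis, $\lambda(F)$ is defined only up to sign and must be part of a basis; thus a choice of such $\lambda$ up to sign is exactly a map $\bar\lambda_0\colon\mathrm{Vert}(\partial P^*)\to P\Z^n$. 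Next, I would observe that the nondegeneracy condition in the definition of a characteristic map says precisely that whenever $F_{i_1}\cap\cdots\cap F_{i_k}\neq\emptyset$ (i.e.\ $\{F_{i_1},\dots,F_{i_k}\}$ spans a simplex of $\partial P^*$), the images $\{\lambda(F_{i_j})\}$ form a unimodular set, which is exactly the statement that $\bar\lambda_0$ extends to a simplicial map $\bar\lambda\colon\partial P^*\to K(\k^n)$ carrying each $(k-1)$-simplex of the source to a $(k-1)$-simplex of the target; that is, $\bar\lambda$ is nondegenerate. This gives a bijection between characteristic maps $\lambda$ and nondegenerate simplicial maps $\bar\lambda$, which is clearly natural with respect to the $\mathrm{Aut}(\k^n)$-actions on both sides (post-composition on the one side, change of basis of $\k^n$ applied to the values of $\lambda$ on the other).

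Having this translation, part (a) and part (b) follow by invoking the quoted Construction~5.12: two small covers (resp.\ quasitoric manifolds) over $P$ are $G_d^n$-equivariantly homeomorphic over the identity of $P$ precisely when their characteristic maps agree, and the more general equivariant equivalence (allowing an automorphism of the acting group $G_d^n$ that covers the identity on $P$) introduces exactly the action of $\mathrm{Aut}(\pi_{d-1}(G_d^n))=\mathrm{Aut}(\k^n)$ on the set of characteristic maps. Passing to $\mathrm{Aut}(\k^n)$-orbits on each side and composing with the bijection $\lambda\leftrightarrow\bar\lambda$ established above yields the asserted bijections. I would expect the main obstacle to lie not in any of this translation, but in justifying that the equivariant-homeomorphism equivalence relation on manifolds corresponds exactly to the $\mathrm{Aut}(\k^n)$-action on characteristic maps—however, this is entirely the content of \cite[Construction~5.12]{MR1897064}, which we are permitted to assume, so once that is cited the remainder is a routine identification.
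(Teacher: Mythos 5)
Your proposal is correct and follows essentially the same route as the paper: the paper does not give a formal proof of this proposition (it attributes it to \cite[Proposition~2.1]{DJ}), but the paragraph preceding the statement sketches exactly the translation you carry out — identifying vertices of $\partial P^*$ with facets, turning the characteristic map $\lambda$ into a simplicial map $\bar\lambda$ into $K(\pi_{d-1}(G_d^n))$, matching the nondegeneracy conditions, and reducing the classification to \cite[Construction~5.12]{MR1897064}.
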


Therefore, the set of equivalences classes of $G_d^n$-manifolds over $P$ is in bijection with the set of nondegenarate simplicial maps $f\colon \partial P^*\rightarrow K(\pi_{d-1}(G^n_d))$ modulo natural action of $\mathrm{Aut} (\pi_{d-1}(G_d^n))$. This universal property of $K(\pi_{d-1}(G^n_d))$ further motivates the combinatorial and topological study of $K(\pi_{d-1}(G^n_d))$.

Using omniorientation, Buchstaber, Panov, and Ray~\cite{MR2337880} constructed a quasitoric representative for every complex cobordism class $\Omega^U$.  A choice of omniorientation on a given quasitoric manifold $M^{2n}$ over $P^n$ is equivalent to a choice of orientation on $P^n$ together with a choice of facet vectors $\lambda (F_{i})$.
More precisely, recall that a pair $(P,\Lambda )$ is called a combinatorial quasitoric pair if $P$ is an oriented combinatorial simple $n$-polytope with $m$ facets and $\Lambda$ is an $n\times m $ integer matrix with the property that for each vertex $v$ appropriate minor $\Lambda_v$ of $\Lambda$, consisting of columns whose facets contain $v$, is invertible. Two combinatorial quasitoric pairs $(P,\Lambda )$ and $(P',\Lambda ')$ are equivalent if $P=P'$ with the same orientation and $ \Lambda = \Phi \Lambda '$ for some $n \times n$ integer matrix $\Phi $ with determinant 1.

Buchstaber, Panov, and Ray~\cite{MR2337880} showed that there is a 1-1 correspondence between equivalence classes of omnioriented quasitoric manifolds and combinatorial quasitoric pairs.
We use this combinatorial description of omnioriented quasitoric manifolds to describe a universal property of $X(\Z^n)$.

\begin{prop}
\label{uni}
There is a 1-1 correspondence between equivalence classes of omnioriented quasitoric manifolds with positive orientation on $P$ and nondegenerate simplicial maps
\[
f\colon\partial P^* \to X(\Z^n)
\]
 modulo natural action of $\mathrm{Aut} (\mathbb{Z}^n)$.
\end{prop}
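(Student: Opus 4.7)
The plan is to construct mutually inverse assignments between the two sets using the Buchstaber--Panov--Ray correspondence between omnioriented quasitoric manifolds and combinatorial quasitoric pairs, and then interpret the characteristic matrix as a simplicial map into $X(\Z^n)$.

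First I would go from an equivalence class of omnioriented quasitoric manifolds with positive orientation on $P$ to a simplicial map. By the Buchstaber--Panov--Ray theorem cited in the excerpt, such a class is represented by a combinatorial quasitoric pair $(P,\Lambda)$. The matrix $\Lambda$ has columns indexed by the facets of $P$; equivalently, indexed by the vertices of $\partial P^*$. Define $f\colon\partial P^*\to X(\Z^n)$ on vertices by sending the vertex $v$ dual to the facet $F_v$ to the corresponding column of $\Lambda$, viewed as an element of $\Z^n$. To check this extends to a simplicial map, let $\{v_0,\ldots,v_k\}$ span a simplex of $\partial P^*$, so that $F_{v_0}\cap\cdots\cap F_{v_k}$ is a nonempty face of $P$ containing some vertex $w$. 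Because $P$ is simple, exactly $n$ facets meet at $w$, and the hypothesis on $\Lambda$ says that the $n\times n$ minor $\Lambda_w$ is invertible over $\Z$. Its columns therefore form a $\Z$-basis of $\Z^n$, and the subset $\{\lambda(F_{v_0}),\ldots,\lambda(F_{v_k})\}$ of that basis is unimodular, i.e.\ a simplex of $X(\Z^n)$. Nondegeneracy of $f$ is immediate from the same argument applied to maximal simplices.

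Next I would construct the reverse assignment. Given a nondegenerate simplicial map $f\colon\partial P^*\to X(\Z^n)$, let $\Lambda_f$ be the $n\times m$ integer matrix whose column for the facet $F_v$ is $f(v)\in\Z^n$. For each vertex $w$ of $P$, the $n$ facets meeting at $w$ are dual to the vertices of a maximal simplex of $\partial P^*$, and nondegeneracy says their images form a unimodular set of size $n$, hence a basis of $\Z^n$; the corresponding minor of $\Lambda_f$ is therefore invertible, so $(P,\Lambda_f)$ is a combinatorial quasitoric pair with the fixed positive orientation on $P$. The two assignments are tautologically inverse to one another.

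Finally I would check that the equivalence relations match. The natural action of $\mathrm{Aut}(\Z^n)$ on $X(\Z^n)$ sends the vertex $v\in\Z^n$ to $\Phi v$; pulling back along this action precomposes $f$ with the simplicial automorphism induced by $\Phi$, which on the matrix side is exactly left multiplication $\Lambda\mapsto\Phi\Lambda$. Thus the $\mathrm{Aut}(\Z^n)$-orbit of $f$ matches the equivalence class of $(P,\Lambda_f)$ under the relation in the cited combinatorial description. The main point to be careful about is the interplay between the orientation of $P$ (which is fixed positive, so on the quasitoric side only $\Phi$ with $\det\Phi=1$ appears in the equivalence) and the full automorphism group $\mathrm{Aut}(\Z^n)=GL_n(\Z)$ on the right; this is the only delicate step, and it is handled by noting that fixing the positive orientation on $P$ together with the omniorientation data already pins down the sign ambiguity, so that passing to the full $GL_n(\Z)$-orbit on simplicial maps introduces no further identifications beyond those already present in the combinatorial quasitoric pair equivalence.
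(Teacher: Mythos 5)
Your construction is exactly the paper's: read off the columns of $\Lambda$ as the values of a simplicial map $f\colon\partial P^*\to X(\Z^n)$ and conversely, with nondegeneracy of $f$ equivalent to invertibility of the vertex minors $\Lambda_v$. The paper's proof is in fact terser than yours — it gives only the two assignments and does not verify that $f$ is simplicial or that the equivalence relations agree — so your extra paragraphs are added value rather than deviation.

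One caveat on your final paragraph. You correctly notice the tension between the combinatorial quasitoric pair equivalence, which quotients only by $\Phi\in SL_n(\Z)$ (determinant $1$), and the ``modulo $\mathrm{Aut}(\Z^n)=GL_n(\Z)$'' in the Proposition. Your claim that ``passing to the full $GL_n(\Z)$-orbit introduces no further identifications'' is asserted but not actually argued: as written it would require showing that $(P,\Lambda)$ and $(P,\Phi\Lambda)$ for $\det\Phi=-1$ define the same omnioriented manifold once the orientation of $P$ is fixed positive, and that is not explained. The paper's own proof is silent on this point as well, so you haven't done worse than the source, but be aware that your resolution as stated is not a proof — it is the one genuinely delicate step of the proposition, and the sentence you devote to it is circular rather than substantive. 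Also a minor wording slip: the action of $\Phi$ on maps $f$ is post-composition with the induced automorphism of $X(\Z^n)$, not ``precomposition''; your matrix conclusion $\Lambda\mapsto\Phi\Lambda$ is correct for post-composition, so this is only a terminology issue.
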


\begin{proof}
For a combinatorial quasitoric pair $(P,\Lambda )$, where $P$ is an oriented combinatorial simple $n$-polytope with $m$ facets and $\Lambda$ is an $n\times m $ integer matrix with the property that for each vertex $v$ appropriate minor $\Lambda_v$ of $\Lambda$ is invertible, define a map $f\colon\partial P^*\to X(\Z^n)$ on vertices $v_i$ of $\partial P^*$ to be equal to the $i${th} column of $\Lambda$. The condition that $\Lambda_v$ is invertible for each vertex $v$ of $P$ is equivalent to $f$ being nondegenerate.

On the other hand, for a nondegenerate simplicial map $f\colon \partial P^*\to X(\Z^n)$ we construct $\Lambda$ by placing vectors $f(v_i)$ as appropriate columns and thus obtain a combinatorial quasitoric pair $(P,\Lambda)$.
\end{proof}

Note that $X(\F_2^n)\cong K(\F_2^n)$ and therefore, in Proposition~\ref{univ} we may use $\xkn$ instead of $\kkn$.

As a side remark, to avoid possible confusion, let us point out that Davis and Januszkiewicz in~\cite{DJ} denoted by $X(\mathbf{k}^n)$ the barycentric subdivision of simplicial complex of all unimodular subsets of $\k^n$.

\subsection{Basic properties}

The universal complexes $\xkn$ and $\kkn$ are closely connected with each other as we will show. We have the following map
\[
\phi \colon \xkn \to \kkn
\]
where $\phi (v)$ is the line through origin which contains $v$, that is, $\phi (v) = \{ r\cdot v \, | \, r \in \k \}$. Note that $\phi (v)$ is unimodular since $v$ is.

In order to define a map in the other direction, we need to have the ordering on $\k^n$.

In the case $\k = \F_p$ we represent $\F_p = \{0,1,\ldots , p-1\}$.

Now we define the ordering on $\F_p^n$ as follows:
\[
(a_1,\ldots ,a_n)< (b_1,\ldots , b_n) \text{ iff } a_1=b_1, \ldots , a_{k-1}=b_{k-1}, a_k< b_k.
\]
Now we have the map
\[
\xi \colon K(\F_p^n) \to X(\F_p^n)
\]
where $\xi (l)$ is the smallest element from the line $l$. We could actually pick any nonzero element from $l$.

In the case $\k = \Z$ for $\xi (l)$, we take the smallest element from the line $l$ among all elements whose first nonzero coordinate is positive. Note that $\xi (l)$  is a generator of $l$, that is,
\[
l = \{ r\cdot \xi (l) \, | \, r \in \Z \}
\]
and that a line $l$ has two generators $\xi(l)$ and $-\xi (l)$.

We need a criterion to recognize the simplices from $\xkn$ and $\kkn$.
To vectors $u_0,\ldots, u_{k}$ of $\k^n$ we associate the $n\times (k+1)$ matrix $U=\Big( u_0\cdots u_k\Big)$ whose columns contain given vectors $u_i$, $0\leq i\leq k$.

\begin{prop}

\label{GCD}
\begin{itemize}[leftmargin = 8mm]
\item[(a)]
A set $\{u_0,\ldots ,u_k \}$ of vertices from $X(\F_p^n)$ is a $k$-simplex in $X(\F_p^n)$ if and only if  $\rank (U)= k+1$.
\item[(b)]
A set $\{u_0,\ldots ,u_k \}$ of vertices from $X(\Z^n)$ is a $k$-simplex in $X(\Z^n)$ if and only if the greatest common divisor of all $(k+1)\times (k+1)$ minors of the matrix $U$ is equal to one.
\item[(c)]
A set $\{l_0,\ldots ,l_k \}$ of vertices from $\kkn$ is a $k$-simplex in $\kkn$ if and only if $\{ \xi (l_0),\ldots , \xi (l_k) \}$ is a $k$-simplex in $\xkn$.

\end{itemize}
\end{prop}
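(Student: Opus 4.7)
The plan is to extract all three claims directly from the definition of unimodular together with a few standard facts about free modules. Part (a) is essentially the observation that over a field every subspace is a direct summand, so unimodularity collapses to linear independence. Part (b) reduces to the Smith normal form, and part (c) follows from the fact that the chosen representative $\xi(l)$ generates $l$ as a $\k$-module.

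For (a), I would note that in $\F_p^n$ every subspace admits a complementary subspace, so the direct-summand clause in the definition of unimodular is automatic once the dimension is pinned down. Hence $\{u_0,\ldots,u_k\}$ is unimodular precisely when $\dim\spn\{u_0,\ldots,u_k\}=k+1$, which is exactly the condition $\rank(U)=k+1$. For (b), I would view $U$ as the matrix of the $\Z$-linear map $\Z^{k+1}\to\Z^n$ that sends the $i$-th standard basis vector to $u_{i-1}$. The submodule $\spn\{u_0,\ldots,u_k\}$ is a direct summand of $\Z^n$ of rank $k+1$ if and only if the quotient $\Z^n/\spn\{u_0,\ldots,u_k\}$ is torsion-free of rank $n-k-1$, equivalently all elementary divisors $s_1\mid\cdots\mid s_{k+1}$ of $U$ equal $1$. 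Using the standard identity $s_1\cdots s_i=d_i$, where $d_i$ is the $i$-th determinantal divisor (the gcd of all $i\times i$ minors), the divisibility chain $s_1\mid\cdots\mid s_{k+1}$ makes $s_1=\cdots=s_{k+1}=1$ equivalent to the single condition $d_{k+1}=1$, which is exactly the claim.

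For (c), I would observe that in both the field and the integer case $\xi(l)$ is by construction a generator of $l$, so $\spn\{l_0,\ldots,l_k\}=\spn\{\xi(l_0),\ldots,\xi(l_k)\}$ as submodules of $\k^n$. Moreover, distinct lines $l_i\ne l_j$ give rise to distinct generators $\xi(l_i)\ne\xi(l_j)$, so the two sets have the same cardinality $k+1$. Hence the span is a unimodular $(k+1)$-dimensional direct summand viewed through the lines iff it is viewed through the generators, which is precisely the equivalence of the two simplex conditions. No step is particularly delicate; the main thing to keep straight is that over $\F_p$ the direct-summand clause is automatic, while over $\Z$ it is the nontrivial Smith-form condition driving part (b) and hence the gcd criterion.
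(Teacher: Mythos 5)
Your proof is correct and parts (a) and (c) are essentially the same as the paper's. For part (b), however, you take a cleaner conceptual route: you invoke Smith normal form / the theory of elementary divisors to connect the direct-summand condition on $\spn\{u_0,\ldots,u_k\}$ to the determinantal divisor $d_{k+1}$ of $U$ via the identity $d_{k+1}=s_1\cdots s_{k+1}$. The paper instead carries out the reduction by hand: one direction is a determinant expansion showing the gcd of the $(k+1)\times(k+1)$ minors divides $\pm 1$, and the converse uses an explicit lemma on row/column operations $T_{(i,j;\lambda)}$ and $S_{(i,j;\lambda)}$ to iteratively reduce $U$ to $\left(e_1\cdots e_{k+1}\right)$ (tracking that the gcd of minors is preserved), then reverses the operations to extend to a basis of $\Z^n$. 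In effect the paper re-derives a special case of Smith normal form; your version is shorter by citing it, while the paper's is more self-contained and elementary. Both are valid. One small point worth making explicit in your argument: the gcd condition $d_{k+1}=1$ simultaneously encodes that the rank is $k+1$ (since otherwise all $(k+1)\times(k+1)$ minors vanish and $d_{k+1}=0$) and that the nonzero elementary divisors are all $1$; you wrote this but it deserves emphasis, since the definition of unimodular requires both the direct-summand property and that the dimension equals $k+1$.
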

\begin{proof}
The proof of part (a) follows because $\F_p$ is a field and the fact that an $n \times (k+1)$ matrix can be extended to an invertible $n\times n$ matrix if and only if its rank is $k+1$.

The proof of part (b) is longer and we will give it in detail.

Since the vectors $u_0, u_1, \ldots, u_{k}$ belong to a basis of $\Z^n$, there exist vectors $v_{k+1}, \dots, v_{n-1}$ of $\Z^n$ such that $\det \Big(u_0 \cdots u_k v_{k+1} \cdots v_{n-1}\Big)=\pm 1$. For $J=\{j_0, j_1, \dots, j_k\}\subset\{0, 1, \dots, n-1\}$, let $U_J$  denote the $(k+1)\times (k+1)$ minor formed out of  $j_0$-th, $j_1$-st, $\dots$, $j_{k}$-th rows of the matrix $U=\Big( u_0\cdots  u_k\Big)$.
Let $\rho$ be a permutation of the set $\{i_1, i_2, \dots, i_{n-k-1}\}=\{0, \dots, n-1\}\setminus J$. Then for all $J\subset \{0, 1, \dots, n-1\}$, $|J|=k+1$ and permutations $\rho$ of the set $\{0, 1, \dots, n-1\}\setminus J$, there exist $\varepsilon_{J, \rho}=\pm 1$ such that
\[
\det \Big(u_0 \cdots  u_k v_{k+1} \cdots v_{n-1}\Big)=\]
\[
\sum_{\substack{J \subset \{0, 1, \dots, n-1\}\\|J|=k+1} }\sum_{\rho}\varepsilon_{J, \rho} v_{k+1} (\rho (i_1)) v_{k+2} (\rho (i_2))\cdots v_{n-k}(\rho (i_{n-k}))  U_J =\pm 1.
\]
The claim follows directly from the determinant expansion above,
as the greatest common divisor of all $(k+1)\times (k+1)$ minors of the matrix $U$ must divide $1$.

Now we prove that if the greatest common divisor of all $(k+1)\times (k+1)$ is $1$, the set of vectors $u_0,\ldots, u_{k}$ can be  extended to a basis of $\Z^n$, that is, there is a matrix $ \Big(u_0 \cdots u_k v_{k+1} \cdots v_{n-1}\Big)$ whose determinant is $\pm 1$. Recall that due to the condition on $(k+1)\times (k+1)$ minors, $\gcd (u_i(1), \dots, u_i(n))=1$ for all $0\leq i\leq k$.

Let $A$ be an integer matrix. Let $T_{(i, j;\lambda)}$, $i\neq j$ be a transformation that adds the $j$-th row multiplied by $\lambda$ to the $i$-th row and let $S_{(i, j;\lambda)}$ be a transformation that adds $j$-th column multiplied by $\lambda $ to the $i$-th column. Denote by $\bar{T}_{(i, j;\lambda)}\colon =T_{(i, j;-\lambda)}$  and $\bar{S}_{(i, j;\lambda)}\colon =S_{(i, j;-\lambda)}$. Define the relation $\sim$ on the set of matrices by saying that  $A\sim B$ if matrix $B$ can be obtained from $A$ by applying  transformations $T$ and $S$ finitely many times. The following identities hold
\[
(\bar{T}\circ T)(A)=A \text{ and } (\bar{S}\circ S)(A)=A.
\]
Thus, the relation $\sim$ is an equivalence relation.

\begin{lemma}\label{apply} Suppose that $A$ is a $n\times k$ matrix and let $\gcd (a_{1, 1}, \dots, a_{1, n})=d.$ Then $A\sim B$, where $B$ is a $n\times k$ matrix such that $b_{1, 1}=d$, $b_{1, 2}=\cdots=b_{1, k}=0$. Moreover, if $d=1$ matrix $B$ may also satisfy that $b_{2, 1}=\cdots=b_{n, 1}=0$.
\end{lemma}

\begin{proof}
For $n=2$, the statement follows by the Euclidian algorithm for $\gcd(a_{1, 1}, a_{1, 2})=d$ which determines a sequence of $T$ transformations, which for $d=1$ are followed by a sequence of $S$ transformations. As $\gcd (a_{1, 1}, \dots, a_{1, n})=\gcd (a_{1, 1}, (a_{1, 2} \dots, a_{1, n}))$, the claim for general $n$ is obtained using induction on $n$ and the base case $n=2$.
\end{proof}

After applying $T_{(i, j;\lambda)}$ to $U$, the $(k+1)\times(k+1)$ minor $U'_J$ corresponding to a set $J=\{j_0, j_1, \dots, j_k\}\subset\{0, 1, \dots, n-1\}$ in matrix $T_{(i, j;\lambda)}(U)$ is equal to $U_J$ if $i\not\in J$ or both $i, j\in J$. If $i\in J$ and $j\not\in J$, then $U'_J=U_J \pm \lambda U_{(J\setminus \{i\})\cup \{j\}}$. However, the greatest common divisor of all $(k+1)\times (k+1)$ minors remains $1$ in $T_{(i, j;\lambda)}(U)$.  The same holds for the matrix $S_{(i, j;\lambda)}(U)$ because this matrix preserves all $(k+1)\times (k+1)$ minors.

Since $\gcd (u_1(1), \dots, u_1(n))=1$, by \Cref{apply} we have that $U\sim \Big(e_1 u'_1 \cdots u'_k\Big)$ and the greatest common divisor of all $k\times k$ minors of $\Big( u'_1 \cdots u'_k\Big)$ is equal to $1$. Then $\gcd (u'_1(1), \dots, u'_1(n))=1$ and by applying~\Cref{apply} to $\Big( u'_1 \cdots u'_k\Big)$  we obtain that $U\sim\Big( e_1 e_2 u''_2 \cdots u''_k\Big) $. Continuing this process in the end, we obtain that $U\sim \Big(e_1 e_2 \cdots e_{k+1}\Big)$. Thus, there is a sequence $P_1, \dots, P_t$, where each $P_i$ is some $T$ or $S$ transformation such that
\[
P_t \circ (P_{t-1} \circ (\cdots \circ (P_1))\cdots) (U)=\Big(e_1 e_2, \cdots e_{k+1}\Big).
\]
Then
\[
\bar{P}_1 \circ (\bar{P}_{2} \circ (\cdots \circ (\bar{P}_t))\cdots) \Big(e_1 e_2 \cdots e_{n-1}\Big)=\Big(u_0 u_1 \cdots u_{k} v_{k+1} \cdots v_{n-1}\Big)
\]
for some vectors $v_{k+1}, \dots, v_{n-1}\in \Z^n$. Our claim follows since the transformations $T$ and $S$ preserve the determinant (up to the sign).

For part (c) note that if $\{ l_0,\ldots ,l_k\}$ is a simplex in $\kkn$ then $ \spn (l_0\cup \cdots \cup l_k )$
is a $(k+1)$-dimensional unimodular subspace in $\k^n$ generated by $\{\xi(l_0), \ldots , \xi(l_k)\}$. Therefore, $\{\xi(l_0), \ldots , \xi(l_k)\}$ is a $k$-simplex in $\xkn$. On the other hand, if $\{\xi(l_0), \ldots , \xi(l_k)\}$ is a $k$-simplex in $\xkn$ then by definition $\{l_0,\ldots ,l_k\}$ is a $k$-simplex in $\kkn$.
\end{proof}

We obtain the following corollaries from part (c) and the definition of $\phi$ and $\xi$.

\begin{cor}
\label{nondeg}
The maps $\phi$ and $\xi$ are nondegenerate simplicial maps.
Moreover $\phi \circ \xi = id_{\kkn}$.
\end{cor}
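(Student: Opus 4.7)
The plan is to verify each of the three assertions separately, since all of them reduce to bookkeeping on top of Proposition~\ref{GCD} and the very definitions of $\phi$ and $\xi$.

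First I would dispose of the identity $\phi\circ\xi=\mathrm{id}_{K(\mathbf{k}^n)}$. By construction $\xi(l)$ is a generator of the line $l$: in the $\mathbb{F}_p$ case it is a nonzero element of $l$, and in the $\mathbb{Z}$ case it is a primitive generator of $l$ with positive leading nonzero coordinate. In either case $\{r\cdot\xi(l):r\in\mathbf{k}\}=l$, so $\phi(\xi(l))=l$.

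Next I would handle nondegeneracy of the two maps on vertices of a common simplex. For $\xi$, if $l\neq l'$ are two distinct lines, then they share no nonzero element, so in particular $\xi(l)\neq\xi(l')$; thus $\xi$ is injective on vertices, hence nondegenerate. For $\phi$, suppose $\{u_0,\ldots,u_k\}$ is a simplex of $X(\mathbf{k}^n)$ and $\phi(u_i)=\phi(u_j)$ for some $i\neq j$. Then $u_j=c\cdot u_i$ for some $c\in\mathbf{k}$, so the columns of the associated matrix $U$ indexed by $i$ and $j$ are linearly dependent; this contradicts Proposition~\ref{GCD}(a)/(b), which would require rank $k+1$ (over $\mathbb{F}_p$) or GCD of the maximal minors equal to $1$ (over $\mathbb{Z}$). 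So distinct vertices of a simplex of $X(\mathbf{k}^n)$ map to distinct lines.

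Finally I would verify that $\phi$ and $\xi$ are simplicial, which is where Proposition~\ref{GCD}(c) does all the work. For $\xi$, part (c) is exactly the statement that $\{l_0,\ldots,l_k\}$ is a simplex of $K(\mathbf{k}^n)$ if and only if $\{\xi(l_0),\ldots,\xi(l_k)\}$ is a simplex of $X(\mathbf{k}^n)$. For $\phi$, let $\{u_0,\ldots,u_k\}$ be a simplex of $X(\mathbf{k}^n)$; by the nondegeneracy established above the lines $\phi(u_0),\ldots,\phi(u_k)$ are pairwise distinct, and part (c) reduces the claim that $\{\phi(u_0),\ldots,\phi(u_k)\}$ is a simplex of $K(\mathbf{k}^n)$ to the claim that $\{\xi(\phi(u_0)),\ldots,\xi(\phi(u_k))\}$ is a simplex of $X(\mathbf{k}^n)$. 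Each $\xi(\phi(u_i))$ is a unit multiple of $u_i$ (a nonzero scalar in $\mathbb{F}_p$ or $\pm u_i$ in $\mathbb{Z}$), so the matrix with columns $\xi(\phi(u_i))$ is obtained from $U$ by scaling columns by units, which preserves both the rank and the GCD of the maximal minors. Thus Proposition~\ref{GCD}(a)/(b) transfers the simplex condition from $U$ to the new matrix, and we are done.

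I do not expect a serious obstacle here; the only thing to be careful about is the appeal to Proposition~\ref{GCD}(c) in the $\phi$ direction, where one must note that the $\xi\circ\phi$ rescaling by units does not affect being unimodular. Everything else is immediate from the definitions.
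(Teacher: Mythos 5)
Your proof is correct and follows the same route the paper intends: the paper simply remarks that the corollary follows from Proposition~\ref{GCD}(c) and the definitions of $\phi$ and $\xi$, and your write-up just makes the three routine verifications (injectivity on vertices, simpliciality via part~(c) and unit-rescaling, and $\phi\circ\xi=\mathrm{id}$) explicit.
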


Those two maps give us that in the Proposition~\ref{univ}, we may use $\xkn$ instead of $\kkn$.

We may identify $\kkn$ with the image of $\xi$ and get the following.

\begin{cor}
\label{retract}
The map
 \[
\xi \circ \phi \colon \xkn \to \xi (\kkn)
\]
is a retraction.
\end{cor}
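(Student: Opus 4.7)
The plan is straightforward: unpack the definitions and use Corollary~\ref{nondeg} as the sole input. A retraction $r\colon Y\to A$ onto a subspace $A\subset Y$ is a map whose restriction to $A$ is the identity, so I need to verify two things about $r=\xi\circ\phi$: that $\xi(K(\k^n))$ is in fact a (sub)complex of $X(\k^n)$ onto which $r$ lands, and that $r$ fixes every simplex of $\xi(K(\k^n))$ pointwise.

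For the first step, I would observe that by Corollary~\ref{nondeg} the map $\xi\colon K(\k^n)\to X(\k^n)$ is a nondegenerate simplicial map, so its image $\xi(K(\k^n))$ is a simplicial subcomplex of $X(\k^n)$; and of course $\xi\circ\phi$ has image contained in the image of $\xi$. So the only real content is the identity-on-the-image property.

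For the second step, I would pick an arbitrary vertex in $\xi(K(\k^n))$, i.e.\ one of the form $v=\xi(l)$ for some $l\in P\k^n$. Applying $\phi$ to this vertex gives the line through the origin containing $\xi(l)$; but by the very definition of $\xi$, the element $\xi(l)$ is a generator of the line $l$ (in the $\F_p$ case it is nonzero in $l$, and in the $\Z$ case it is explicitly described as a generator in the excerpt). Hence $\phi(\xi(l))=l$, so $\phi\circ\xi=\mathrm{id}_{K(\k^n)}$ — this is already half of Corollary~\ref{nondeg}. Applying $\xi$ once more yields $\xi(\phi(\xi(l)))=\xi(l)=v$. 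Thus $\xi\circ\phi$ fixes each vertex of $\xi(K(\k^n))$, and since it is a simplicial map (composition of two simplicial maps by Corollary~\ref{nondeg}) it fixes every simplex, so $(\xi\circ\phi)|_{\xi(K(\k^n))}=\mathrm{id}$.

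There is no real obstacle here; the statement is essentially a formal consequence of $\phi\circ\xi=\mathrm{id}_{K(\k^n)}$. The only thing to be careful about is the standard fact that a map $r$ with $r\circ i = \mathrm{id}$ (where $i$ is an inclusion) combined with $r$ landing in $A$ yields a retraction — here $A=\xi(K(\k^n))$ is identified with $K(\k^n)$ via $\xi$, and the two relations $\phi\circ\xi=\mathrm{id}$ and ``image of $r$ lies in $A$'' together give exactly $r|_A=\mathrm{id}_A$ after this identification.
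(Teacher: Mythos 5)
Your proposal is correct and follows essentially the same route the paper takes, which it leaves implicit: the paper simply says one "may identify $\kkn$ with the image of $\xi$" and then cites $\phi\circ\xi=\mathrm{id}_{\kkn}$ from Corollary~\ref{nondeg}. Your verification — that $\xi\circ\phi$ fixes each vertex $\xi(l)$ because $\xi(\phi(\xi(l)))=\xi(l)$, and hence is the identity on the subcomplex $\xi(\kkn)$ since it is simplicial — is exactly the content of that identification, just spelled out.
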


\subsection{\texorpdfstring{$f$}{f}-vectors of \texorpdfstring{$X(\F_p^n)$}{X(Fp)} and \texorpdfstring{$K(\F_p^n)$}{K(Fp)}}

The \textit{f-vector} of an $(n-1)$-dimensional simplicial complex
$\mathcal{K}$ is the integer vector
$$
\textbf{f}
(\mathcal{K})=\left(f_{-1}, f_0, f_1, \dots, f_{n-1}\right)
$$
where
$f_{-1}=1$ and $f_i=f_i (\mathcal{K})$ denotes the number of $i$-faces
of $\mathcal{K}$ for all $i\geq0$.
The \textit{$f$-polynomial} of  an $(n-1)$-dimensional
simplicial complex $\mathcal{K}$ is
\[
\textbf{f} (t)=t^n+f_0
t^{n-1}+\dots+f_{n-1}.
\]
The \textit{Euler characteristic} of a simplicial complex $\mathcal{K}$ is given by
\begin{equation}
\label{euler} \chi (\mathcal{K})=f_0-f_1+\cdots+(-1)^{n-1}
f_{n-1}.
\end{equation}

The $f$-vectors of $X(\F_p^n)$, $K(\F_p^n)$, and the links of their simplices can be explicitly described in terms of $p$ and $n$. The $f$-vectors carry important combinatorial information of $X(\F_p^n)$ and $K(\F_p^n)$, and knowing them helps to determine not only the homotopy type of the universal complexes $X(\F_p^n)$ and $K(\F_p^n)$ but also the Buchstaber invariant of simplicial complexes.

Since the simplicial complexes $X(\F_p^n)$ and $K(\F_p^n)$ are given as the sets of unimodular subsets of $\mathbb F^n_p$, they are clearly  pure, $(n-1)$-dimensional simplicial complexes and come with a transitive and simplicial action of $GL(n, \mathbb F_p)$. Therefore to find the $f$-vectors of $X(\F_p^n)$ and $K(\F_p^n)$ and of the links of their simplices, it is enough to compute the number of linearly independent $i$-tuples in $\mathbb F^n_p$, which is a standard exercise in linear algebra. For example, in the case of $X(\F_p^n)$, the calculation reduces to solving the recurrence relation $(i+2)f_{i+1}(X(\F_p^n))=(p^n-p^{i+1})f_i(X(\F_p^n))$.

\begin{lemma}
\label{month}

\begin{enumerate}[label={(\alph*)},ref={\thelemma~(\alph*)},leftmargin = 8mm]
\item  \label{month1} The $f$-vector of $X (\mathbb{F}_p^n)$ is
given by
\begin{equation*}
 f_{i} (X(\mathbb{F}_p^n))= \frac{(p^n-p^{i}) \cdots (p^n-p^0)}{(i+1)!}
\end{equation*}
for $0\leq i\leq n-1$ and $f_{-1}=1$.

\item  \label{month2}The $f$-vector of $K (\mathbb{F}_p^n)$ is
given by
\begin{equation*}
 f_{i} (K (\mathbb{F}_p^n))= \frac{(p^n-p^{i}) \cdots (p^n-p^0)}{(p-1)^{i+1} (i+1)!}
\end{equation*}
for $0\leq i\leq n-1$ and $f_{-1}=1$.

\item  \label{month3}Let $\sigma $ be an $m$-simplex in
$X(\mathbb{F}_p^n)$. The $f$-vector of
$\Lk_{X(\mathbb{F}_p^n)}(\sigma)$ is given by
\begin{equation*}
f_i(\Lk_{X (\mathbb{F}_p^n)}(\sigma)) =
\frac{(p^n-p^{m+1})\cdots (p^n-p^{m+i+1})}{(i+1)!}
\end{equation*}
for $0\leq i\leq n-m-2$ and $f_{-1}=1$.

\item  \label{month4} Let $\sigma $ be an $m$-simplex in  $K
(\mathbb{F}_p^n)$. The $f$-vector of $\Lk_{K
(\mathbb{F}_p^n)}(\sigma)$ is given by
\begin{equation*}
f_i(\Lk_{K (\mathbb{F}_p^n)}(\sigma)) =
\frac{(p^n-p^{m+1})\cdots (p^n-p^{m+i+1})}{(p-1)^{i+1}(i+1)!}
\end{equation*}
for $0\leq i\leq n-m-2$ and $f_{-1}=1$.
\end{enumerate}
\qed
\end{lemma}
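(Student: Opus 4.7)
The plan is to reduce all four statements to the same standard count of ordered linearly independent tuples in $\F_p^n$, and then convert to simplex counts by dividing by the appropriate symmetry. By Proposition~\ref{GCD}(a), an $i$-simplex of $X(\F_p^n)$ is an unordered set of $i+1$ linearly independent vectors; by Proposition~\ref{GCD}(c) together with the definition of $K(\F_p^n)$, an $i$-simplex of $K(\F_p^n)$ is an unordered set of $i+1$ lines whose generators are linearly independent. This makes the count of $i$-simplices in each complex an immediate corollary of the classical flag count: the number of ordered $(i+1)$-tuples of linearly independent vectors in $\F_p^n$ equals $\prod_{j=0}^{i}(p^n - p^j)$.

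For part (a), I would simply observe that in an ordered linearly independent tuple the first vector can be any of the $p^n-1 = p^n-p^0$ nonzero vectors, and once $v_0,\dots,v_{j-1}$ have been chosen, the next vector $v_j$ must avoid their $j$-dimensional span, leaving $p^n-p^j$ choices. Dividing by $(i+1)!$ to pass from ordered tuples to unordered simplices proves (a). For part (c), the same inductive argument applies verbatim after fixing the $(m+1)$-dimensional subspace $\spn(\sigma)$: the vector $u_0$ must avoid a subspace of size $p^{m+1}$, and each subsequent $u_j$ must avoid $\spn(\sigma\cup\{u_0,\dots,u_{j-1}\})$, yielding $\prod_{j=0}^{i}(p^n-p^{m+j+1})$ ordered tuples, which divided by $(i+1)!$ gives the stated formula.

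For parts (b) and (d), the only additional input is that every line of $P\F_p^n$ has exactly $p-1$ nonzero generators, and by Proposition~\ref{GCD}(c) one may replace a simplex of $K(\F_p^n)$ by any choice of generators of its lines and still obtain a simplex of $X(\F_p^n)$. Consequently, each unordered $i$-simplex of $K(\F_p^n)$ corresponds to exactly $(p-1)^{i+1}(i+1)!$ ordered tuples of linearly independent vectors in $\F_p^n$: $(i+1)!$ orderings times $(p-1)^{i+1}$ independent choices of generator per line. Combining this with the tuple counts above yields formulas (b) and (d) at once; in (d) one starts the count from the fixed subspace $\spn(\sigma)$ exactly as in (c).

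There is no genuine obstacle here; the argument is entirely a bookkeeping exercise. The only point that needs care is to identify the right symmetry group in each setting, namely $(i+1)!$ for $X(\F_p^n)$ versus $(p-1)^{i+1}(i+1)!$ for $K(\F_p^n)$. The recurrence $(i+2)f_{i+1}(X(\F_p^n)) = (p^n-p^{i+1})f_i(X(\F_p^n))$ mentioned before the statement can be read off directly from this counting and provides an alternative, purely recursive route to (a), and then to (b)–(d) by the same reductions.
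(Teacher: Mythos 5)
Your argument is correct and follows exactly the counting approach the paper indicates; the paper merely remarks that one computes the number of linearly independent $i$-tuples in $\F_p^n$ (a standard exercise, with the recurrence $(i+2)f_{i+1}=(p^n-p^{i+1})f_i$ given as a hint) and leaves the details to the reader. Your proposal faithfully fills in those details, including the factor $(p-1)^{i+1}$ accounting for generator choices in $K(\F_p^n)$ and the shift to $p^{m+1},\dots,p^{m+i+1}$ when counting over the fixed subspace $\spn(\sigma)$ for the link formulas.
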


In the next lemma,  the count of minimal nonsimplices of $X(\F_p^n):=\mathcal{X}^n$ is given.

\begin{lemma}
\label{p nms}
Let $n\geq 2$, $1\leq j\leq n$, and let $v_0,\ldots , v_j$ be vertices in $\mathcal{X}^n$.
\begin{itemize}
    \item [(a)]
    The set $A:=\{ v_0,\ldots ,v_j\} $ is a minimal $j$-nonsimplex in $\mathcal{X}^n$ iff $\{v_0,\ldots ,v_{j-1}\}$ is a simplex in $\mathcal{X}^n$ and $v_j=\sum_{t=0}^{j-1}  a_tv_t$, $a_t\in \F_p\setminus \{0\}$.
    \item[(b)] For $j\geq 2$,
    \[
    |\{\text{minimal $j$-nonsimplices of } \mathcal{X}^n\}|=\frac{(p^n-1)(p^n-p)\cdots (p^n-p^{j-1})\cdot (p-1)^{j}}{(j+1)!}.
    \]
    \item[(c)] For $j=1$,
    \[
     |\{\text{minimal $1$-nonsimplices of } \mathcal{X}^n\}|=\frac{(p^n-1)(p-2)}{2}.
    \]
\end{itemize}
\end{lemma}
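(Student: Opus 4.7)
The plan is to first establish part (a), which gives a purely linear-algebraic characterization of minimal nonsimplices, and then use that characterization to reduce both counting problems (b) and (c) to enumerating ordered tuples of a specific form.

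For (a), I would invoke \Cref{GCD}(a): simplices of $\mathcal{X}^n$ are exactly the linearly independent subsets of $\F_p^n$. Hence $A$ is a minimal $j$-nonsimplex iff $A$ is linearly dependent while every proper subset of $A$ is linearly independent. Any dependence relation $\sum_{t=0}^j c_t v_t = 0$ among the elements of $A$ must have \emph{every} $c_t \neq 0$: otherwise the support $\{v_t : c_t \neq 0\}$ would be a strictly smaller dependent subset of $A$, violating minimality. Solving such a relation for $v_j$ gives $v_j = \sum_{t<j} a_t v_t$ with all $a_t = -c_t/c_j \neq 0$, and $\{v_0,\ldots,v_{j-1}\}$ is then a basis of $\spn A$ and hence a simplex. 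Conversely, given such a representation, $A$ is dependent (not a simplex); any proper subset $A \setminus \{v_s\}$ with $s < j$ is independent by uniqueness of coordinates of $v_j$ in the basis $\{v_0,\ldots,v_{j-1}\}$ (where the coefficient $a_s$ is nonzero), and $A \setminus \{v_j\}$ is independent by hypothesis.

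For (b), I would count ordered $(j{+}1)$-tuples $(v_0,\ldots,v_{j-1},v_j)$ satisfying the conditions in (a). The number of ordered linearly independent $j$-tuples $(v_0,\ldots,v_{j-1})$ in $\F_p^n$ is the standard $(p^n-1)(p^n-p)\cdots(p^n-p^{j-1})$, and for each such tuple the number of admissible $v_j = \sum a_t v_t$ is $(p-1)^j$, since independence of $\{v_0,\ldots,v_{j-1}\}$ makes $(a_0,\ldots,a_{j-1}) \mapsto \sum a_t v_t$ injective on $(\F_p \setminus \{0\})^j$. For $j \geq 2$ the presence of at least two nonzero coefficients forces $v_j \neq v_t$ for every $t < j$ (otherwise the resulting relation would force all but one $a_t$ to vanish), so each ordered tuple genuinely yields a $(j{+}1)$-element set. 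The ``all coefficients nonzero'' feature of the dependence relation from (a) implies that every one of the $(j{+}1)!$ orderings of a minimal nonsimplex $A$ satisfies the conditions in (a); dividing the ordered count by $(j{+}1)!$ yields the stated formula.

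For (c), the same count of ordered pairs $(v_0,v_1)$ with $v_1 = a_0 v_0$, $a_0 \in \F_p \setminus \{0\}$, gives $(p^n-1)(p-1)$, but this now includes the degenerate pairs with $a_0 = 1$, where $v_0 = v_1$ and $\{v_0,v_1\}$ is not a two-element vertex set. Subtracting the $p^n - 1$ such degenerate ordered pairs and dividing by $2$ gives $\frac{(p^n-1)(p-2)}{2}$. The only delicate point in the whole argument is establishing the $(j{+}1)!$-to-$1$ correspondence in (b), which rests on the ``full support'' property of a minimal dependence relation; this property happens to admit a degenerate one-term solution when $j = 1$, which is precisely the reason (c) needs a separate correction. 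The remainder is elementary linear algebra over $\F_p$ and routine enumeration.
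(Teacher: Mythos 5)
Your proposal is correct and follows essentially the same route as the paper's proof: part (a) is the same circuit characterization of minimal nonsimplices via the full-support property of a minimal dependence relation, and part (b) is the same count, differing only cosmetically in that you enumerate fully ordered $(j{+}1)$-tuples and divide by $(j+1)!$ while the paper enumerates pairs (unordered $(j-1)$-simplex, admissible $v_j$) and divides by $j+1$; the two differ by the $j!$ orderings of the base simplex and yield the same expression. The one genuine (small) deviation is in (c): the paper observes that a minimal $1$-nonsimplex is just a nonedge and computes $\binom{f_0(\mathcal X^n)}{2}-f_1(\mathcal X^n)$, whereas you carry the ordered-pair count from (b) into $j=1$ and subtract the $p^n-1$ degenerate pairs where $a_0=1$ before dividing by $2$; both simplify to $\frac{(p^n-1)(p-2)}{2}$, and your route has the mild advantage of explaining structurally why $j=1$ needs separate treatment.
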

\begin{proof}
If $A$ is a minimal nonsimplex, then by definition, $\{v_0,\ldots ,v_{j-1}\}$ is a simplex in $\mathcal{X}^n$ and $v_j$ is a linear combination of other vertices or otherwise $A$ would be a simplex in $\mathcal{X}^n$. Thus, $v_j=\sum_{t=0}^{j-1}  a_tv_t$, for some $a_t\in \F_p$. If, for example, $a_i=0$ then $\{v_0,\ldots ,v_{i-1},v_{i+1},\ldots ,v_j\}$ is not a simplex in $\mathcal{X}^n$ which contradicts the fact that $A$ is a minimal nonsimplex.

On the other hand, since $v_j=\sum_{t=0}^{j-1}  a_tv_t$ we get that $A$ is not a simplex in $\mathcal{X}^n$. We have given that $\{v_0,\ldots,v_{j-1}\}$ is a simplex in $\mathcal{X}^n$ and since all $a_t\neq 0$ we conclude that all other faces of $A$ are simplices in $\mathcal{X}^n$.

For part (b), we have that the number of minimal $j$-nonsimplices is equal to the number of choices of a $(j-1)$-simplices multiplied by the number of choices for coefficients $a_t$. We need to divide it by the number of faces of $A$. So we get that the required number is equal to
\[
f_{j-1}(X)\cdot \frac{(p-1)^{j}}{j+1}
\]
and we use Lemma~\ref{month1} to calculate it.

For part (c), note that the number of minimal 1-nonsimplices is equal to $\binom{f_0(\mathcal{X}^n)}{2} - f_1(\mathcal{X}^n)$ and use Lemma~\ref{month1}.
\end{proof}

\subsection{Other properties}

Matroids are combinatorial objects which generalize the notion of linear independence in vector spaces.
\begin{defn}
A finite simplicial complex $M$ is called \emph{matroid} if for any two simplices $\sigma$, $\tau\in M$ such that $\sigma$ has more elements than $\tau$, there is $v\in \sigma\setminus \tau$ such that $\tau\cup \{v\}\in M$.
\end{defn}

This defining property is called the independent set exchange property. Simplices of a matroid are called independent sets.
%Finite matroids are shellable complexes and have the homotopy type of a wedge of spheres, see~\cite{MR1165544} and~\cite[Theorem~1.3]{MR0744856}. For more information on matroid theory, we refer readers to~\cite{MR1207587} and~\cite{MR1226888}.

\begin{prop}
Simplicial complexes $X(\F_p^n)$ and $K(\F_p^n)$ are matroids.
\end{prop}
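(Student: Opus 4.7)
The plan is to reduce both statements to the classical Steinitz exchange lemma for vector spaces over the field $\F_p$. By \Cref{GCD}(a), a set $\{v_0, \ldots, v_k\}$ is a simplex in $X(\F_p^n)$ if and only if the vectors $v_0, \ldots, v_k$ are linearly independent in $\F_p^n$. Thus the simplices of $X(\F_p^n)$ are exactly the linearly independent subsets of $\F_p^n \setminus \{0\}$. Since these are the independent sets of the standard vector matroid, the independent set exchange property for $X(\F_p^n)$ is precisely the content of Steinitz exchange: given linearly independent $\sigma = \{v_0, \ldots, v_k\}$ and $\tau = \{u_0, \ldots, u_\ell\}$ with $k > \ell$, the subspace $\operatorname{span}(\tau)$ has dimension $\ell + 1 < k+1 = \dim \operatorname{span}(\sigma)$, so not every $v_i$ can lie in $\operatorname{span}(\tau)$; choosing $v \in \sigma$ with $v \notin \operatorname{span}(\tau)$ gives $v \in \sigma \setminus \tau$ (since $\tau \subset \operatorname{span}(\tau)$) such that $\tau \cup \{v\}$ is linearly independent, hence a simplex in $X(\F_p^n)$.

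For $K(\F_p^n)$ I would transfer the argument through the map $\xi$ of \Cref{nondeg}. Let $\sigma = \{l_0, \ldots, l_k\}$ and $\tau = \{m_0, \ldots, m_\ell\}$ be simplices of $K(\F_p^n)$ with $k > \ell$. By \Cref{GCD}(c), the sets $\xi(\sigma) = \{\xi(l_0), \ldots, \xi(l_k)\}$ and $\xi(\tau) = \{\xi(m_0), \ldots, \xi(m_\ell)\}$ are simplices of $X(\F_p^n)$, and because distinct lines produce distinct generators, $|\xi(\sigma)| = k+1 > \ell+1 = |\xi(\tau)|$. Applying the matroid property already established for $X(\F_p^n)$ yields some $v \in \xi(\sigma) \setminus \xi(\tau)$ with $\xi(\tau) \cup \{v\}$ a simplex of $X(\F_p^n)$. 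Writing $v = \xi(l)$ for the unique $l \in \sigma$, we have $l \notin \tau$ (else $v = \xi(l) \in \xi(\tau)$), and \Cref{GCD}(c) converts $\xi(\tau) \cup \{\xi(l)\}$ being a simplex of $X(\F_p^n)$ back to $\tau \cup \{l\}$ being a simplex of $K(\F_p^n)$.

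There is no real obstacle here: once \Cref{GCD} is in hand, the matroid axiom is just the Steinitz exchange lemma restated, and the only small bookkeeping point is that $\xi$ is injective on lines so that cardinalities and set-differences behave correctly when transporting the exchange from $X(\F_p^n)$ to $K(\F_p^n)$.
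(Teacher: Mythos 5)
Your proof is correct and takes essentially the same route as the paper: for $X(\F_p^n)$ both use the Steinitz exchange argument on dimensions of spans, and for $K(\F_p^n)$ the paper merely says it follows ``analogously'' while you make this explicit by transporting through the injective map $\xi$ of Corollary~\ref{nondeg}. No gaps.
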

\begin{proof}
Let $\sigma=\{v_{i_1}, \dots, v_{i_l}\}$ and  $\tau=\{v_{j_1}, \dots, v_{j_m}\}$ be two simplices in $X(\F_p^n)$  such that $l>m$. As the dimension of a simplex in $X(\F_p^n)$ is one less than the dimension of the span of its vertices, there exists $v\in \sigma$ not lying in the span of the vertices in $\tau$. By definition of $X(\F_p^n)$, we have that for such  $v$, the set $\tau \cup \{v\}$ is a simplex in $X(\F_p^n)$. Analogously, it follows readily that $K(\F_p^n)$  also satisfies the independent set exchange property.
 \end{proof}

\begin{cor}
Simplicial complexes $X(\F_p^n)$ and $K(\F_p^n)$ have the homotopy type of a wedge of $(n-1)$-dimensional spheres.
\end{cor}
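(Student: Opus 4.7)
The plan is to invoke two classical facts about matroid complexes. First, every matroid complex is pure shellable: one linearly orders the ground set and shells the bases in lexicographic order (this is the Provan--Billera/Bj\"orner theorem on shellability of matroids). Second, a pure shellable $(d-1)$-dimensional simplicial complex has the homotopy type of a wedge of $(d-1)$-spheres, since each shelling step attaches a top-dimensional cell along a subcomplex that is either its entire boundary (adding an $(d-1)$-sphere to the wedge) or contractible (not changing the homotopy type).

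First I would verify that both $X(\F_p^n)$ and $K(\F_p^n)$ are pure of dimension $n-1$: by \Cref{GCD}(a), a facet of $X(\F_p^n)$ is an (unordered) basis of $\F_p^n$, and by \Cref{GCD}(c) a facet of $K(\F_p^n)$ is a set of $n$ lines whose chosen generators form a basis; in either case every facet has cardinality $n$. Combined with the matroid property established in the previous proposition, both complexes are pure $(n-1)$-dimensional matroid complexes. Applying the shellability theorem for matroids and then the homotopy consequence of pure shellability yields the claim.

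The main obstacle is simply citation/verification that matroid complexes are shellable, which is classical; the rest of the deduction is immediate. An alternative route, avoiding an explicit appeal to shellability here, is to invoke Bj\"orner's result that matroid complexes are Cohen--Macaulay (in fact vertex-decomposable) and homotopy equivalent to a wedge of spheres of the top dimension. Either way, once matroid-ness and purity are in hand, the wedge-of-$(n-1)$-spheres conclusion for both $X(\F_p^n)$ and $K(\F_p^n)$ drops out of a general theorem rather than from a complex-specific computation.
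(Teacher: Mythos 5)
Your proposal takes essentially the same approach as the paper: both deduce the wedge-of-spheres conclusion by combining the matroid property (established in the preceding proposition) with the classical facts that matroid complexes are pure shellable and that pure shellable complexes have the homotopy type of a wedge of top-dimensional spheres. The paper cites Bj\"orner's results for these facts exactly as you propose; your explicit verification of purity via \Cref{GCD} is a harmless addition but not a departure from the paper's argument.
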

\begin{proof}

Bj\" orner proved~\cite[Theorem~1.3]{MR0744856} that a pure $n$-dimensional shellable complex $\mathcal{K}$ of at most countable cardinality has the homotopy type of a wedge of $h$ spheres $S^n$, where $h$ is the characteristic of shelling. Finite matroids are shellable complexes and have the homotopy type of a wedge of spheres, see~\cite{MR1165544} and~\cite[Theorem~1.3]{MR0744856}. For more information on matroid theory, we refer readers to~\cite{MR1207587} and~\cite{MR1226888}.
\end{proof}

To determine the exact number of spheres in the wedge we use the $f$-vectors of  $X(\F_p^n)$ and $K(\F_p^n)$.

\begin{prop}

\label{mainKp}
\begin{enumerate}[leftmargin = 8mm]
\item [(a)]
Let $n\geq 1$ be an integer. The simplicial complex $X (\mathbb{F}_p^n)$ is homotopy equivalent to the wedge of $A_n(p)$ spheres
$S^{n-1}$, where
\[
A_n(p) =(-1)^n + \sum_{i=0}^{n-1}(-1)^{n-1-i}\frac{(p^n-p^{i})\cdots (p^n - p^0)}{(i+1)!}.
\]

\item [(b)]  Let $n > 1$ be an integer. The simplicial complex $K(\mathbb{F}^n_p)$ is homotopy equivalent to the wedge of $B_n(p)$ spheres $S^{n-1}$,
where
\[
B_n(p) = (-1)^n+\sum_{i=0}^{n-1} (-1)^{n-1-i}\frac{(p^n-p^{i}) \cdots (p^n-p^0)}{(p-1)^{i+1} \cdot (i+1)!}.
\]
\end{enumerate}

\end{prop}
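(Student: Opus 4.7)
The plan is to combine the previous corollary with a simple Euler characteristic calculation, since once we know a CW complex has the homotopy type of a wedge of $(n-1)$-spheres, the number of spheres is determined by $\chi$.

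More precisely, if a space $Y$ is homotopy equivalent to $\bigvee_{k} S^{n-1}$, then $\chi(Y) = 1+(-1)^{n-1}k$, so $k = (-1)^{n-1}\bigl(\chi(Y)-1\bigr)$. Applying this to $X(\F_p^n)$ with the formula \eqref{euler} for the Euler characteristic in terms of the $f$-vector gives
\[
A_n(p) \;=\; (-1)^{n-1}\bigl(\chi(X(\F_p^n))-1\bigr) \;=\; (-1)^n + \sum_{i=0}^{n-1}(-1)^{n-1-i}\, f_i(X(\F_p^n)),
\]
and substituting the closed form of $f_i(X(\F_p^n))$ from \Cref{month1} reproduces the stated expression. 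The identical argument with $f_i(K(\F_p^n))$ from \Cref{month2} yields $B_n(p)$; here the hypothesis $n>1$ ensures $K(\F_p^n)$ is actually a nondegenerate wedge (for $n=1$ it is a single point).

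So the proof is essentially a three-line verification: quote the corollary to get the homotopy type, compute $\chi$ by summing $(-1)^i f_i$, and solve $\chi=1+(-1)^{n-1}k$ for $k$. The only potential pitfall is bookkeeping of signs between the two conventions $\sum(-1)^i f_i$ and $\sum(-1)^{n-1-i}f_i$, but multiplying through by $(-1)^{n-1}$ and noting that $(-1)^{n-1+i} = (-1)^{n-1-i}$ makes this routine. No deeper step is required, since the hard work (shellability implies wedge of spheres, and the $f$-vector computation) has already been carried out in the preceding lemma and corollary.
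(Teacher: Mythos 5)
Your proposal is correct and matches the paper's proof essentially verbatim: both invoke the preceding corollary (finite matroids, hence shellable, hence a wedge of $(n-1)$-spheres), then compute the number of spheres from the Euler characteristic via $\chi = 1 + (-1)^{n-1}k$ and the closed-form $f$-vectors of Lemma~\ref{month}. The sign bookkeeping you flag is handled identically in the paper.
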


\begin{proof}
Since $X(\mathbb{F}^n_p)$ and $K(\mathbb{F}^n_p)$  are finite matroids, they are homotopy equivalent to a wedge of $(n-1)$-spheres. We determine the number of spheres in the wedge using the Euler characteristic~\eqref{euler},
\[
\chi(K (\mathbb{F}_p^n))= 1+(-1)^{n-1}B_n(p) = f_0(K (\mathbb{F}_p^n))- \ldots +(-1)^{n-1}f_{n-1}(K (\mathbb{F}_p^n))
\]
which implies that
\begin{equation}
\label{euler2}
B_n(p) = (-1)^n+\sum_{i=0}^{n-1}(-1)^{n-1-i}f_i(K (\mathbb{F}_p^n)).
\end{equation}
Finally, using Lemma~\ref{month2} we obtain
\[
B_n(p) = (-1)^n+\sum_{i=0}^{n-1} (-1)^{n-1-i}\frac{(p^n-p^{i}) \cdots (p^n-p^0)}{(p-1)^{i+1} \cdot (i+1)!}.
\]
The value of $A_n (p)$ follows analogously  from $\chi(X (\mathbb{F}_p^n))= 1+(-1)^{n-1}A_n(p)$.
\end{proof}

The homotopy type of $X(\Z^n)$ and $K(\Z^n)$ is also a wedge of $(n-1)$-spheres.

\begin{prop}
The simplicial complexes $X(\Z^n)$ and $K(\Z^n)$ are homotopy equivalent to a countably infinite wedge of $(n-1)$-spheres, for $n\geq 1$.
\end{prop}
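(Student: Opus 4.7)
The plan is to combine Church's connectivity theorem with an elementary counting argument to control both the homotopy type and the number of spheres.

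For the homotopy type (assuming $n\geq 2$; the case $n=1$ reduces to $X(\Z)\cong S^0$ by direct inspection), Proposition~\ref{GCD}(b) identifies $X(\Z^n)$ with the complex of partial bases of $\Z^n$, so \cite[Theorem~E]{Church} gives that $X(\Z^n)$ is Cohen--Macaulay of dimension $n-1$ and, in particular, $(n-2)$-connected. Being also $(n-1)$-dimensional, its top homology $H_{n-1}(X(\Z^n))$ equals the cycle group and is free abelian, so $X(\Z^n)\simeq \bigvee_I S^{n-1}$, with $I$ any basis of $H_{n-1}(X(\Z^n))$. For $K(\Z^n)$, Corollary~\ref{retract} realizes it as a retract of $X(\Z^n)$, so it inherits $(n-2)$-connectivity; being $(n-1)$-dimensional, the same argument identifies it as a wedge of $(n-1)$-spheres.

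Countability of the wedge is automatic: both complexes have countable vertex sets, hence countably generated $H_{n-1}$. For infinitude, I would use the exhaustion $X(\Z^n)=\bigcup_N X_N$ by the finite full subcomplexes $X_N$ on primitive vectors of sup-norm at most $N$. Since $X(\Z^n)$ has no $n$-cells, every $(n-1)$-cycle has finite support, so $H_{n-1}(X(\Z^n))=\varinjlim H_{n-1}(X_N)$ with injective transition maps, and it suffices to show that $\rank H_{n-1}(X_N)\to\infty$. As $X_N$ is finite and $(n-1)$-dimensional, one has the crude bound $\rank H_{n-1}(X_N)\geq f_{n-1}(X_N)-f_{n-2}(X_N)$.

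The main obstacle is the lattice-point asymptotic showing this lower bound is unbounded. For $n=2$, an explicit B\'ezout-based calculation yields $f_1(X_N)-f_0(X_N)\sim CN^2\to\infty$, using that primitive vectors in the box form a set of density $6/\pi^2$ and that each primitive $v$ admits $\sim 2N/\|v\|_\infty$ unimodular partners in the box. For $n\geq 3$, the analogous estimate follows because each partial basis of size $n-1$ inside the box extends to at least a one-parameter family of bases inside the box, forcing $f_{n-1}(X_N)/f_{n-2}(X_N)\to\infty$. The argument for $K(\Z^n)$ is parallel, using the exhaustion by finite full subcomplexes on lines of bounded sup-norm.
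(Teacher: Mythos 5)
Your treatment of the homotopy type coincides with the paper's: invoke \cite[Theorem~E]{Church} for $X(\Z^n)$, then use Corollary~\ref{retract} to transfer $(n-2)$-connectivity to the $(n-1)$-dimensional retract $K(\Z^n)$, and conclude both are wedges of $(n-1)$-spheres since the top homology is free.

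Where you diverge is the infinitude of the wedge, and there the proposal has a real gap. The paper produces explicit infinite families of linearly independent top-dimensional cycles: for $X(\Z^n)$ the sign-symmetrized cycles $c(\sigma)=\sum_{\mu\in\{\pm1\}^n}\mu_1\cdots\mu_n\,\mu(\sigma)$ attached to each top simplex $\sigma$ (pairwise disjointly supported for most choices of $\sigma$), and for $K(\Z^n)$ the ``apartment'' cycle $c=\sum_i(-1)^i\sigma_i$ together with its images under automorphisms of $\Z^n$ chosen to have disjoint supports. This makes infinitude immediate with no estimates. Your route instead exhausts $X(\Z^n)$ by the finite full subcomplexes $X_N$, correctly notes that $H_{n-1}(X(\Z^n))=\varinjlim H_{n-1}(X_N)$ with injective transition maps, and uses the Euler-characteristic bound $\rank H_{n-1}(X_N)\geq f_{n-1}(X_N)-f_{n-2}(X_N)$. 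All of that is sound. But the decisive step --- that $f_{n-1}(X_N)-f_{n-2}(X_N)\to\infty$ --- is only sketched, and it is not a formality. For $n=2$ both $f_1(X_N)$ and $f_0(X_N)$ are $\Theta(N^2)$, so you must compare the two leading constants (density $6/\pi^2$ of primitive vectors versus a weighted sum of $1/\|v\|$ over primitive $v$ in the box, with boundary truncation); the heuristic ``$\sim 2N/\|v\|_\infty$ partners per $v$'' does not by itself settle the sign of the difference. For $n\geq 3$ the claim that ``each partial basis of size $n-1$ extends to a one-parameter family inside the box'' ignores boundary effects (partial bases whose span barely meets the box) and the $n$-fold overcounting of top simplices by their facets, so $f_{n-1}/f_{n-2}\to\infty$ also needs a real argument. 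You flag this yourself as ``the main obstacle,'' and it remains open in the proposal. A simpler fix within your own framework would be to exhibit, for each $N$, a set of $N$ explicit linearly independent $(n-1)$-cycles in $X_N$ --- which is essentially what the paper does globally, and which avoids the lattice-point asymptotics entirely.
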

\begin{proof}
With the notation from \cite{Church}, note that for $\mathcal{O}_S = \Z $ we have that $\mathcal{B}_n(\mathcal{O}_S) = X(\Z^n)$ and applying Theorem E from \cite{Church} we get that $X(\Z^n) $, being Cohen-Macaulay in the sense of the  definition from \cite{Church}, is homotopy equivalent to a wedge of $(n-1)$-spheres.

It remains to prove that there are infinitely many spheres in the wedge.
For a top simplex  $\sigma=\{u_1,u_2,\ldots ,u_n\}$ we fix order of its vertices and for $\mu=(\mu_1,\ldots ,\mu_n)\in \{-1,+1\}^n$ we define
\[
\mu (\sigma):= \{\mu_1(u_1),\ldots , \mu_n(u_n)\}.
\]

For each $(n-1)$-simplex $\sigma \in X(\Z^n)$, let
\[
c(\sigma):=\sum_{\mu \in \{-1,\, 1\}^{n}} \mu_1\cdots \mu_n \cdot \mu (\sigma).
\]
One may prove by induction on $n$ that $c(\sigma)$ is a cycle, and because it is a top-dimensional cycle, it represents a nontrivial homology class. Note that $c(\sigma)$'s are  linearly independent with the only relation $c(\sigma )= c(\mu (\sigma ))$.
Since $X(\Z^n)$ has countably infinitely many top simplices, there are countably infinitely many independent generators of $H_{n-1}(X(\Z^n);\Z)$  proving that $X(\Z^n)$ is homotopy equivalent to a countably infinite wedge of $(n-1)$-spheres.

By Corollary~\ref{retract} $K(\Z^n)$ is a retract of $X(\Z^n)$ and therefore, it is an $(n-1)$-dimensional, $(n-2)$-connected simplicial complex and $H_{n-1}(K(\Z^n);\Z)$ is free abelian group. We conclude that $K(\Z^n)$ is a wedge of $(n-1)$-spheres.

Denote by $L_i, i=1,\ldots n$, the lines through $e_i$ and by $L_0$ the line through $(1,1,\ldots ,1)$. Then $\sigma_i := L_0L_1\ldots \hat{L_i}\ldots L_n, i=0,1,\ldots ,n$ is a  $(n-1)$-simplex in $K(\Z^n)$ and
\begin{equation}
\label{top cycle}
c:= \sum_{i=0}^n (-1)^{i}\sigma_i
\end{equation}
is a an $(n-1)$-cycle. Because $c$ is a nonzero top-dimensional cycle, it gives a nonzero homology class. For every automorphism $\xi$ of $\Z^n$ we have that $[\xi (c)]\neq 0$ in $H_{n-1}(K(\Z^n);\Z)$. We pick $\xi_i, i\in \N$ so that simplices $\xi_i(c)$ are disjoint and we get that their homology classes are independent. Therefore, $H_{n-1}(K(\Z^n);\Z)$ has infinitely many generators and the proposition is proved.
\end{proof}

Using the same methods, we get that all links of $k$-simplices from $K(\Z^n)$ and $X(\Z^n)$ are homotopy equivalent to a countably infinite wedge of $(n-k-2)$-spheres. We also conclude that $K(\Z^n)$ and $X(\Z^n)$ are Cohen-Macaulay in the sense of the definition from \cite{Church}.

\section{Tor-algebra of the universal complexes}

In this section, we apply algebraic Morse theory to calculate the Tor-algebra of the universal complexes. Before that, we briefly review definitions and main results on the moment angle complex, its cohomology, and the algebraic Morse theory principles. For a  more substantial introduction to the moment angle complexes and the Tor-algebra, we refer the reader to \cite{BPbook}, and for more about the algebraic Morse theory, we refer the reader to \cite{Skol06}.

\subsection{Moment angle complex \texorpdfstring{$\mathcal{Z}_\mathcal{K}$}{ZK}}
Let $\mathcal{K}$ be an $n$-dimensional simplicial complex with the set of vertices $[m]$. Let
us denote by $(\underline{X}, \underline{A})=\{(X_i,
A_i)\}^m_{i=1}$ a collection of topological pairs of CW-complexes.
The polyhedral $\mathcal{K}$ product is a topological space $\mathcal{Z}_\mathcal{K}
(\underline{X}, \underline{A})=\bigcup_{\sigma \in \mathcal{K}} D(\sigma)$
where
$$D(\sigma)=\prod_{i=1}^m Y_i, \qquad\mbox{and}\qquad
Y_i=
\begin{cases}
X_i &\text{if } i\in\sigma,\\
A_i &\text{if } i\not\in\sigma.
\end{cases}
$$
By definition we have $D(\emptyset)=A_1\times \dots\times A_m$.

In the case $(\underline{X}, \underline{A})=\{(D^2,
S^1)\}^m_{i=1}$, $\mathcal{Z}_\mathcal{K} (\underline{X}, \underline{A})$ is denoted by $\mathcal{Z}_\mathcal{K}$ and it is called \textit{the moment angle complex} of $\mathcal{K}$ and in the case $(\underline{X}, \underline{A})=\{(D^1, S^0)\}^m_{i=1}$, $\mathcal{Z}_\mathcal{K} (\underline{X}, \underline{A})$ is denoted by $_{\mathbb{R}} \mathcal{Z}_\mathcal{K}$ and it is called \textit{the real moment angle complex} of $\mathcal{K}$.

They are of special interest in toric topology because they are important in the understanding of combinatorics of $\mathcal{K}$. If $\mathcal{K}$ is a triangulation of a sphere, $\mathcal{Z}_\mathcal{K}$ is a topological $(m+n)$-dimensional manifold.

Let $\k [v_1, \dots, v_m]$ be the polynomial algebra with grading $\deg v_i=2$. Let $\Lambda [u_1, \dots, u_m]$ be the exterior algebra with grading $\deg u_i=1$. Given a subset $I=\{i_1, \dots, i_k\}\subset [m]$ let $v_I$ denote the square-free monomial $v_{i_1}\cdots v_{i_k}$ in $\k [v_1, \dots, v_m]$.
The Stanley-Reisner ring of the complex $\mathcal K$ is the quotient ring $\k[\mathcal{K}]=\k[v_1,\ldots,v_m]/{\mathcal{I}_\mathcal{K}}$, where
$\mathcal{I}_\mathcal{K}=(v_I:I\not\in\mathcal{K})$.
The quotient algebra is
\[
R^*(\mathcal{K})=\Lambda[u_1,\ldots,u_m]\otimes \k[\mathcal{K}]/(v_i^2=u_iv_i=0,1\le i\le m)
\]
where the differential is defined by
$$
\partial(u_{\{a_1,\ldots, a_i\}} v_B)=\sum_{\substack{{1\le k\le i}\\ {\{a_k\}\cup B\text{ is a simplex}}}} (-1)^k u_{\{a_1,\ldots, a_{k-1},a_{k+1},\ldots a_i\}} v_{\{a_k\}\cup B}
$$
and has bigratuation where the degree of $u_Av_B\in R^*(\mathcal{K})$ is $\deg(u_Av_B)=(-|A|,2(|A|+|B|))$; i.e.,
$\deg u_i=(-1,2)$ and $\deg v_i=(0,2)$. Its homology is called Tor-module of $\mathcal K$ and denoted by $\Tor_{\k[v_1,\ldots,v_m]}(\k[\mathcal{K}],\k)$.

The cohomology ring of $\mathcal{Z}_K$ over $\k$ was obtained by Buchstaber, Panov, and Baskakov.

\begin{thm}\label{zkcoh} There are isomorphisms, functorial in $\mathcal{K}$, of bigraded algebras \begin{equation*}H^{\ast, \ast} \left (\mathcal{Z}_\mathcal{K}; \k \right)\cong
\Tor_{\k [v_1, \dots, v_m]} (\k[\mathcal{K}],
\k)\cong H^{\ast, \ast}[\Lambda [u_1, \dots, u_m]\otimes
\k[\mathcal{K}], d],\end{equation*} where the bigrading and
the differential on the right hand side are defined by $$\deg\,u_i = (-1, 2), \deg\,v_i = (0, 2), d u_i=v_i, d v_i=0.$$
\end{thm}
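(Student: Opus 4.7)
The plan is to prove the two isomorphisms separately, the second being purely algebraic and the first topological, then to identify the ring structures.

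For the algebraic isomorphism $\Tor_{\k[v_1,\ldots,v_m]}(\k[\mathcal{K}],\k)\cong H^{*,*}(\Lambda[u_1,\ldots,u_m]\otimes \k[\mathcal{K}], d)$, the starting point is the Koszul complex $E = \Lambda[u_1,\ldots,u_m]\otimes \k[v_1,\ldots,v_m]$ with differential $du_i = v_i$, $dv_i = 0$ and bigrading $\deg u_i=(-1,2)$, $\deg v_i=(0,2)$. A standard computation shows that $E$ is a free bigraded resolution of $\k$ over $\k[v_1,\ldots,v_m]$. Tensoring this resolution over $\k[v_1,\ldots,v_m]$ with $\k[\mathcal{K}]=\k[v_1,\ldots,v_m]/\mathcal{I}_\mathcal{K}$ gives precisely the complex $\Lambda[u_1,\ldots,u_m]\otimes \k[\mathcal{K}]$ with the induced differential, and the relations $v_i^2=u_iv_i=0$ appear as the natural consequence of working on chains $u_Av_B$ with $A\cap B=\emptyset$, after passing to the quotient by $\mathcal{I}_\mathcal{K}$. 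The homology of this complex is by definition the Tor-module, and the differential displayed in the statement is exactly the one induced from $du_i=v_i$.

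For the topological isomorphism $H^{*,*}(\mathcal{Z}_\mathcal{K};\k)\cong \Tor_{\k[v_1,\ldots,v_m]}(\k[\mathcal{K}],\k)$, the classical route is via the Borel construction. The Davis--Januszkiewicz space $DJ(\mathcal{K}) = ET^m\times_{T^m}\mathcal{Z}_\mathcal{K}$ is homotopy equivalent to the polyhedral product $(\mathbb{C}P^\infty,*)^{\mathcal{K}}$, and one shows directly from this polyhedral description that $H^*(DJ(\mathcal{K});\k)\cong \k[\mathcal{K}]$, with the projection $DJ(\mathcal{K})\to BT^m=(\mathbb{C}P^\infty)^m$ inducing the quotient $\k[v_1,\ldots,v_m]\twoheadrightarrow \k[\mathcal{K}]$. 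The Borel fibration $\mathcal{Z}_\mathcal{K}\hookrightarrow DJ(\mathcal{K})\to BT^m$ then has an Eilenberg--Moore spectral sequence with $E_2^{*,*}=\Tor_{H^*(BT^m)}(H^*(DJ(\mathcal{K})),\k)$, converging to $H^*(\mathcal{Z}_\mathcal{K};\k)$. By the first isomorphism the $E_2$-term is computed by the Koszul-type complex above, which is concentrated on the diagonal in a way that forces collapse at $E_2$, yielding the desired additive isomorphism. Functoriality in $\mathcal{K}$ is immediate from the functoriality of the polyhedral product construction, of the Borel fibration, and of the Koszul resolution.

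The main obstacle is promoting this to an isomorphism of bigraded \emph{algebras}: the Eilenberg--Moore spectral sequence is multiplicative, but matching its product with the cup product on $H^*(\mathcal{Z}_\mathcal{K})$ requires an extra argument. The approach I would take is Baskakov's, which bypasses the spectral sequence for multiplicativity: construct an explicit cell decomposition of $\mathcal{Z}_\mathcal{K}$ whose cells are indexed by pairs $(A,B)$ with $A\cap B=\emptyset$ and $B\in\mathcal{K}$, where the cell $e(A,B)$ has dimension $|A|+2|B|$ and corresponds to the monomial $u_Av_B$. A direct computation of the cellular cochain differential reproduces the formula for $\partial(u_Av_B)$ in the statement, and the cup product on cellular cochains, coming from the diagonal map of $\mathcal{Z}_\mathcal{K}$, coincides on such monomials with multiplication in $\Lambda[u_1,\ldots,u_m]\otimes \k[\mathcal{K}]$ modulo $v_i^2=u_iv_i=0$. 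This gives the multiplicative isomorphism directly, and composing with the algebraic isomorphism established first completes the proof.
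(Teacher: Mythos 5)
The paper does not prove this theorem; it is quoted as a known result and attributed to Buchstaber, Panov, and Baskakov (see \cite{BPbook} and \cite{Baskakov02}). There is therefore no in-paper proof to compare against, so the relevant question is simply whether your sketch reconstructs the standard argument correctly.

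It does, and it follows the canonical route: a Koszul resolution of $\k$ over $\k[v_1,\dots,v_m]$ to identify the Tor-module with the cohomology of $\Lambda[u_1,\dots,u_m]\otimes\k[\mathcal{K}]$; the identification $H^*(DJ(\mathcal{K});\k)\cong\k[\mathcal{K}]$ via the polyhedral-product model $(\mathbb{C}P^\infty,*)^{\mathcal{K}}$; the Eilenberg--Moore spectral sequence for the Borel fibration; and Baskakov's cellular cochain model to settle the multiplicative structure. Two small remarks. First, your parenthetical about ``the relations $v_i^2=u_iv_i=0$'' is out of place here: the theorem as stated concerns the full complex $\Lambda[u_1,\dots,u_m]\otimes\k[\mathcal{K}]$, and the smaller quotient $R^*(\mathcal{K})$ is a separate (quasi-isomorphic) model which the paper introduces independently. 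Second, ``concentrated on the diagonal in a way that forces collapse'' is terse; the precise point is that the whole setup carries a $\Z^m$-multigrading refining the bigrading, the Eilenberg--Moore differentials preserve multidegree, and in each fixed multidegree the $E_2$-term sits in a single total degree, so all higher differentials vanish. If you spell that out, the spectral-sequence step is airtight; alternatively, your third paragraph via Baskakov's explicit cell decomposition of $\mathcal{Z}_\mathcal{K}$ bypasses the spectral sequence entirely and delivers both the additive and the ring isomorphism at once, which is the cleaner path.
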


Additive structure of $H^*(\mathcal{Z}_\mathcal{K};\k)$ can thus be obtained using a well-known result from combinatorial commutative algebra, the Hochster's formula, which represents the above Tor-algebra as a direct sum of reduced simplicial cohomology groups of all full subcomplexes in $\mathcal{K}$. Recall that for $J\subset [m]$ the simplicial complex $\mathcal{K}_J$ is the full subcomplex of $K$ on the vertex set $J$. Multiplication in $H^*(\mathcal{Z}_\mathcal{K};\k)$ was firstly described by Baskakov in \cite[Theorem~1]{Baskakov02}.

\begin{thm}\label{hoch}
There are isomorphisms of
$\k$-modules
\begin{align*}
\Tor^{-i,2j}_{\k[v_{1},\ldots,v_{m}]}(\k[\mathcal{K}];\k) &\cong  \bigoplus_{J\subset [m], |J|=j}
\widetilde{H}^{|J|-i-1} (\mathcal{K}_J;\k),\\
H^{l}\left (\mathcal{Z}_\mathcal{K}; \k \right ) &\cong  \bigoplus_{J\subset [m]}
\widetilde{H}^{l-|J|-1} (\mathcal{K}_J;\k).
\end{align*}
These isomorphisms sum up into a ring isomorphism \begin{align*}
H^{\ast}\left (\mathcal{Z}_\mathcal{K}; \k \right)\cong \bigoplus_{J\subset [m]}
\widetilde{H}^{\ast} (\mathcal{K}_J;\k),
\end{align*}
where the ring structure on the
right hand side is given by the canonical maps $$H^{k-|I|-1}
(\mathcal{K}_I;\k)\otimes H^{l-|J|-1} (\mathcal{K}_I;\k)\rightarrow H^{k+l-|I|-|J|-1}
(\mathcal{K}_{I\cup J};\k)$$ which are induced by simplicial maps $\mathcal{K}_{I\cup
J}\rightarrow \mathcal{K}_I \ast \mathcal{K}_J$ for $I\cap J=\emptyset$ and zero
otherwise.
\end{thm}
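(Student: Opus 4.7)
The plan is to deduce the theorem directly from Theorem~\ref{zkcoh} by exploiting the natural $\{0,1\}^m$-multigrading on $R^\ast(\mathcal{K})$. Since $v_i^2 = u_iv_i = 0$, every nonzero monomial has the form $u_A v_B$ with $A,B \subset [m]$ disjoint, and $B$ a simplex in $\mathcal{K}$. Assign to such a monomial the multidegree $\chi_{A \sqcup B} \in \{0,1\}^m$. The differential $d u_i = v_i$ preserves this multidegree, since both $u_i$ and $v_i$ have multidegree $e_i$. Hence
\[
R^\ast(\mathcal{K}) = \bigoplus_{J \subset [m]} R^\ast(\mathcal{K})_J
\]
as a direct sum of subcomplexes, where $R^\ast(\mathcal{K})_J$ is spanned by those $u_A v_B$ with $A \sqcup B = J$.

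Next I would identify each summand with a shifted simplicial cochain complex. A basis of $R^\ast(\mathcal{K})_J$ is in bijection with the simplices $B$ of the full subcomplex $\mathcal{K}_J$ (including $B = \varnothing$), via $u_{J \setminus B} v_B \leftrightarrow B$. Fixing an ordering on $[m]$, I would define
\[
\varphi_J \colon R^\ast(\mathcal{K})_J \longrightarrow \widetilde{C}^{\ast-1}(\mathcal{K}_J;\k), \qquad u_{J\setminus B} v_B \longmapsto \pm \, B^\ast,
\]
with an explicit sign depending on the relative order of the elements of $J \setminus B$ inside $J$, and verify that $\varphi_J \circ d$ agrees with the reduced simplicial coboundary, where the Koszul differential
\[
d(u_{J\setminus B} v_B) = \sum_{i \in J \setminus B} \pm\, u_{J \setminus (B \cup\{i\})} v_{B \cup \{i\}}
\]
(nonzero only when $B \cup \{i\}$ is a simplex) corresponds precisely to the coboundary of $B^\ast$ in $\mathcal{K}_J$. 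Tracking the bidegree, a monomial with $|A| = i$ and $|A|+|B| = j$ has $|J| = j$ and represents a simplex of dimension $|B|-1 = j-i-1$; taking homology yields $\widetilde{H}^{j-i-1}(\mathcal{K}_J;\k)$, which after summing over all $J$ with $|J|=j$ produces both additive statements.

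For the ring structure I would examine the product $(u_{J\setminus B}v_B)\cdot(u_{J'\setminus B'}v_{B'})$ in $R^\ast(\mathcal{K})$. The relations $u_i^2 = v_i^2 = u_iv_i = 0$ force this product to vanish unless $J \cap J' = \varnothing$, in which case it equals $\pm\, u_{(J\cup J')\setminus(B\cup B')} v_{B \cup B'}$ provided $B \cup B'$ is a simplex of $\mathcal{K}$. Under the identification $\varphi_J \otimes \varphi_{J'}$, this matches the composition $\widetilde{C}^\ast(\mathcal{K}_J) \otimes \widetilde{C}^\ast(\mathcal{K}_{J'}) \to \widetilde{C}^\ast(\mathcal{K}_J \ast \mathcal{K}_{J'}) \to \widetilde{C}^\ast(\mathcal{K}_{J \cup J'})$ induced by the canonical simplicial inclusion $\mathcal{K}_{J \cup J'} \hookrightarrow \mathcal{K}_J \ast \mathcal{K}_{J'}$, because a simplex of $\mathcal{K}_{J \cup J'}$ decomposes uniquely as a disjoint union of its restrictions to $J$ and $J'$.

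The main obstacle is bookkeeping of signs. The Koszul-type differential on $R^\ast(\mathcal{K})$, the orientation sign in the simplicial coboundary on $\mathcal{K}_J$, and the shuffle signs in the join product all depend on fixed orderings, and making $\varphi_J$ compatible with all three simultaneously requires a careful choice of sign convention (typically $\varphi_J(u_{J\setminus B} v_B) = \varepsilon(J \setminus B, B)\cdot B^\ast$ for an appropriate shuffle sign $\varepsilon$). Once the signs are pinned down, the additive isomorphisms and the multiplicative compatibility fall out of the decomposition above.
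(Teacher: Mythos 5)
The paper does not prove Theorem~\ref{hoch}: the two additive isomorphisms are Hochster's formula, which the surrounding text explicitly calls a ``well-known result from combinatorial commutative algebra,'' and the multiplicative description is Baskakov's theorem, cited from \cite{Baskakov02}. So there is no in-paper proof to compare against; the relevant comparison is with the standard textbook argument, which your sketch reproduces faithfully. The $\{0,1\}^m$-multigrading of $R^\ast(\mathcal{K})$ by $A\sqcup B$, the identification of the graded piece $R^\ast(\mathcal{K})_J$ with the degree-shifted augmented simplicial cochain complex of $\mathcal{K}_J$ (with $u_J$ mapping to the dual of the empty simplex in degree $-1$), the degree bookkeeping ($|B|-1=j-i-1$ in bidegree $(-i,2j)$, hence $l-|J|-1$ in total degree $l=2j-i$), and the observation that the Koszul product of two multigraded classes vanishes unless $J\cap J'=\varnothing$ and $B\cup B'\in\mathcal{K}$ --- the latter condition being exactly where the Stanley--Reisner ideal enters and matching the inclusion $\mathcal{K}_{J\cup J'}\hookrightarrow\mathcal{K}_J\ast\mathcal{K}_{J'}$ --- are all correct. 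The one step you defer, fixing the shuffle sign $\varepsilon(J\setminus B,B)$ so that $\varphi_J$ is simultaneously a cochain isomorphism and multiplicative with respect to the cross product of the join, is the only thing actually missing; it is routine, and a consistent convention can be found in the monograph \cite{BPbook}. With that checked, your proposal constitutes a complete proof of the statement.
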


\subsection{Algebraic Morse theory} Suppose we are given a finitely generated cochain complex of free $R$-modules $\mathcal{C}_*=(\mathcal{C}_k,\partial_k)_{k\in\Z}$. Then $\mathcal{C}_k=\oplus _{v\in I_k} \mathcal{C}_{k,v}$ for some finite index set $I_k$, where $\mathcal{C}_{k,v}\cong R$ for all $k\in \Z$ and all $v\in I_k$. For $u\in I_k$ and $v\in I_{k+1}$, let $\partial_{u,v}$ denote the $R$-module morphism $\mathcal{C}_{k,u}\smash{\overset{\iota}{\longrightarrow}} \mathcal{C}_k\smash{\overset{\partial}{\longrightarrow}} \mathcal{C}_{k+1}\smash{\overset{\pi}{\longrightarrow}} \mathcal{C}_{k+1,v}$, where $\iota$ is the inclusion and $\pi$ the projection. The associated \emph{digraph of $\mathcal{C}_\ast$}, denoted by $\Gamma_{\mathcal{C}_\ast}$, is a directed simple graph with vertex set the disjoint union $\bigsqcup_{k\in\Z} I_k$ and edge set $\cup_{k\in\Z}\{(u,v)\mid u\in I_{k-1},j\in I_{k},\partial_{u,v}\neq 0\}$. We denote each edge $(u,v)$ by $u \to v$.

Let $\mathcal{M}$ be a subset of the set of edges of $\Gamma_{\mathcal{C}_*}$. The associated \emph{digraph of $\mathcal{M}$}, denoted $\Gamma_{\mathcal{C}_*}^\mathcal{M}$, is the digraph $\Gamma_{\mathcal{C}_*}$ with all the edges in $\mathcal{M}$ having reversed orientation,
$$
\big(\{\text{vertices of }\Gamma_{\mathcal{C}_*}\},\{u\to v\mid \,u\to v\in\Gamma_{\mathcal{C}_*}\setminus \mathcal{M}\}\cup\{v\to u\mid \,u\to v\in\mathcal{M}\}\big).
$$
We say that $\mathcal{M}$ is a \emph{Morse matching} on $\Gamma_{\mathcal{C}_*}$ when the following conditions hold:
\begin{enumerate}
\item edges in $\mathcal{M}$ have no common endpoints, i.e., whenever $u\to v,u' \to v' \in \mathcal{M}$ then $\{u,v\}\cap\{u',v'\}=\emptyset$,
\item for every edge $u\to v\in\mathcal{M}$, the corresponding map $\partial_{u,v}$ is an isomorphism,
\item $\Gamma_{\mathcal{C}_*}^\mathcal{M}$ contains no directed cycles.
\end{enumerate}
A vertex in $\Gamma_{\mathcal{C}_*}^\mathcal{M}$ that is not incident to an edge in $\mathcal{M}$ is called \emph{$\mathcal{M}$-critical}. We denote $\mathring{\mathcal{M}} =\{\mathcal{M}$-critical vertices of $\Gamma_{\mathcal{C}_*}^\mathcal{M}\}$
and $\mathring{\mathcal{C}}_k=\bigoplus_{v\in I_k\cap \mathring{\mathcal{M}}}\mathcal{C}_{k,v}$.
If $u \in I_{k-1}\cap \mathring{\mathcal{M}}$ and $v\in I_k\cap \mathring{\mathcal{M}}$, then the set of all directed trails in $\Gamma_{\mathcal{C}_*}^\mathcal{M}$ from $u$ to $v$ with vertices in $I_{k-1}\cup I_{k}$ is denoted by $\Gamma_{u,v}^\mathcal{M}$. By $(1)$ and $(3)$, such trails are `zig-zag' and obviously finite paths. If $\gamma = (u=v_1\to\ldots\to v_{2i}=v) \in \Gamma_{u,v}^\mathcal{M}$, then the \emph{gradient path} along $\gamma$ is the signed morphism that maps from $\mathcal{C}_{k-1,u}$ to $\mathcal{C}_{k,v}$ along the path $\gamma$, i.e.
$$
\partial_\gamma=(-1)^{i-1}\partial_{v_{2i-1}v_{2i}}\partial^{-1}_{v_{2i-1}v_{2i-2}}
\ldots\partial^{-1}_{v_5v_4}\partial_{v_3v_4}\partial^{-1}_{v_3v_2}\partial_{v_1v_2}.
$$
The inverses exist by $(2)$. Define $\mathring{\partial}\colon \mathring{\mathcal{C}}_{k-1}\rightarrow\mathring{\mathcal{C}}_{k}$ on
a generator $u\in I_{k-1}\cap \mathring{\mathcal{M}}$ by
$$
\mathring{\partial}(u) = \textstyle{\sum_{v\in \mathring{\mathcal{M}}_{k},\gamma\in\Gamma_{u,v}^\mathcal{M}}\partial_\gamma(u)}.
$$

\begin{thm}[\cite{Joll05},\cite{Skol06}]\label{JolSkol}
If $\mathcal{M}$ is a Morse matching, then cochain complex $(\mathcal{C}_k,\partial_k)_{k\in\Z}$ is homotopy equivalent to the cochain complex $(\mathring{\mathcal{C}}_k,\mathring{\partial}_k)_{k\in\Z}$.
\end{thm}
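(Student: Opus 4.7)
The plan is to prove the theorem via the homological perturbation paradigm, constructing explicit chain maps $\pi \colon \mathcal{C}_* \to \mathring{\mathcal{C}}_*$ and $\iota \colon \mathring{\mathcal{C}}_* \to \mathcal{C}_*$ together with a chain homotopy $h \colon \mathcal{C}_* \to \mathcal{C}_{*-1}$ satisfying $\pi\iota = \mathrm{id}_{\mathring{\mathcal{C}}_*}$ and $\mathrm{id}_{\mathcal{C}_*} - \iota\pi = \partial h + h\partial$. The entire construction is driven by a single contracting operator $\phi$ built from the Morse matching $\mathcal{M}$.

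First I would define $\phi \colon \mathcal{C}_{k+1} \to \mathcal{C}_k$ by $\phi|_{\mathcal{C}_{k+1,v}} = \partial_{u,v}^{-1}$ whenever $u \to v \in \mathcal{M}$ with $u \in I_k$ and $v \in I_{k+1}$, and $\phi = 0$ on every other summand. Condition (1) makes this well defined, since each target has at most one matched source; condition (2) guarantees the needed inverses exist; and one checks $\phi^2 = 0$, because the image of $\phi$ lies in the source summands of matched edges, where $\phi$ again vanishes by (1). Then I would set
$$h = \phi \sum_{n \geq 0} (\partial \phi)^n = \phi + \phi\partial\phi + \phi\partial\phi\partial\phi + \cdots,$$
and declare $\iota\pi := \mathrm{id}_{\mathcal{C}_*} - (\partial h + h \partial)$. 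By acyclicity (condition (3)) together with finiteness of each $I_k$, every zig-zag trail in $\Gamma^{\mathcal{M}}_{\mathcal{C}_*}$ has bounded length, so the geometric series defining $h$ reduces on each generator to a finite sum; term by term, $\phi (\partial\phi)^n (x)$ picks up the gradient paths of length $2n+1$ emanating from $x$.

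A short algebraic manipulation using $\phi^2 = 0$, $\partial^2 = 0$ and the definition of $h$ shows that $\iota\pi$ is idempotent and has image supported on the critical summands $\mathring{\mathcal{C}}_*$; taking $\pi$ to be the induced projection onto $\mathring{\mathcal{C}}_*$ and $\iota$ the corresponding inclusion yields $\pi\iota = \mathrm{id}_{\mathring{\mathcal{C}}_*}$ and the desired homotopy identity. Computing $\mathring\partial = \pi\,\partial\,\iota$ and expanding along zig-zag paths then produces exactly the formula $\mathring\partial(u) = \sum_{\gamma \in \Gamma^{\mathcal{M}}_{u,v}} \partial_\gamma(u)$ given in the definition, with the sign $(-1)^{i-1}$ arising from the alternating $\partial \cdot \phi$ pattern along $\gamma$. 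The main obstacle, and the place where all three Morse-matching conditions play their distinct roles, is termination of the formal series defining $h$: condition (1) makes $\phi$ well defined and nilpotent, condition (2) produces the inverses, and condition (3) is precisely what turns the \emph{a priori} infinite series $\sum_n (\partial\phi)^n$ into a genuine finite sum on every generator — without it neither $h$ nor $\mathring\partial$ would even be defined. Once termination is secured, the remaining verifications are routine bookkeeping of signs along gradient paths.
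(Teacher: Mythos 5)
The paper does not prove Theorem~\ref{JolSkol}: it is quoted verbatim from J\"ollenbeck~\cite{Joll05} and Sk\"oldberg~\cite{Skol06} and used as a black box to compute the Tor-algebras of $X(\F_p^n)$ and $K(\F_p^n)$. So there is no internal proof to measure your proposal against, and the comparison must be with the cited sources.

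Your sketch is a faithful outline of Sk\"oldberg's actual argument, which does proceed via the homological perturbation paradigm: the nilpotent splitting operator $\phi$ built from the matching, the geometric series homotopy $h=\phi\sum_{n\ge 0}(\partial\phi)^n$, and the induced projection onto the critical part. Two of your verifications are complete and correct: $\phi^2=0$ follows exactly as you say from condition (1), since no index is both the source and the target of a matched edge; and the termination of the series is right, because each term $\phi(\partial\phi)^n$ stays inside two consecutive (finite) index sets $I_{k-1}\cup I_k$ and acyclicity bars directed cycles, so trails have bounded length on each generator. Those are the two places where a naive perturbation argument could silently fail, and you handle both.

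Where the proposal is thinner than a proof is the step you label ``a short algebraic manipulation'': showing that $P:=\mathrm{id}-(\partial h+h\partial)$ is an idempotent chain map whose image is carried isomorphically onto $\mathring{\mathcal{C}}_*$ by the projection to critical summands. You cannot simply \emph{declare} $\iota\pi:=P$ and then ``take'' $\pi$ and $\iota$ afterwards; you first have to show $P$ factors through $\mathring{\mathcal{C}}_*$, which requires checking that every non-critical component of $P(x)$ cancels (this uses the explicit block structure of $\partial$ relative to the decomposition into critical generators, sources, and targets of matched edges, and the fact that $\partial_{u,v}$ is invertible precisely on the matched blocks). This is the heart of Sk\"oldberg's proof, not a footnote. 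Similarly, the sign $(-1)^{i-1}$ in $\partial_\gamma$ does require an actual computation tied to the Koszul conventions implicit in your $h$, and should not be waved away as ``routine bookkeeping'': if the sign in $h$ is wrong, $\mathring\partial$ fails to square to zero. These are gaps of rigor, not of approach---you have identified the correct proof strategy, which is indeed the one in the references the paper cites.
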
\vspace{1mm}

\subsection{Bigraded Betti numbers of the universal complexes}
We will use the Morse theory for a calculation of the Tor-algebra of a matroid $\mathcal{K}$. Recall that all maximal simplices in a matroid $\mathcal K$ have the same dimension, and the {\emph rank} of a matroid is the number of vertices in a maximal simplex.

Let us define a Morse matching on $R^*(\mathcal{K})$.
For every $u_Av_B\in R^*(\mathcal{K})$ we define the sets
\begin{align*}
M(u_Av_B)=&\{a\in A\mid \{a\}\cup B\in \mathcal{K}, \{x\}\cup B\not\in \mathcal{K}\text{ for all }x\in A\cap[a-1], \\
 &\text{ for every }y\in B\cap[a-1]\text{ there is }x\in A\cap[y-1]\text{ such that } \{x\}\cup B\setminus\{y\}\in\mathcal{K}
  \},\\
N(u_Av_B)=&\{b\in B\mid
 \{x\}\cup B\setminus\{b\}\not\in \mathcal{K}\text{ for all }x\in A\cap [b-1],\\
 &\text{ for every }y\in B\cap[b-1]\text{ there is }x\in A\cap[y-1]\text{ such that } \{x\}\cup B\setminus\{y\}\in\mathcal{K}
\}.
\end{align*}
By the definition of the above sets, we have
\begin{itemize}
\item $|M(u_Av_B)|+|N(u_Av_B)|\le 1$,
\item if $M(u_Av_B)=\{a\}$ then $N(u_{A\setminus\{a\}}v_{\{a\}\cup B})=\{a\}$ (and $M(u_{A\setminus\{a\}}v_{\{a\}\cup B})=\emptyset$),
\item if $N(u_Av_B)=\{b\}$ then $M(u_{A\cup\{b\}}v_{B\setminus\{b\}})=\{b\}$ (and $N(u_{A\cup\{b\}}v_{B\setminus\{b\}})=\emptyset$).
\end{itemize}

From the above properties, we have that the set
\begin{align*}
\mathcal{M}=&\{u_A v_B\to u_{A\setminus\{a\}}v_{\{a\}\cup B}\mid a\in M(u_Av_B)\}
\end{align*}
equals to the set $\{u_{\{b\}\cup A} v_{B\setminus\{b\}}\to u_A v_B\mid b\in N(u_Av_B)\}$ and it
satisfies the conditions (1) and (2) from the definition of Morse matching. Let us prove that $\mathcal{M}$ also satisfies the condition (3). Suppose
\begin{align*}
u_{A\setminus\{a\}}v_{\{a\}\cup B}\stackrel{\mathcal M}{\longrightarrow} u_{A}v_{B}\longrightarrow u_{A\setminus\{x\}}v_{\{x\}\cup B}\stackrel{\mathcal M}{\longrightarrow}u_{\{b\}\cup A\setminus\{x\}}v_{\{x\}\cup B\setminus\{b\}}
\end{align*}
is a part of the directed graph $\Gamma_{\mathcal{R}^*(\mathcal{K})}^\mathcal{M}$. We have $M(u_Av_B)=\{a\}$ and $M(u_{\{b\}\cup A\setminus\{x\}}v_{\{x\}\cup B\setminus\{b\}})=\{b\}$.

Suppose $b<a$. By the definition of $a\in M(u_A v_B)$ there exists $c\in A\cap [b-1]$ such that $\{c\}\cup B\setminus\{b\}$ is a simplex. Note that by the definition of the element $a>c$ in $M(u_Av_B)$, the set $\{c\}\cup B$ is not a simplex in $\mathcal K$. Since $u_{A\setminus\{x\}}v_{\{x\}\cup B}$ is in $R^*(\mathcal{K})$ the set $\{x\}\cup B$ is a simplex in $\mathcal K$ and it has more vertices than the simplex $\{c\}\cup B\setminus\{b\}$. Since $\mathcal K$ is a matroid there is an element $y\in \{x\}\cup B$ such that $\{y,c\}\cup B\setminus\{b\}$ is a simplex in $\mathcal K$. There are only two possibilities for $y$, namely $(\{x\}\cup B)\setminus (\{c\}\cup B\setminus\{b\})=\{b,x\}$. Since $\{b\}\cup(\{c\}\cup B\setminus\{b\})=\{c\}\cup B$ is not a simplex, $y=x$ and $\{c,x\}\cup B\setminus\{b\}$ is a simplex. We get that for $c\in A\setminus\{x\}\cap [b-1]$ the set $\{c,x\}\cup B\setminus\{b\}$ is a simplex, hence $b\not\in N(u_{A\setminus\{x\}}v_{\{x\}\cup B})$. Since  the arrow $u_{\{b\}\cup A\setminus\{x\}}v_{\{x\}\cup B\setminus\{b\}}\to u_{A\setminus\{x\}}v_{\{x\}\cup B}$ is in $\mathcal M$, we have $\{b\}=M(u_{\{b\}\cup A\setminus\{x\}}v_{\{x\}\cup B\setminus\{b\}})=N(u_{A\setminus\{x\}}v_{\{x\}\cup B})\ne\{b\}$ which is a contradiction.

We proved that $a<b$. Hence for a cycle
\begin{align*}
u_{A_0}v_{B_0}\stackrel{\mathcal M}{\longrightarrow} u_{A_1}v_{B_1}\longrightarrow u_{A_2}v_{B_2}\stackrel{\mathcal M}{\longrightarrow}u_{A_3}v_{B_3}\longrightarrow\cdots \stackrel{\mathcal M}{\longrightarrow} u_{A_{2k-1}}v_{B_{2k-1}}\longrightarrow u_{A_{2k}}v_{B_{2k}}=u_{A_0}v_{B_0}
\end{align*}
in $\Gamma_{\mathcal{R}^*(\mathcal{K})}^\mathcal{M}$ we have $M(u_{A_0}v_{B_0})<M(u_{A_2}v_{B_2})<\ldots<M(u_{A_{2k}}v_{B_{2k}})=M(u_{A_0}v_{B_0})$
which is impossible. We proved that $\mathcal M$ also satisfies condition (3) of the definition of  Morse matching.

Let us show that $\mathring{\partial}$ is trivial. For every subset of vertices $M\subseteq [m]$ let $R_M^*(\mathcal{K})$ be the subalgebra of $R^*(\mathcal{K})$ generated by those $u_Av_B$ for which $A\cup B=M$. The set of generators $M$ is invariant with respect to the differential, so $R_M^*(\mathcal{K})$ has differential and $R^*(\mathcal{K})=\oplus_{M\subseteq [m]} R_M^*(\mathcal{K})$.
The chain complex $R_M^*(\mathcal{K})$ may be nontrivial only in a bidegree $(-i,2|M|)$.
Let us show that the homology of $R_M^*(\mathcal{K})$ is nontrivial only in the bidegree $(r(\mathcal{K}_M)-|M|,2|M|)$, where $r(\mathcal{K}_M)$ is the rank of the matroid $\mathcal{K}_M$ which is the full subcomplex of $\mathcal{K}$ spanned on a subset $M$.

Let $u_Av_B$ be such that $B$ is not a maximal simplex in $\mathcal{K}_M$.  Let $a\in A$ be the smallest number such that $\{a\}\cup B$ is a simplex. If there is no $b\in B\cap [a-1]$ such that $\{x\}\cup B\setminus\{b\}$ is a simplex for some $x\in A\cap [b-1]$, then $a\in M(u_A v_B)$ otherwise $b\in N(u_A v_B)$ where $b$ is the smallest number in $B\cap [a-1]$ such that there is $x\in A\cap [b-1]$ such that $\{x\}\cup B\setminus\{b\}$ is a simplex. Hence $M(u_A v_B)\ne \emptyset$ or $N(u_A v_B)\ne \emptyset$, therefore $u_Av_B\not\in\mathring{\mathcal{M}}$.
We proved that if $u_Av_B\in\mathring{\mathcal{M}}$ then $B$ is a maximal simplex in $\mathcal M$, so there is no arrow in $\Gamma_{\mathcal{R}^*(\mathcal{K})}$ starting in $u_Av_B$, hence $\mathring\partial(u_A v_B)=0$ and the morphism $\mathring\partial$ is trivial. We proved the following theorem.

\begin{thm}\label{Morsecomplex}
For a matroid $\mathcal{K}$ the Tor-module $\Tor_{\k[v_1,\ldots,v_m]}(\k[\mathcal{K}];\k)$ is isomorphic to the free complex $\mathring{C}_*$ generated by $\mathring{\mathcal M}$.
\end{thm}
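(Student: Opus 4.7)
The strategy is to invoke Theorem~\ref{JolSkol}: if one can construct a Morse matching $\mathcal{M}$ on the digraph $\Gamma_{R^*(\mathcal{K})}$ associated with the Koszul-type complex $R^*(\mathcal{K})$, then its Morse complex on critical cells is quasi-isomorphic to $R^*(\mathcal{K})$, whose cohomology by Theorem~\ref{zkcoh} is precisely $\Tor_{\k[v_1,\ldots,v_m]}(\k[\mathcal{K}],\k)$. The proof therefore reduces to (i) constructing $\mathcal{M}$, (ii) verifying the three Morse axioms, and (iii) showing that the reduced differential $\mathring{\partial}$ on the critical generators vanishes; once these are done, the isomorphism with the free complex $\mathring{C}_\ast$ follows formally.

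For the construction, I would match each basis element $u_A v_B$ either with its ``differential partner'' $u_{A\setminus\{a\}}v_{\{a\}\cup B}$ or with its ``antiderivative'' $u_{A\cup\{b\}}v_{B\setminus\{b\}}$, selecting $a$ or $b$ by a deterministic rule based on the total order on $[m]$. The sets $M(u_Av_B)$ and $N(u_Av_B)$ introduced above accomplish exactly this, and the three bullet points immediately following their definition show that the resulting collection of arrows is a well-defined matching (condition (1)) consisting of non-zero boundary morphisms (condition (2): each corresponds to the isomorphism $u_i \mapsto v_i$ on the relevant rank-one summand).

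The main obstacle is acyclicity (condition (3)). I would track the numerical value of $M(\cdot)$ along any alternating zig-zag path in $\Gamma_{R^*(\mathcal{K})}^{\mathcal{M}}$ and prove it is strictly monotone. Concretely, on a segment
\[
u_{A\setminus\{a\}}v_{\{a\}\cup B}\stackrel{\mathcal{M}}{\longrightarrow} u_A v_B \longrightarrow u_{A\setminus\{x\}}v_{\{x\}\cup B}\stackrel{\mathcal{M}}{\longrightarrow} u_{\{b\}\cup A\setminus\{x\}} v_{\{x\}\cup B\setminus\{b\}},
\]
the assumption $b<a$ would combine with the defining property of $a\in M(u_A v_B)$ to produce a $c\in A\cap[b-1]$ with $\{c\}\cup B\setminus\{b\}\in\mathcal{K}$ but $\{c\}\cup B\notin\mathcal{K}$. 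Applying the matroid exchange axiom to the simplex $\{x\}\cup B$ and the independent set $\{c\}\cup B\setminus\{b\}$, the only viable exchanged vertex is $x$, forcing $\{c,x\}\cup B\setminus\{b\}\in\mathcal{K}$; but this contradicts $b\in M(u_{\{b\}\cup A\setminus\{x\}} v_{\{x\}\cup B\setminus\{b\}})$. Hence $a<b$, and along any closed loop the values of $M$ would have to increase and then return to the start, which is impossible.

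Finally, to see $\mathring{\partial}\equiv 0$, I would use the fact that the differential respects the decomposition $R^*(\mathcal{K})=\bigoplus_{M\subseteq[m]} R^*_M(\mathcal{K})$ indexed by $M=A\cup B$, so it suffices to argue fiberwise. The claim is that a critical $u_A v_B$ must have $B$ a facet of the matroid $\mathcal{K}_M$: otherwise some $a\in A$ satisfies $\{a\}\cup B\in\mathcal{K}$, and choosing the smallest such $a$ places $u_A v_B$ into the source or target of some arrow in $\mathcal{M}$, via $M(u_Av_B)$ or some $b\in N(u_Av_B)$ respectively. When $B$ is a facet of $\mathcal{K}_M$, no element of $A$ extends $B$ inside $\mathcal{K}$, so the boundary of $u_A v_B$ in $R^*(\mathcal{K})$ vanishes, hence so does $\mathring{\partial}(u_Av_B)$. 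Theorem~\ref{JolSkol} then delivers the stated isomorphism.
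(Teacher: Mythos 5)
Your proposal is correct and follows essentially the same approach as the paper: you define the matching via the sets $M(u_Av_B)$ and $N(u_Av_B)$, verify conditions (1)--(2) via the three bullet-point properties, prove acyclicity by tracking $M(\cdot)$ monotonically along zig-zag paths with the matroid exchange axiom applied to $\{x\}\cup B$ and $\{c\}\cup B\setminus\{b\}$, and show $\mathring{\partial}=0$ by decomposing $R^*(\mathcal{K})=\bigoplus_{M} R^*_M(\mathcal{K})$ and observing that critical $u_Av_B$ must have $B$ a facet of $\mathcal{K}_M$. The only cosmetic difference is that you phrase the final contradiction in the acyclicity step via $b\in M(u_{\{b\}\cup A\setminus\{x\}}v_{\{x\}\cup B\setminus\{b\}})$ where the paper phrases it via $b\in N(u_{A\setminus\{x\}}v_{\{x\}\cup B})$, but these are equivalent by the stated correspondence between $M$ and $N$.
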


\begin{cor}
For a matroid $\mathcal{K}$ the Tor-module $\Tor_{\Z[v_1,\ldots,v_m]}(\Z[\mathcal{K}];\Z)$ has no torsion.
\end{cor}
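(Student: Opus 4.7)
The plan is to note that the Morse matching $\mathcal M$ constructed in the proof of Theorem~\ref{Morsecomplex} is defined entirely in combinatorial terms: the sets $M(u_Av_B)$ and $N(u_Av_B)$ depend only on which subsets of $[m]$ are simplices of $\mathcal K$, with no reference to the coefficient ring. Therefore the same subset $\mathcal M$ of edges makes sense in the digraph $\Gamma_{R^*(\mathcal K)}$ computed over $\Z$.

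I would then verify, one by one, that the three conditions of a Morse matching continue to hold over $\Z$. Conditions (1) and (3) are combinatorial statements about the graph, proved in the excerpt without any use of the coefficients, so they transfer immediately. Condition (2) is the only place where the coefficient ring enters: it asks that each matched map $\partial_{u,v}$ be an isomorphism. But in the Koszul-type differential on $R^*(\mathcal K)$ the matched edge $u_A v_B \to u_{A\setminus\{a\}} v_{\{a\}\cup B}$ is given by multiplication by $\pm 1$, which is an isomorphism of $\Z$-modules. Hence $\mathcal M$ is a Morse matching over $\Z$ as well.

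Next, I would observe that the proof that $\mathring\partial = 0$ is also purely combinatorial: it only uses that whenever $u_A v_B \in \mathring{\mathcal M}$ the set $B$ is a maximal simplex of $\mathcal K_{A\cup B}$, so there is no outgoing edge in $\Gamma_{R^*(\mathcal K)}$ from $u_Av_B$. Since this holds independently of the ring, the reduced complex $\mathring C_*$ over $\Z$ has trivial differential.

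Applying Theorem~\ref{JolSkol} over $\Z$, one concludes that $\Tor_{\Z[v_1,\ldots,v_m]}(\Z[\mathcal K];\Z)$ is isomorphic to the free $\Z$-module on $\mathring{\mathcal M}$ (with zero differential), and in particular has no torsion. I do not anticipate a serious obstacle here: the entire content is that the Morse-theoretic argument of Theorem~\ref{Morsecomplex} is characteristic-free, the only nontrivial point being the verification that each matched differential is $\pm 1$ rather than a more complicated integer.
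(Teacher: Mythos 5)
Your proposal is correct and is essentially the paper's own argument: Theorem~\ref{Morsecomplex} is stated for $\k$ ranging over $\F_p$ or $\Z$, so the corollary follows immediately because $\mathring{C}_*$ is a free $\Z$-module with trivial differential, and your verification that the Morse matching is characteristic-free — in particular that each matched edge has coefficient $\pm 1$ in the Koszul differential — is exactly what makes that theorem valid over $\Z$. (The paper also records, in a remark, an alternative one-line proof via Hochster's formula together with the fact that full subcomplexes of a matroid are matroids, but your route is the intended one.)
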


\begin{rem} The fact that the Tor-module of a matroid $\mathcal {K} $ is torsion free also follows from the Hocshter formula and the observation that a full subcomplex of a matroid is also a matroid.
\end{rem}

From the above Morse matching, we will derive some recursion formulas for Betti numbers of cohomologies $H^{*,*}(X(\F_p^n))$.
The fact that $u_Av_B$ can be in $\mathring{\mathcal M}$ only in the case when $B$ is a maximal simplex in $\mathcal{K}_M$, $M=A\cup B$, implies that $H^{-i,2j}(R_M^*(\mathcal{K}))= 0$ for all $(-i,2j)\ne (r(\mathcal{K}_M)-|M|, 2|M|)$. Since the Euler characteristics of the homology and the chain complex coincide, we have
\begin{align*}
\beta^{r(\mathcal{K}_M)-|M|, 2|M|}(R_M^*(\mathcal{K}))
=\chi(R_M^*(\mathcal{K}))=\sum_{i=0}^{|M|-r(\mathcal{K}_M)}(-1)^i\dim_{\k} R_M^{-i,2|M|}(\mathcal{K}).
\end{align*}
Using the decomposition $R^*(\mathcal{K})=\oplus_{M\subseteq [m]}R_M^*(\mathcal{K})$ we get
\begin{align*}
\beta^{-i, 2j}(R^*(\mathcal{K}))=\sum_{\substack{|M|=j\\ r(\mathcal{K}_M)=j-i}}\sum_{l=0}^{i}(-1)^l\dim_{\k} R_M^{-l,2|M|}(\mathcal{K}).
\end{align*}

In the case of a matroid $X(\F_p^n)=:\mathcal{X}^n$ we can use the following decomposition $R^*(\mathcal{X}^n)=S^*(\mathcal{X}^n)\oplus T^*(\mathcal{X}^n)$, where $S^*(\mathcal{X}^n)$ is the subalgebra of $R^*(\mathcal{X}^n)$ generated by those $u_Av_B$ for which $\spn(A\cup B)=\F_p^n$ and $T^*(\mathcal{X}^n)$ is the subalgebra of $R^*(\mathcal{X}^n)$ generated by those $u_Av_B$ for which $\spn(A\cup B)\ne \F_p^n$.
If $\dim \spn(A\cup B)=l$ there is the unique $l$-dimensional subspace $M$ of $\F_p^n$ such that $A\cup B\subseteq M$.
The number of $l$-dimensional subspaces in $\F_p^n$ equals to the $p$-binomial coefficient ${{n\brack l}}_p=\tfrac{(p)_n}{(p)_l (p)_{n-l}}$, where $(p)_k=(p-1)\ldots (p^k-1)$ (see \cite[p. 20]{GasperMizan}). Hence
\begin{align}\label{recursionforXn}
R^*(\mathcal{X}^n)\cong S^*(\mathcal{X}^n)\oplus\left(\bigoplus_{l=0}^{n-1} {n\brack l}_p S^*(\mathcal{X}^l)\right).
\end{align}
We can summarize the above calculations in the result.

\begin{thm}\label{Propositionrecursion}
For every prime number $p$ we have:
\begin{enumerate}
\item[(a)] If $j-i<n$ then
$\displaystyle{
\beta^{-i,2j}(X(\F_p^n))={n\brack j-i}_p \beta^{-i,2j}(X(\F_p^{j-i}))}$.
\item[(b)] If $j-i=n$ then
$\displaystyle{\beta^{-i,2j}(X(\F_p^n))=\sum_{k=0}^n \frac{(-1)^{n-k}p^{\frac 1 2 k(k-1)}(p)_n}{k!(p)_{n-k}} \binom{p^n-1-k}{j-k}
 -\sum_{l=0}^{n-1} (-1)^{n-l} \beta^{-j+l, 2j}(X(\F_p^n))
}$.
\end{enumerate}
\end{thm}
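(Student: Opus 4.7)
My plan is to combine the decomposition \eqref{recursionforXn} of $R^*(\mathcal{X}^n)$ with the identification, established in the proof of \Cref{Morsecomplex}, that a critical cell $u_Av_B$ of the summand $S^*(\mathcal{X}^l)$ has $B$ a basis of the $l$-dimensional subspace $\mathrm{span}(A\cup B)$ and hence $|B|=l$. In bidegree $(-i,2j)$ one has $|A|=i$ and $|B|=j-i$, so $\beta^{-i,2j}(S^*(\mathcal{X}^l))$ can be nonzero only when $l=j-i$.

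For part (a), I apply \eqref{recursionforXn} directly. When $j-i<n$ the top summand $S^*(\mathcal{X}^n)$ contributes nothing, and inside the finite sum only the index $l=j-i$ survives, giving
\[
\beta^{-i,2j}(X(\F_p^n))={n\brack j-i}_p\,\beta^{-i,2j}(S^*(\mathcal{X}^{j-i})).
\]
Applying \eqref{recursionforXn} in turn to $R^*(\mathcal{X}^{j-i})$, every summand indexed by $l'<j-i$ vanishes in bidegree $(-i,2j)$ by the same criterion, so $\beta^{-i,2j}(X(\F_p^{j-i}))=\beta^{-i,2j}(S^*(\mathcal{X}^{j-i}))$, and substitution yields the identity.

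For part (b), the same observation shows that at bidegree $(-(j-n),2j)$ only $S^*(\mathcal{X}^n)$ contributes, whereas all other nonzero Betti numbers in the column $R^{*,2j}$, namely $\beta^{-(j-l),2j}(X(\F_p^n))$ for $0\le l\le n-1$, are already covered by part (a). I then compute the Euler characteristic of the column
\[
\sum_i(-1)^i\beta^{-i,2j}(X(\F_p^n))=\sum_i(-1)^i\dim R^{-i,2j}(\mathcal{X}^n)
\]
using $\dim R^{-i,2j}(\mathcal{X}^n)=f_{j-i-1}(\mathcal{X}^n)\binom{p^n-1-(j-i)}{i}$ — a linearly independent $B\subseteq\F_p^n\setminus\{0\}$ of size $j-i$ and a disjoint $A$ of size $i$ among the remaining $p^n-1-(j-i)$ nonzero vectors — together with the identity $f_{k-1}(\mathcal{X}^n)=p^{k(k-1)/2}(p)_n/(k!(p)_{n-k})$, obtained from \Cref{month1} by factoring $p^j$ out of each term $p^n-p^j$ in the numerator. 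Substituting $k=j-i$ and isolating $\beta^{-(j-n),2j}(X(\F_p^n))$ in the resulting Euler characteristic identity produces the stated formula.

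The main bookkeeping obstacle is the sign tracking in part (b): after the substitution $i=j-k$ the factor $(-1)^{j-k}$ must be combined with the overall sign $(-1)^{j-n}$ arising when solving for $\beta^{-(j-n),2j}$, and one checks, using $(-1)^{2j}=1$, that these collapse to $(-1)^{n-k}$ in the principal sum and to $(-1)^{n-l}$ in the correction sum, matching the statement verbatim.
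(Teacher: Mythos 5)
Your proposal is correct and takes essentially the same approach as the paper: part~(a) uses the decomposition~\eqref{recursionforXn} together with the concentration of cohomology of $S^*(\mathcal X^l)$ in bidegree $(-(j-l),2j)$ from \Cref{Morsecomplex}, and part~(b) is an Euler-characteristic computation in the column $R^{*,2j}(\mathcal X^n)$ solved for the top-weight Betti number, with the correct generator count $\dim R^{-i,2j}(\mathcal X^n)=f_{j-i-1}(\mathcal X^n)\binom{p^n-1-(j-i)}{i}$ and the correct sign bookkeeping. The only cosmetic difference is that you invoke the equality of Euler characteristics of a chain complex and its homology directly, whereas the paper reaches the same alternating sum block-by-block through the decomposition of $R^*(\mathcal X^n)$ into the subcomplexes $S^*(\mathcal X^l)$.
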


\begin{proof}
\begin{enumerate}
\item[(a)]  By the above \Cref{Morsecomplex} we have $\beta^{-i,2j}(S^*(\mathcal{X}^l))=0$ for $j-i\ne l$. Using the equation \ref{recursionforXn}, we have
\begin{align*}
\beta^{-i,2j}(X(\F_p^n))&=\beta^{-i,2j}(R^*(\mathcal{X}^n))=\beta^{-i,2j}(S^*(\mathcal{X}^n))+\sum_{l=0}^{n-1}{n\brack l}_p \beta^{-i,2j}(S^*(\mathcal{X}^l))\\
&={n\brack j-i}_p \beta^{-i,2j}(S^*(\mathcal{X}^{j-i}))
={n\brack j-i}_p \beta^{-i,2j}(X(\F_p^{j-i})).
\end{align*}
\item[(b)]
In the case $j-i=n$ we use the equality (\ref{recursionforXn}) and we get
$$
\chi(R^*(\mathcal{X}^n))=\chi(S^*(\mathcal{X}^n))+\sum_{l=0}^{n-1} {n\brack l}_p\chi(S^*(\mathcal{X}^l))
$$
The number of generators $u_Av_B$ in $R^{-j+k,2j}(\mathcal{X}^n)$ equals
\begin{align*}
\frac{(p^n-1)(p^n-p)\cdots(p^n-p^{k-1})}{k!}\binom{p^n-1-k}{j-k}=
\frac{p^{\frac 1 2 k(k-1)}(p)_n}{k!(p)_{n-k}} \binom{p^n-1-k}{j-k}.
\end{align*}
Let $l=0,\ldots,n$. If $k<j-l$ then $S^{-k,2j}(\mathcal{X}^l)=\emptyset$, since there is no simplex in $\mathcal{X}^l$ with $j-k>l$ vertices.
Hence the complex $S^{*,2j}(\mathcal{X}^l)=\oplus_{k=i}^{j}S^{-k,2j}(\mathcal{X}^l)$ is a chain subcomplex of $R^{*,*}(\mathcal{X}^n)$ and
by \Cref{Morsecomplex} we have $\chi(S^{*,2j}(\mathcal{X}^l))=(-1)^{j-l}\beta^{-i, 2j}(R^{*,*}(\mathcal{X}^l))$. Hence
\begin{align*}
\beta^{-i, 2j}(X(\F_p^n))
&=(-1)^i\left(\chi(R^{*,2j}(\mathcal{X}^n))-\sum_{l=0}^{n-1} {n\brack l}_p\chi(S^{*,2j}(\mathcal{X}^l))\right)\\
&=(-1)^i\chi(R^{*,2j}(\mathcal{X}^n))-(-1)^i\sum_{l=0}^{n-1} {n\brack l}_p(-1)^{j-l}\beta^{-j+l,2j}(R^{*,*}(X^l))\\
&=(-1)^i\chi(R^{*,2j}(\mathcal{X}^n))-\sum_{l=0}^{n-1} (-1)^{n-l}{n\brack l}_p\beta^{-j+l,2j}(R^{*,*}(X^l)).
\end{align*}
\end{enumerate}
The theorem follows from the fact that $\beta^{-j+l, 2j}(X(\F_p^n))={n\brack l}_p\beta^{-j+l,2j}(R^{*,*}(X^l))$ which follows from \Cref{Morsecomplex}.
\end{proof}

Using the above theorem for the prime $p=2$, it is easy to calculate Betti numbers of the universal complex $X(\F_2^n)$ for small numbers $n$. For $n=2$ beside $\beta^{0,0}(X(\F_2^2))$ there is only one nonzero Betti number, namely $\beta^{-1,6}(X(\F_2^2))=1$. For the cases $n=3,4$ see \Cref{BettiX3} and \Cref{BettiX4}.
\begin{center}
\begin{table}[h]
\begin{tabular}{ |c|c c c c c c c| }
\hline
$l$\textbackslash$i$ & 1 & 2 & 3 & 4 & 5 & 6 & 7\\
\hline
1 & 0 & 0 & 0 & 0 & 0 & 0 & 0\\
2 & 0 & 0 & 7 & 0 & 0 & 0 & 0\\
3 & 0 & 0 & 0 & 7 &42 &42 &13 \\
\hline
\end{tabular}
\caption{The table of the Betti numbers $\beta^{l-i,2i}(X(\F_2^3))$.} \label{BettiX3}
\end{table}
\end{center}
{\scriptsize
\begin{center}
\begin{table}[h]
\begin{tabular}{ |c|c c c c c c c c c c c c c c c| }
\hline
$l$\textbackslash$i$ & 1 & 2 & 3 & 4 & 5 & 6 & 7 & 8 & 9 & 10 & 11 & 12 & 13 & 14 & 15\\
\hline
1 & 0 & 0 & 0 & 0 & 0 & 0 & 0 & 0 & 0 & 0 & 0 & 0 & 0 & 0 & 0\\
2 & 0 & 0 & 35 & 0 & 0 & 0 & 0 & 0 & 0 & 0 & 0 & 0 & 0 & 0 & 0\\
3 & 0 & 0 & 0 & 105 & 630 & 630 & 195 & 0 & 0 & 0 & 0 & 0 & 0 & 0 & 0\\
4 & 0 & 0 & 0 & 0 & 168 & 4480 & 27420 & 79695 & 140140 & 163548 &
 130725 & 71225 & 25410 & 5370 & 511\\
\hline
\end{tabular}
\caption{The table of the Betti numbers $\beta^{l-i,2i}(X(\F_2^4))$.}\label{BettiX4}
\end{table}
\end{center}
}
\begin{rem}
Using the first part of \Cref{Propositionrecursion}, we get that the second row in the \Cref{BettiX4} is just the $35$ times the second row in the table for Betti numbers of $X(\F_2^2)$, since there are $35={4\brack 2}_2$ planes in $\F_2^4$, and the third row in the \Cref{BettiX4} is 15 times of the third row of \Cref{BettiX3} since there are $15={4\brack 3}_2$ 3-dimensional subspaces in $\F_2^4$. The last row of \Cref{BettiX4} is obtained by the first three rows and the second part of \Cref{Propositionrecursion}.

Similar tables can be obtained for odd primes $p$, but tables are huge, for examples the table of Betti numbers of $X(\F_3^4)$ has $3^4-1=80$ columns, and the largest Betti number is $\beta^{-4,84}(X(\F_3^4))\approx 6\cdot 10^{27}$.
\end{rem}

In the case of a matroid $K(\F_p^n)=:\mathcal{K}^n$ we can use an analog decomposition $R^*(\mathcal{K}^n)=S^*(\mathcal{K}^n)\oplus T^*(\mathcal{K}^n)$ as in the case $X(\F_p^n)$.
Again for $j-i<n$ we get
\begin{equation*}
\beta^{-i,2j}(K(\F_p^n))={n\brack j-i}_p \beta^{-i,2j}(K(\F_p^{j-i})).
\end{equation*}
The number of vertices in $K(\F_p^n)$ is $m=\tfrac{p^n-1}{p-1}$ and the number of generators $u_Av_B$ in $R^{-i,2j}(\mathcal{K}_n)$ equals
\begin{align*}
\frac{m(m-1)(m-{2\brack 1}_p)\cdots(m-{j-i-1\brack 1}_p)}{(j-i)!}&\binom{m-j+i}{i}\\
&\hspace{-2cm}=\frac{(p^n-1)(p^n-p)\cdots(p^n-p^{j-i-1})}{(j-i)!(p-1)^{j-i}}\binom{m-j+i}{i}\\
&\hspace{-2cm}=\frac{p^{\frac 1 2 (j-i)(j-i-1)}(p)_n}{(j-i)!(p-1)^{j-i}(p)_{n-j+i}} \binom{m-j+i}{i}.
\end{align*}
Hence
\begin{align*}
\beta^{-i, 2j}(K(\F_p^n))
=\sum_{k=0}^n \frac{(-1)^{n-k}p^{\frac 1 2 k(k-1)}(p)_n}{k!(p-1)^k(p)_{n-k}} \binom{m-k}{j-k}
 -\sum_{l=0}^{n-1} (-1)^{n-l} \beta^{-j+l, 2j}(K(\F_p^n)).
\end{align*}

Let us summarize the above calculations.
\begin{thm}\label{mt2} For every prime number $p$ we have:
\begin{enumerate}
\item[(a)] If $j-i<n$ then $\displaystyle{\beta^{-i,2j}(K(\F_p^n))={n\brack j-i}_p \beta^{-i,2j}(K(\F_p^{j-i}))}$.
\item[(b)] If $j-i=n$ then
$\displaystyle{\beta^{-i, 2j}(K(\F_p^n))
=\sum_{k=0}^n \frac{(-1)^{n-k}p^{\frac 1 2 k(k-1)}(p)_n}{k!(p-1)^k(p)_{n-k}} \binom{m-k}{j-k}
 -\sum_{l=0}^{n-1} (-1)^{n-l} \beta^{-j+l, 2j}(K(\F_p^n))}$.
\end{enumerate}
\end{thm}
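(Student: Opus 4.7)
The plan is to adapt the proof of \Cref{Propositionrecursion} nearly verbatim, replacing the matroid $\mathcal{X}^n = X(\F_p^n)$ with $\mathcal{K}^n = K(\F_p^n)$ and using the $f$-vector from \Cref{month2} in place of that from \Cref{month1}. The first step is to decompose $R^*(\mathcal{K}^n) = S^*(\mathcal{K}^n) \oplus T^*(\mathcal{K}^n)$, where $S^*(\mathcal{K}^n)$ is the subalgebra generated by those $u_A v_B$ for which the lines in $A \cup B$ span all of $\F_p^n$. Since the number of $l$-dimensional subspaces of $\F_p^n$ equals the Gaussian binomial ${n \brack l}_p$, grouping the generators of $T^*(\mathcal{K}^n)$ by the subspace spanned by their underlying lines yields
\begin{equation*}
R^*(\mathcal{K}^n) \cong S^*(\mathcal{K}^n) \oplus \bigoplus_{l=0}^{n-1} {n \brack l}_p\, S^*(\mathcal{K}^l).
\end{equation*}

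By \Cref{Morsecomplex} applied to the matroid $\mathcal{K}^l$, the cohomology of $S^*(\mathcal{K}^l)$ is concentrated in bidegree $(l - j, 2j)$, where $j$ equals the number of vertices $|A \cup B|$ used. For part (a), the assumption $j - i < n$ implies that only the summand with $l = j - i$ contributes nontrivially to $\beta^{-i, 2j}(R^*(\mathcal{K}^n))$, and the same concentration principle identifies $\beta^{-i, 2j}(S^*(\mathcal{K}^{j-i}))$ with $\beta^{-i, 2j}(K(\F_p^{j-i}))$, giving the stated identity.

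For part (b), I would compute $\chi(R^{*, 2j}(\mathcal{K}^n))$ directly by counting generators $u_A v_B$ at bidegree $(-i, 2j)$: choose a simplex $B$ of size $k$, counted by $f_{k-1}(K(\F_p^n)) = \frac{p^{k(k-1)/2}(p)_n}{k!(p-1)^k(p)_{n-k}}$ (derived from \Cref{month2} by the same manipulation used for $X(\F_p^n)$), then pick an arbitrary disjoint $A$ of size $j-k$ from the remaining $m - k$ line-vertices, contributing $\binom{m-k}{j-k}$. Taking Euler characteristics of the $2j$-row of the decomposition, substituting $\chi(S^{*, 2j}(\mathcal{K}^l)) = (-1)^{j-l}\beta^{-j+l, 2j}(R^*(\mathcal{K}^l))$ (valid by the concentration above), and then invoking part (a) in the form $\beta^{-j+l, 2j}(K(\F_p^n)) = {n \brack l}_p \beta^{-j+l, 2j}(R^*(\mathcal{K}^l))$ (applicable since setting $-i = -j+l$ gives $j - i = l < n$) converts the Euler-characteristic identity into the stated recursion on $\beta^{-i, 2j}(K(\F_p^n))$.

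The main obstacle is purely bookkeeping: tracking the signs $(-1)^{j-k}$ through the manipulation so that they combine into the displayed $(-1)^{n-k}$, and confirming that the factors $(p-1)^k$ in the denominator of $f_{k-1}(K(\F_p^n))$ (which distinguish the line-count from the vector-count appearing in the $X(\F_p^n)$ case) propagate exactly as written. No essentially new idea beyond the proof of \Cref{Propositionrecursion} is required, and the argument will close once the count above is assembled and the recursion is solved for $\beta^{-i,2j}(K(\F_p^n))$.
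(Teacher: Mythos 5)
Your proposal is correct and follows essentially the same route as the paper: the paper likewise says ``we can use an analog decomposition $R^*(\mathcal{K}^n)=S^*(\mathcal{K}^n)\oplus T^*(\mathcal{K}^n)$ as in the case $X(\F_p^n)$,'' groups $T^*(\mathcal{K}^n)$ by the span of the underlying lines to get $\bigoplus_{l<n}{n\brack l}_p S^*(\mathcal{K}^l)$, counts generators using $m=\frac{p^n-1}{p-1}$ to obtain $f_{k-1}(K(\F_p^n))\binom{m-k}{j-k}$, and then multiplies by $(-1)^i=(-1)^{j-n}$ as in the proof of \Cref{Propositionrecursion} to produce the sign $(-1)^{n-k}$. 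Your identification $\beta^{-j+l,2j}(K(\F_p^n))={n\brack l}_p\beta^{-j+l,2j}(R^*(\mathcal{K}^l))$ via part (a) is exactly the substitution the paper uses to close the recursion, so there is no gap.
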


Although Betti numbers of $K(\F_p^n)$ are smaller than Betti numbers of $X(\F_p^n)$ they are still large. For example, the largest Betti number  of $K(\F_3^4)$ is $\beta^{-4,44}(K(\F_3^4))\approx 4.3\cdot 10^{14}$.

\section{Products in the cohomology of \texorpdfstring{$\mathcal{Z}_{X(\F_p^n)}$}{ZX(Fp)} and \texorpdfstring{$\mathcal{Z}_{K(\F_p^n)}$}{ZK(Fp)}}

The cohomology classes of  $\mathcal{Z}_{X(\F_p^n)}$ are represented by the cohomology classes of the full subcomplexes of $X(\F_p^n)$ and Baskakov's formula \cite{Baskakov02} may be used for determination of the cup product in $\widetilde{H}^{\ast}(\mathcal{Z}_{X(\F_p^n)})$. Let us fix $n$ and denote $\mathcal{X}:=X(\F_p^n)$ and $\mathcal{K}:=K(\F_p^n)$. Recall that $X(\F_2^n)=K(\F_2^n)$. For a subset $I$ of the set of vertices of $\mathcal{X}$, let $\mathcal{X}_I$ denotes the corresponding full subcomplex.

\begin{lemma} \label{join}
Let $I_1,\ldots ,I_k \subset \F_p^n \setminus \{0\}$ be such that
\[
\dim \spn (I_1) + \cdots +\dim \spn (I_k) = \dim \spn (I_1 \cup \cdots \cup I_k).
\]
Then
\[
\mathcal{X}_{I_1\sqcup \cdots \sqcup I_k} \cong \mathcal{X}_{I_1}\ast \cdots \ast \mathcal{X}_{I_k}.
\]
\end{lemma}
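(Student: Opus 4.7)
The plan is to unpack the dimension hypothesis into a direct-sum statement and then check the simplex sets on both sides agree. The dimension condition
\[
\dim \spn (I_1) + \cdots + \dim \spn (I_k) = \dim \spn (I_1 \cup \cdots \cup I_k)
\]
is equivalent to saying that the sum of subspaces is internal direct, i.e.\ $\spn(I_1\cup\cdots\cup I_k)=\spn(I_1)\oplus\cdots\oplus\spn(I_k)$. As an initial remark, this already forces $I_i\cap I_j=\emptyset$ whenever $i\ne j$, because any common vector $v\ne 0$ would lie in $\spn(I_i)\cap\spn(I_j)=\{0\}$. Thus the disjoint union $I_1\sqcup\cdots\sqcup I_k$ is literally the set-theoretic union, and both complexes in the lemma have the same vertex set.

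Next I would invoke \Cref{GCD}(a): since $\k=\F_p$ is a field, a finite subset of $\F_p^n\setminus\{0\}$ is a simplex of $\mathcal{X}$ if and only if it is linearly independent. So the entire claim reduces to the following purely linear-algebraic statement: for any $\sigma\subseteq I_1\sqcup\cdots\sqcup I_k$, writing $\sigma_j:=\sigma\cap I_j$, the set $\sigma$ is linearly independent in $\F_p^n$ if and only if each $\sigma_j$ is linearly independent in $\spn(I_j)$. The ``only if'' direction is trivial, since any subset of a linearly independent set is linearly independent. The ``if'' direction is the standard fact that a union of linearly independent sets taken from subspaces in direct sum position is itself linearly independent: any dependence relation on $\sigma$ would project, using the direct sum decomposition, to dependence relations on each $\sigma_j$.

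Finally, I would assemble the isomorphism. Recall that the join $\mathcal{X}_{I_1}\ast\cdots\ast\mathcal{X}_{I_k}$ has, by definition, the vertex set $I_1\sqcup\cdots\sqcup I_k$, and its simplices are exactly the sets of the form $\sigma_1\cup\cdots\cup\sigma_k$ with $\sigma_j\in\mathcal{X}_{I_j}$. By the linear-algebraic equivalence above, this is precisely the collection of linearly independent subsets of $I_1\cup\cdots\cup I_k$, which by \Cref{GCD}(a) is exactly the simplex set of the full subcomplex $\mathcal{X}_{I_1\sqcup\cdots\sqcup I_k}$. The identity on vertices thus gives the desired simplicial isomorphism.

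There is no substantive obstacle here; the only thing to be careful about is to state the direct-sum reformulation cleanly and to keep track that ``unimodular'' over a field simply means ``linearly independent,'' so that \Cref{GCD}(a) does all the work and no condition from \Cref{GCD}(b) needs to be checked.
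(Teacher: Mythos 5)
Your proof is correct and follows essentially the same route as the paper's: observe that the dimension hypothesis forces the spans into direct-sum position (hence the $I_j$ are pairwise disjoint), reduce via Proposition~\ref{GCD}(a) to the statement that a subset of $I_1\sqcup\cdots\sqcup I_k$ is linearly independent iff each piece in $I_j$ is, and then match this to the definition of the join. The only cosmetic difference is that you handle general $k$ in one stroke, while the paper proves the case $k=2$ and then inducts.
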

\begin{proof}
Let $k=2$ and denote $I=I_1$ and $J=I_2$.
First note that the given condition is equivalent to $\spn (I) \cap \spn (J) = \{0\}$. We may also conclude that $I$ and $J$ are disjoint.

From the definitions we have that $\mathcal{K}_{I \sqcup J}\subset \mathcal{K}_I \ast \mathcal{K}_J$ for any simplicial complex $\mathcal{K}$ and disjoint $I$ and $J$.
On the other hand, let $\sigma$ be a simplex in $\mathcal{K}_I \ast \mathcal{K}_J$. Then $\sigma \cap I\in \mathcal{K}_I$ and $\sigma \cap J \in \mathcal{K}_J$ are simplices in $\mathcal{K}$ as well, and because of the given condition, they are independent. Therefore, $\sigma = (\sigma \cap I )\cup (\sigma \cap J)$ is a simplex in $\mathcal{K}_{I \sqcup J}$.

For $k>2$, the lemma follows by the induction.
\end{proof}

Using Baskakov's description of the product in the cohomology \cite{Baskakov02}, we are able to calculate the cup length of the cohomology of the moment-angle complex of $\mathcal{X}$ and estimate its Lusternick-Schnirelman category. Recall that the Lusternik-Schnirelmann category (or simply category) $\cat(X)$ of  a topological space $X$ is the smallest integer $k$ such that $X$ admits
a covering by $k + 1$ open sets contractible in $X$. The Lusternik-Schnirelmann category of moment angle complexes was studied by Beben, and Grbi\'{c} in \cite{BebenGrbic3}.

\begin{prop}
\label{cupX2}
For every $n\in\N$ and a prime number $p>2$ the cup length
\[
\cp \widetilde{H}^*(\mathcal{Z}_{X(\F_p^n)};\Z) =n.
\]

\end{prop}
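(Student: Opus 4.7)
The plan is to prove the two inequalities $\cp \widetilde{H}^*(\mathcal{Z}_{X(\F_p^n)};\Z) \leq n$ and $\cp \widetilde{H}^*(\mathcal{Z}_{X(\F_p^n)};\Z) \geq n$ separately, working throughout on the direct sum $\bigoplus_J \widetilde{H}^*(\mathcal{X}_J;\Z)$ via Hochster's formula (\Cref{hoch}) and Baskakov's description of the product.

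For the upper bound, I would take any nonzero product $\alpha_1 \cdots \alpha_k$ of reduced classes and expand each factor into its Hochster components $\alpha_{i,J} \in \widetilde{H}^{d_{i,J}}(\mathcal{X}_J;\Z)$. By Baskakov's formula the product $\alpha_{1,J_1}\cdots \alpha_{k,J_k}$ of components vanishes unless the $J_i$ are pairwise disjoint, and in that case it lies in $\widetilde{H}^{d_1 + \cdots + d_k + k - 1}(\mathcal{X}_{J_1 \cup \cdots \cup J_k};\Z)$, where $d_i := d_{i,J_i} \geq 0$. Since $X(\F_p^n)$ is $(n-1)$-dimensional, so is every full subcomplex, so the cohomology degree cannot exceed $n-1$. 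This forces $d_1 + \cdots + d_k + k - 1 \leq n - 1$, and hence $k \leq n$.

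For the lower bound, I would pick $n$ linearly independent lines $L_1,\ldots,L_n \subset \F_p^n$ (for instance the coordinate lines) and set $J_i := L_i \setminus \{0\}$. Each $\mathcal{X}_{J_i}$ is a discrete set of $p-1$ vertices, since any two nonzero vectors of $L_i$ are linearly dependent. Hence $\widetilde{H}^0(\mathcal{X}_{J_i};\Z) \cong \Z^{p-2}$, which is nonzero precisely because $p > 2$; choose a nonzero $\alpha_i$ there. Because the $L_i$ are linearly independent, \Cref{join} gives the equality
\[
\mathcal{X}_{J_1 \sqcup \cdots \sqcup J_n} = \mathcal{X}_{J_1} * \cdots * \mathcal{X}_{J_n},
\]
so the full subcomplex on $J_1 \cup \cdots \cup J_n$ \emph{is} the join of the $\mathcal{X}_{J_i}$. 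Under Baskakov's identification the product $\alpha_1 \cdots \alpha_n$ then becomes the iterated K\"unneth cross-product $\alpha_1 \otimes \cdots \otimes \alpha_n$ in $\widetilde{H}^{n-1}$ of this join, and is nonzero; this yields $\cp \widetilde{H}^*(\mathcal{Z}_{X(\F_p^n)};\Z) \geq n$.

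The main obstacle will be verifying that Baskakov's product agrees on the nose with the iterated K\"unneth cross-product when \Cref{join} upgrades the inclusion $\mathcal{X}_{J_1 \cup \cdots \cup J_n} \hookrightarrow \mathcal{X}_{J_1} * \cdots * \mathcal{X}_{J_n}$ to an equality: the simplicial map in \Cref{hoch} is then the identity of the join, so the issue reduces to the classical fact that the external product on reduced cohomology of a join of spaces is an isomorphism, but one must be a bit careful with signs and cochain representatives in the Koszul complex $R^*(\mathcal{K})$. The hypothesis $p > 2$ is used only in the lower bound, to guarantee that each $\mathcal{X}_{L_i \setminus \{0\}}$ has nontrivial $\widetilde{H}^0$; for $p = 2$ each $J_i$ collapses to a single vertex and the construction cannot produce a length-$n$ product.
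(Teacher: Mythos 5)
Your proposal is correct and follows essentially the same route as the paper: the upper bound is the identical dimensional argument via Hochster and Baskakov, and the lower bound constructs $n$ disjoint index sets spanning independent lines and appeals to \Cref{join} plus the join/cross-product isomorphism. The only cosmetic difference is that the paper picks the minimal sets $I_j=\{e_j,2e_j\}$ so that each $\mathcal{X}_{I_j}\cong S^0$, whereas you take the full $J_i=L_i\setminus\{0\}$ giving $\widetilde{H}^0\cong\Z^{p-2}$; both are valid and both use $p>2$ in exactly the same way.
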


\begin{proof}
Let $\alpha_1,\ldots,\alpha_k \in \widetilde{H}^*(\mathcal{Z}_\mathcal{X}) $ be such that their product is nonzero. Then, via Hochster's formula there are $I_1,\ldots , I_k\subset \F_p^n\setminus \{0\}$ and $r_1,\ldots , r_k \in \N_0$, such that $\alpha_i \in \widetilde{H}^{r_i}(\mathcal{X}_{I_i})$.

Using Baskakov's description of the cup product, we conclude that $I_i$ are mutually disjoint and that
\[
\alpha_1 \smile \cdots\smile \alpha_k\in  \widetilde{H}^{r_1+\cdots+r_k+k-1} (\mathcal{X}_{I_1\sqcup \dots \sqcup I_k})
\]
is nonzero. From dimensional reasons we have  $r_1+\cdots+r_k+k-1\leq n-1$ and  since $r_i\geq 0$, for all $1\leq i\leq k$, we conclude that $k\leq n$.

For the other inequality, it is sufficient to find $n$ cohomology classes in $\widetilde{H}^*(\mathcal{Z}_\mathcal{X})$ having the nontrivial product. Define $I_j=\{e_{j}, 2 e_{j}\}$, for $1\leq j\leq n$. Obviously, $\mathcal{X}_{I_j}\cong S^0$ for every $j$, so take $\alpha_j\in \widetilde{H}^0(\mathcal{X}_{I_j})$ to be the generator. From Lemma~\ref{join} we have, $$\mathcal{X}_{I_1\sqcup \dots \sqcup I_{n}}\cong \mathcal{X}_{I_1}\ast\dots \ast \mathcal{X}_{I_{n}}\cong S^{n-1}$$ and by Baskakov's formula the product $\alpha_1 \smile \ldots \smile \alpha_{n}$ is the generator of the top cohomology of $S^{n-1}$ and therefore is nonzero.
\end{proof}

\begin{thm}
For every $n\in\N$ and a prime number $p>2$ the Lusternick-Schnirelmann category
\[
\cat (\mathcal{Z}_{X(\F_p^n)}) =n.
\]
\end{thm}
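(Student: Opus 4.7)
The plan is to sandwich $\cat(\mathcal{Z}_{X(\F_p^n)})$ between matching bounds of $n$. The lower bound is essentially free from the preceding cup-length computation, whereas the upper bound requires invoking a general result of Beben and Grbi\'c on the Lusternik--Schnirelmann category of moment angle complexes, cited as \cite{BebenGrbic3}.

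For the lower bound, I will use the classical inequality $\cat(Y) \geq \cp \widetilde{H}^*(Y;\Z)$, valid for any path-connected CW complex $Y$. Combined with \Cref{cupX2}, which asserts $\cp \widetilde{H}^*(\mathcal{Z}_{X(\F_p^n)};\Z) = n$, this immediately gives $\cat(\mathcal{Z}_{X(\F_p^n)}) \geq n$.

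For the upper bound, the plan is to apply the Beben--Grbi\'c estimate $\cat(\mathcal{Z}_\mathcal{K}) \leq \dim \mathcal{K} + 1$. Since the facets of $X(\F_p^n)$ are precisely the bases of $\F_p^n$, each of cardinality $n$, one has $\dim X(\F_p^n) = n-1$, and the estimate yields $\cat(\mathcal{Z}_{X(\F_p^n)}) \leq n$. Combining the two inequalities finishes the argument.

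The main obstacle I anticipate is verifying that the hypotheses under which the Beben--Grbi\'c bound is formulated actually hold for $\mathcal{K} = X(\F_p^n)$. Pureness is automatic since every facet is a basis of $\F_p^n$; shellability was already noted in the introduction; and simple connectivity of $\mathcal{Z}_{X(\F_p^n)}$ follows from the connectivity of $X(\F_p^n)$, itself a consequence of the wedge-of-$(n-1)$-spheres description established earlier (assuming $n \geq 2$; the case $n=1$ is trivial as $\mathcal{Z}_{X(\F_p^1)}$ is a wedge of spheres whose category is easily computed by hand). No additional work should therefore be required.
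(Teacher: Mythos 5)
Your proof is correct and follows exactly the paper's route: the lower bound $\cat(\mathcal{Z}_{X(\F_p^n)}) \geq n$ comes from the classical inequality $\cp \widetilde{H}^*(Y;\Z) \leq \cat(Y)$ together with \Cref{cupX2}, and the upper bound $\cat(\mathcal{Z}_{X(\F_p^n)}) \leq \dim X(\F_p^n) + 1 = n$ is the Beben--Grbi\'c bound from \cite[Theorem~1.9]{BebenGrbic3}. The extra paragraph in which you speculate about which hypotheses of the Beben--Grbi\'c theorem need checking is not part of the paper's argument; it does no harm here, but the theorem you cite does not require shellability or the spheres-wedge description, and for $p>2$ the complex $X(\F_p^n)$ is not $2$-neighborly (a vertex and a scalar multiple of it are not adjacent), which is precisely why one only gets the weaker bound $\dim+1$ here, in contrast to the companion result for $K(\F_p^n)$.
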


\begin{proof} From a classical result we have the inequality  $\cp \widetilde{H}^*(\mathcal{Z}_{\mathcal{X}};\Z) \leq \cat (\mathcal{Z}_\mathcal{X})$.
The opposite inequality $\cat (\mathcal{Z}_\mathcal{X})\leq n$ follows from  \cite[Theorem~1.9]{BebenGrbic3}.
\end{proof}

The same argument based on the join of spheres implies.

\begin{prop}
\label{prodt}
Let $I_1,\ldots ,I_k\subset \F_p^n\setminus \{0\}$ are such that  $\mathcal{X}_{I_j}$ are simplicial spheres, for all $1\leq j\leq k$, and that
\[
\dim \spn (I_1) + \cdots +\dim \spn (I_k) = \dim \spn (I_1 \cup \cdots \cup I_k).
\]
Then the classes in $\widetilde{H}^*(\mathcal{Z}_\mathcal{X})$ which corresponds via Hochster's formula to the generators of the top cohomology of $\mathcal{X}_{I_j}$ have a nontrivial product.
\end{prop}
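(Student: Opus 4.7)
The plan is to reduce the claim to the computation already carried out in Proposition~\ref{cupX2}, with the only difference being that the individual factors are now arbitrary simplicial spheres rather than copies of $S^0$. The input is essentially the join decomposition of Lemma~\ref{join} together with Baskakov's product formula recalled in Theorem~\ref{hoch}.

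Concretely, I would first observe that the dimension hypothesis
\[
\dim\spn(I_1)+\cdots+\dim\spn(I_k)=\dim\spn(I_1\cup\cdots\cup I_k)
\]
forces $\spn(I_i)\cap\spn(I_j)=\{0\}$ for $i\ne j$, which in particular implies that the subsets $I_1,\dots,I_k$ are pairwise disjoint. Hence Lemma~\ref{join} applies and gives a simplicial isomorphism
\[
\mathcal{X}_{I_1\sqcup\cdots\sqcup I_k}\;\cong\;\mathcal{X}_{I_1}\ast\cdots\ast\mathcal{X}_{I_k}.
\]
Writing $d_j:=\dim\mathcal{X}_{I_j}$, so that $\mathcal{X}_{I_j}\simeq S^{d_j}$ by hypothesis and $\alpha_j$ generates $\widetilde{H}^{d_j}(\mathcal{X}_{I_j};\Z)$, the join is a sphere of dimension $d_1+\cdots+d_k+(k-1)$, whose top cohomology is generated by the external (cross) product of the top classes of the factors.

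Next I would invoke Baskakov's description of the cup product from Theorem~\ref{hoch}: under Hochster's decomposition, the product $\alpha_1\smile\cdots\smile\alpha_k$ of the classes corresponding to the $\alpha_j$'s lives in the summand
\[
\widetilde{H}^{\,d_1+\cdots+d_k+k-1}\!\bigl(\mathcal{X}_{I_1\sqcup\cdots\sqcup I_k};\Z\bigr)
\]
of $\widetilde{H}^{*}(\mathcal{Z}_\mathcal{X};\Z)$, and is computed as the image of the external product $\alpha_1\times\cdots\times\alpha_k$ under the simplicial map $\mathcal{X}_{I_1\sqcup\cdots\sqcup I_k}\to \mathcal{X}_{I_1}\ast\cdots\ast\mathcal{X}_{I_k}$. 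By the previous paragraph this map is an isomorphism of complexes, and the external product already generates the top cohomology of the join. Therefore $\alpha_1\smile\cdots\smile\alpha_k$ is a generator of the top cohomology of a sphere, hence nonzero.

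The only point where care is needed is the identification of Baskakov's map with an isomorphism: in general Baskakov's formula only provides a simplicial map $\mathcal{K}_{I\cup J}\to \mathcal{K}_I\ast \mathcal{K}_J$, which may collapse cells. The dimension hypothesis is exactly what promotes this map to an isomorphism, via Lemma~\ref{join}, and this is the heart of the argument; once that is in place the rest is formal. This is a direct generalization of the proof of Proposition~\ref{cupX2}, where each factor $\mathcal{X}_{I_j}$ was a copy of $S^0$.
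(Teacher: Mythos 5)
Your proof is correct and is exactly the argument the paper alludes to when it writes ``the same argument based on the join of spheres implies''; you simply carry out the details, replacing the $S^0$ factors from the proof of Proposition~\ref{cupX2} with arbitrary spheres and using Lemma~\ref{join} to promote Baskakov's map to an isomorphism. No gaps.
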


The proof of Proposition \ref{cupX2} is also valid for $\mathcal{Z}_{\mathcal{K}}$, but with minor technical modifications.

\begin{prop}For every $n\in\N$ and a prime number $p$ the cup length
\[
\cp \widetilde{H}^*(\mathcal{Z}_{K(\F_p^n)};\Z)=\left[\frac{n}{2}\right].
\]
\end{prop}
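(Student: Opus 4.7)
The plan is to imitate the proof of \Cref{cupX2} with $\mathcal{K}=K(\F_p^n)$ in place of $\mathcal{X}$, isolating one structural difference. Since $\F_p$ is a field, every $2$-dimensional subspace of $\F_p^n$ is a direct summand; hence any two distinct lines in $\F_p^n$ span a unimodular plane, so every pair of vertices of $\mathcal{K}$ is joined by an edge. Consequently every nonempty full subcomplex $\mathcal{K}_I$ is connected, which forces $\widetilde{H}^0(\mathcal{K}_I)=0$ for all $I\subset P\F_p^n$. This is the single observation that cuts the cup length in half compared to the $\mathcal{X}$ case.

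For the upper bound, suppose $\alpha_1\smile\cdots\smile\alpha_k\ne 0$ in $\widetilde{H}^*(\mathcal{Z}_\mathcal{K})$. By Hochster's formula each $\alpha_j$ corresponds to a class in $\widetilde{H}^{r_j}(\mathcal{K}_{I_j})$ for some $I_j\subset P\F_p^n$. By Baskakov's formula the sets $I_j$ must be pairwise disjoint and the product lies in $\widetilde{H}^{r_1+\cdots+r_k+k-1}(\mathcal{K}_{I_1\sqcup\cdots\sqcup I_k})$. The connectedness observation forces $r_j\ge 1$ for every $j$, so
\[
2k-1\ \le\ r_1+\cdots+r_k+k-1\ \le\ \dim\mathcal{K}=n-1,
\]
whence $k\le n/2$, i.e.\ $k\le[n/2]$.

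For the lower bound, assume $n\ge 2$ and set $m=[n/2]$. Choose the disjoint $2$-dimensional subspaces $V_j=\spn\{e_{2j-1},e_{2j}\}$ for $1\le j\le m$. Each $V_j$ contains $p+1\ge 3$ lines, so pick any three of them and call the resulting triple $I_j$. Any two of these lines span $V_j$, but no three of them can span a $3$-dimensional subspace; hence $\mathcal{K}_{I_j}$ is the boundary of a $2$-simplex, so $\mathcal{K}_{I_j}\cong S^1$. Since $\spn(I_1)+\cdots+\spn(I_m)=V_1\oplus\cdots\oplus V_m$, the analogue of \Cref{join} for $\mathcal{K}$ (its proof carries over verbatim, because over the field $\F_p$ the sum of direct summands with trivial pairwise intersections is again a direct summand) yields
\[
\mathcal{K}_{I_1\sqcup\cdots\sqcup I_m}\ \cong\ \mathcal{K}_{I_1}\ast\cdots\ast\mathcal{K}_{I_m}\ \cong\ S^{2m-1}.
\]
Applying the analogue of \Cref{prodt} to $\mathcal{K}$ then shows that the product of the generators $\alpha_j\in\widetilde{H}^1(\mathcal{K}_{I_j})$ equals the top class of $\widetilde{H}^{2m-1}(S^{2m-1})$ and hence is nonzero. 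The case $n=1$ is trivial, as $\mathcal{K}$ is then a single vertex and $\mathcal{Z}_\mathcal{K}\simeq\ast$, giving cup length $0=[1/2]$.

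The only genuine novelty relative to \Cref{cupX2} is recognizing that the $S^0$-building blocks $\{e_j,2e_j\}$ used there are unavailable in $\mathcal{K}$ and must be replaced by $S^1$-building blocks coming from triples of lines in a common plane. This doubles the ``cost'' per factor in Baskakov's product and is the sole reason the answer is $[n/2]$ rather than $n$; verifying that $\mathcal{K}_I$ is always connected (so that no degree-zero factors are available to cheat the count) is the step that deserves the most attention.
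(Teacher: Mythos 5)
Your proposal is correct and follows essentially the same route as the paper's proof: you establish the upper bound by combining Baskakov's product formula with the observation that every full subcomplex of $K(\F_p^n)$ is connected (the paper phrases this via $2$-neighborliness), forcing $r_j\ge 1$; and you establish the lower bound by taking three lines in each of the coordinate planes $\spn\{e_{2j-1},e_{2j}\}$ to produce $S^1$ full subcomplexes whose join is a top-dimensional sphere, which is exactly the paper's choice $I_j=\{l(e_{2j-1}),l(e_{2j}),l(e_{2j-1}+e_{2j})\}$ up to the freedom of picking the third line.
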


\begin{proof}
The complex $\mathcal{K}$ is 2-neighborly and for $|I|=2$, $\mathcal{K}_I$ is contractible, and therefore it does not produce a nontrivial cohomology class in $\widetilde{H}^*(\mathcal{Z}_\mathcal{K})$.  Moreover, if for some subset of the vertices $I$ there is a nonzero class $\alpha \in \widetilde{H}^{\ast} (\mathcal{K}_I)$ then it is in degree at least 1.  Let $\alpha_1,\ldots,\alpha_k \in \widetilde{H}^*(\mathcal{Z}_\mathcal{K}) $ be such that their product is nonzero, and let  $I_1,\ldots , I_k\subset \F_p^n\setminus \{0\}$ and $r_1,\ldots , r_k \in \N$ be  such that $\alpha_i \in \widetilde{H}^{r_i}(\mathcal{K}_{I_i})$.

Again, Baskakov's description of the cup product gives that $I_i$ are mutually disjoint and that
\[
\alpha_1 \smile \cdots \smile \alpha_k\in  \widetilde{H}^{r_1+\cdots+r_k+k-1} (\mathcal{K}_{I_1\sqcup \dots \sqcup I_k})
\]
is nonzero. From dimensional reasons we have  $r_1+\cdots+r_k+k-1\leq n-1$ and  since $r_i\geq 1$, for all $1\leq i\leq k$, we conclude that $2k\leq n$.

Now we give $\left[\frac{n}{2}\right]$ cohomology classes in $\widetilde{H}^*(\mathcal{Z}_\mathcal{K})$ having nontrivial product. Let us consider the following sets $I_j=\{l(e_{2 j-1}), l(e_{2j}), l(e_{2 j-1}+ e_{2j})\}$, where $1\leq j\leq \left[\frac{n}{2}\right]$. Obviously, $\mathcal{K}_{I_j}\cong S^1$ for every $j$, so take $\alpha_j\in H^1(\mathcal{K}_{I_j})$ to be the generator. From Corollary~\ref{join} we have, $$\mathcal{K}_{I_1\sqcup \dots \sqcup I_{\left[\frac{n}{2}\right]}}\cong \mathcal{K}_{I_1}\ast\dots \ast \mathcal{K}_{I_{\left[\frac{n}{2}\right]}}\cong S^{2\left[\frac{n}{2}\right]-1}$$ and by Baskakov's formula the product $\alpha_1 \smile \ldots \smile \alpha_{\left[\frac{n}{2}\right]}$ is the generator of the top cohomology of $S^{2\left[\frac{n}{2}\right]-1}$ and therefore is nonzero.
\end{proof}

\begin{thm}
For every $n\in\N$ and a prime number $p$ the Lusternik-Schnirelmann category
\[
\cat (\mathcal{Z}_{K(\F_p^n)})=\left[\frac{n}{2}\right].
\]
\end{thm}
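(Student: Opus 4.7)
The plan is to establish the equality by sandwiching $\cat(\mathcal{Z}_{K(\F_p^n)})$ between the cup length lower bound and an upper bound coming from a previously proven general result about moment angle complexes. The argument is structurally identical to the analogous theorem for $\mathcal{Z}_{X(\F_p^n)}$ proven a few lines above, so I would model it on that one.

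First, I would invoke the classical inequality
\[
\cp\widetilde{H}^*(\mathcal{Z}_{K(\F_p^n)};\Z) \leq \cat(\mathcal{Z}_{K(\F_p^n)})
\]
which holds for any topological space. Combined with the immediately preceding proposition, which computes $\cp\widetilde{H}^*(\mathcal{Z}_{K(\F_p^n)};\Z) = [n/2]$ by exhibiting explicit $S^1$-type full subcomplexes $\mathcal{K}_{I_j}$ on triples $\{l(e_{2j-1}), l(e_{2j}), l(e_{2j-1}+e_{2j})\}$ whose joins realize a nonzero product via Baskakov's formula, this already gives the lower bound $\cat(\mathcal{Z}_{K(\F_p^n)}) \geq [n/2]$.

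For the upper bound $\cat(\mathcal{Z}_{K(\F_p^n)}) \leq [n/2]$, I would appeal to \cite[Theorem~1.9]{BebenGrbic3}, just as was done for $X(\F_p^n)$. That theorem bounds the Lusternik--Schnirelmann category of a moment angle complex $\mathcal{Z}_\mathcal{L}$ in terms of combinatorial data of $\mathcal{L}$; the key input one needs is that $\mathcal{K} = K(\F_p^n)$ is $2$-neighborly (so every pair of vertices spans a contractible full subcomplex, already noted in the proof of the cup-length proposition) and that it is a matroid of rank $n$, so the relevant connectivity/dimension parameters entering the Beben--Grbi\'{c} bound precisely yield $[n/2]$. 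The verification of these hypotheses is essentially bookkeeping using Lemma~\ref{month} and the matroid structure established earlier.

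The main obstacle, and the only nontrivial point beyond citing the lower bound proposition, is matching the numerics of the Beben--Grbi\'{c} upper bound to exactly $[n/2]$ rather than to a weaker constant like $n-1$. Concretely, I would check that for $\mathcal{K} = K(\F_p^n)$ the invariant appearing in \cite[Theorem~1.9]{BebenGrbic3} equals $[n/2]$, which should reduce to noting that any full subcomplex of $K(\F_p^n)$ carrying nontrivial reduced cohomology must already sit inside the span of at least two independent lines (because $\mathcal{K}$ is $2$-neighborly) and then iterating. Once this is in place, the two inequalities combine to give the claimed equality.
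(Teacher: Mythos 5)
Your proposal is correct and follows exactly the paper's two-step argument: cup length gives the lower bound $[n/2]$, and \cite[Theorem~1.9]{BebenGrbic3}, applied using $2$-neighborliness of $K(\F_p^n)$, gives $\cat(\mathcal{Z}_\mathcal{K}) \leq \frac{\dim\mathcal{K}+1}{2} = \frac{n}{2}$, hence $\leq [n/2]$ since $\cat$ is an integer. The only difference is that the paper's invocation of Beben--Grbi\'c is more direct than your last paragraph suggests: once $2$-neighborliness is known, the bound $\frac{\dim\mathcal{K}+1}{2}$ drops out immediately, with no further iteration or cohomological bookkeeping needed.
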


\begin{proof} Because $\cat (\mathcal{Z}_\mathcal{K})\geq \mathrm{cup} \widetilde{H}^*(\mathcal{Z}_\mathcal{K};\Z)=\left[\frac{n}{2}\right]$ it is sufficient to prove that $\cat (\mathcal{Z}_\mathcal{K})\leq\left[\frac{n}{2}\right]$. Since $\mathcal{K}$ is $2$-neighbourly  from \cite[Theorem~1.9]{BebenGrbic3} we have
\[
\cat (\mathcal{Z}_\mathcal{K})\leq \frac{\dim \mathcal {K} +1}{2}=\frac{n}{2}.
\]
\end{proof}

\section{Buchstaber invariant}

Let $\mathcal{K}$ be a simplicial complex on $m$ vertices.
The torus $T^m$ acts naturally coordinatewisely on $\mathcal{Z}_\mathcal{K}$ while $_{\mathbb{R}} \mathcal{Z}_\mathcal{K}$ has the natural coordinatewise action of the real torus $\F_2^m$. The diagonal actions of $S^1$ and $S^0$ on  $\mathcal{Z}_\mathcal{K}$ and $_{\mathbb{R}} \mathcal{Z}_\mathcal{K}$ are free. It is of particular interest to find the maximal dimension of subgroups of $T^m$ and $\F_2^m$ that act freely on $\mathcal{Z}_\mathcal{K}$ and $_{\mathbb{R}} \mathcal{Z}_\mathcal{K}$, respectively.

\begin{defn}\label{bi} A \textit{complex Buchstaber invariant} $s(\mathcal{K})$ of $\mathcal{K}$ is a maximal dimension of a toric subgroup of $T^m$ acting freely on $\mathcal{Z}_\mathcal{K}$.

A \textit{real Buchstaber invariant} $s_{\mathbb{R}} (\mathcal{K})$ of $K$ is a maximal rank of a subgroup of $\F_2^m$ acting freely on $_{\mathbb{R}} \mathcal{Z}_\mathcal{K}$.
\end{defn}

It is an open problem in toric topology to find a combinatorial description of $s(\mathcal{K})$ and $s_{\mathbb{R}} (\mathcal{K})$. Buchstaber's invariants are closely related with universal simplicial complexes $K (\mathbf{\mathbb{Z}}^n)$ and $K (\mathbf{\mathbb{F}}_2^n)$. Following~\cite[Section 2]{Ayze1}, we recall a few of the properties needed for our work.

\begin{prop}\label{bin}
Let $\mathcal{K}$ be a simplicial complex on $m$ vertices.
A \textit{complex Buchstaber invariant}  of $\mathcal{K}$ is the integer $s(\mathcal{K})=m-r$, where $r$ is the least integer such that there is a nondegenerate simplicial map
\[
f\colon \mathcal{K} \rightarrow K
	(\mathbf{\mathbb{Z}}^r) .
	\] 	
A \textit{real Buchstaber invariant}  of $\mathcal{K}$ is the integer $s_{\mathbb{R}} (\mathcal{K})=m-r$, where $r$ is the least integer such that there is a nondegenerate simplicial map
	$$ f\colon \mathcal{K} \rightarrow K (\F_2^r) .
	$$\qed
\end{prop}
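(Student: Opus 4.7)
The plan is to establish a dictionary between $(m-r)$-dimensional toric subgroups $H \subset T^m$ acting freely on $\mathcal{Z}_\mathcal{K}$ and nondegenerate simplicial maps $\bar f : \mathcal{K} \to K(\Z^r)$, so that maximising $m-r$ corresponds to minimising $r$. I outline the complex case; the real case is formally identical with $\Z$ replaced by $\F_2$ everywhere, and is technically simpler because every $\F_2$-subspace is automatically a direct summand.

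First I would set up the character-lattice side. A rank-$(m-r)$ toric subgroup $H \subseteq T^m$ corresponds bijectively to a surjection $\pi : \Z^m \twoheadrightarrow \Z^r$, whose kernel is the rank-$(m-r)$ direct summand of $\Z^m$ associated to $H$. Writing $\lambda_i := \pi(e_i) \in \Z^r$, the datum of $H$ is precisely an $m$-tuple $(\lambda_1, \ldots, \lambda_m)$ in $\Z^r$ that generates $\Z^r$ as an abelian group.

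Next I would carry out the stabiliser analysis. A standard fact about $\mathcal{Z}_\mathcal{K}$ is that every $T^m$-stabiliser is a coordinate subtorus $T^\sigma := \{t \in T^m : t_i = 1 \text{ for } i \notin \sigma\}$ for a unique $\sigma \in \mathcal{K}$, and every $\sigma \in \mathcal{K}$ occurs. Thus $H$ acts freely if and only if $H \cap T^\sigma = \{1\}$ for every $\sigma \in \mathcal{K}$, equivalently if and only if each composite $T^\sigma \hookrightarrow T^m \twoheadrightarrow T^r$ is injective. Passing back through the lattices, this translates precisely to $\{\lambda_i : i \in \sigma\}$ being a unimodular subset of $\Z^r$ for every $\sigma \in \mathcal{K}$: linear independence of the $\lambda_i$'s secures injectivity on Lie algebras, and the direct-summand property of their span eliminates any residual finite kernel.

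Finally I would glue in the universal complex. Sending $v_i \mapsto [\lambda_i]$ (the line through $\lambda_i$) defines a simplicial map $\bar f : \mathcal{K} \to K(\Z^r)$; by Proposition~\ref{GCD}(c) this is nondegenerate exactly under the simplex-wise unimodularity condition derived above. One direction of the proposition is then immediate: a free subtorus of dimension $m - r$ produces a nondegenerate simplicial map into $K(\Z^r)$, giving $r_{\min} \leq m - s(\mathcal{K})$. For the converse, starting from a nondegenerate $\bar f : \mathcal{K} \to K(\Z^{r_{\min}})$, I would lift each line to its generator $\lambda_i$ using the canonical section $\xi$ from Corollary~\ref{nondeg}, assemble the homomorphism $\pi : \Z^m \to \Z^{r_{\min}}$, and take $H := \ker(\pi \otimes_\Z S^1)$. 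The main obstacle is verifying that $\pi$ is surjective at minimal $r_{\min}$, so that $H$ is genuinely a subtorus of the claimed dimension $m - r_{\min}$: if $\pi(\Z^m)$ were contained in a proper direct summand $L \subsetneq \Z^{r_{\min}}$, then using the elementary lemma that a direct summand of $\Z^{r_{\min}}$ contained in another direct summand $L$ remains a direct summand of $L$, the simplex-wise unimodularity would persist inside $L$, so $\bar f$ would factor as a nondegenerate simplicial map $\mathcal{K} \to K(L) \cong K(\Z^{r_{\min}-1})$, contradicting the minimality of $r_{\min}$.
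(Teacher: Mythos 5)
The paper does not actually prove this proposition: it is stated with a \qed and explicitly recalled from Ayzenberg's work \cite[Section~2]{Ayze1}, so there is no proof in the source to compare against. Your argument is the standard dictionary argument and is essentially correct; the key reductions (stabilizers are the coordinate subtori $T^\sigma$ for $\sigma\in\mathcal{K}$, translation to the character lattice, and the identification of freeness with simplex-wise unimodularity via the direct-summand condition killing the finite part of the kernel) are all sound, and your use of Proposition~\ref{GCD}(c) and Corollary~\ref{nondeg} to pass between lines and primitive vectors is exactly what is needed.

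Two technical points in the converse direction deserve a tightening, though neither is a fatal gap. First, $\ker(\pi\otimes_\Z S^1)$ need not be connected: if $\pi(\Z^m)$ has finite index $k>1$ in $\Z^{r_{\min}}$, this kernel has $k$ components, so it is not itself a subtorus; you should take its identity component $(\ker\pi)\otimes_\Z S^1$, which is the torus associated to the direct summand $\ker\pi\subseteq\Z^m$. Second, your minimality argument rules out the case $\operatorname{rank}\pi(\Z^m)<r_{\min}$ (where $\pi(\Z^m)$ sits inside a proper direct summand, so that $\bar f$ factors through a smaller $K(\Z^{r'})$), but it does not literally establish surjectivity of $\pi$, since $\pi(\Z^m)$ can have full rank yet finite index. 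That case is harmless: $\ker\pi$ still has rank $m-r_{\min}$, and your ``elementary lemma'' (a direct summand of $\Z^{r_{\min}}$ contained in a subgroup $M$ is automatically a direct summand of $M$, because $M/L\hookrightarrow \Z^{r_{\min}}/L$ is torsion-free) shows the action of $(\ker\pi)\otimes S^1$ remains free. So the correct conclusion to extract from the minimality step is that $\pi(\Z^m)$ has full rank, which is all you need for the dimension count; full surjectivity is not required.
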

Recall from the section 2 that there are nondegenerate maps $\phi \colon \xkn \lra \kkn$ and $\xi \colon \kkn \lra \xkn$. Thus $X(\Z^r)$ can be used instead of $K(\Z^r)$ in Proposition~\ref{bin} to compute a complex Buchstaber invariant. Note that for the real case  $X(\F_2^r)=K(\F_2^r)$.

Finding explicit values of Buchstaber invariants for a given simplicial complex $\mathcal{K}$ is an important open problem in toric topology and in general, only some estimates of these invariants are known. A sequence $\{L^i\}_{i\in\N}$ of simplicial complexes is called an \textit{increasing sequence} if for every $i$ and $j$ such that $i<j$ there is a nondegenerate simplicial map $f\colon L^i\rightarrow L^j$. Observe that Proposition~\ref{bin} can be used to define the Buchstaber invariant $s_{{\{L^n\}}} (\mathcal{K})$
for any increasing sequence of simplicial complexes  $\{L^n\}$ instead of $\{K(\F_2^n)\}$ or $\{K({\mathbb{Z}}^n)\}$. For the sequence of standard simplices $\{\Delta^i\}$, the Buchstaber invariants are $m-\gamma(\mathcal{K})$, where $\gamma (\mathcal{K})$ is the classical chromatic number of a simplicial complex. In particular, for a prime number $p$, the family $\{K(\F_p^n)\}$ can be considered, which allows the following definition.
\begin{defn}
Let $\mathcal{K}$ be a simplicial complex on $m$ vertices and $p$ a prime number. A \emph{mod $p$ Buchstaber invariant} $s_{p}(\mathcal{K})$ is given by
\[
s_{p}(\mathcal{K})=m-r
\]
where $r$ is the least integer such that there is a nondegenerate simplicial map $f\colon\mathcal{K} \rightarrow K(\F_p^r)$.
\end{defn}

\begin{rem} Note that $s_{\mathbb{R}} (\mathcal{K})$ is mod $2$ Buchstaber invariant, that is, $s_{2} (\mathcal{K})$ in this notation.
\end{rem}

\begin{rem}
\label{ref2}
A topological description of $s_p (\mathcal{K})$ can be given as a maximal rank of subgroup of $\Z_p^m$  acting freely on  the polyhedral product $\mathcal{Z}_\mathcal{K} (\mathrm{Cone}( \Z_p), \Z_p)\subset D^{2m}$, where $\Z_p$ is considered as the set of $p$-th roots of unity. Unlike in the cases of $\F_2^n$ and $\mathbb{Z}$, $\mathcal{Z}_\mathcal{K} (\mathrm{Cone}( \Z_p), \Z_p)$ for $p>2$ fails to be a manifold when $K$ is a triangulation of a sphere.
\end{rem}

Ayzenberg in \cite {Ayze} and Erokhovets in \cite{MR3482595} studied nondegenerate maps between various increasing sequences of simplicial complexes to establish estimates of Buchstaber invariants in terms of combinatorial invariants of simplicial complexes. They obtained two interesting results.

\begin{prop}\cite[Proposition 2]{Ayze1} \label{ay1} Let $\{L^i\}$ and $\{M^i\}$ be two increasing sequences of simplicial complexes such that for each $i$, there is a nondegenerate map $f_i\colon L^i\rightarrow M^i$. Then $$s_{\{L^i\}} (\mathcal{K})\leq s_{\{M^i\}} (\mathcal{K}).$$\qed
\end{prop}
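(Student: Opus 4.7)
The plan is to unwind the definition of the Buchstaber invariant associated with an increasing sequence and reduce the inequality to a direct composition argument. Recall that for any increasing sequence $\{L^i\}$, the invariant $s_{\{L^i\}}(\mathcal{K})$ equals $m-r$, where $r$ is the smallest positive integer admitting a nondegenerate simplicial map $\mathcal{K}\to L^r$. Hence the asserted inequality $s_{\{L^i\}}(\mathcal{K})\le s_{\{M^i\}}(\mathcal{K})$ is equivalent to $r_L\ge r_M$, where $r_L$ and $r_M$ are the minimal indices realizing $\mathcal{K}$ nondegenerately in the two sequences.

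First, I would set $r:=r_M$, so by definition there exists a nondegenerate simplicial map $g\colon \mathcal{K}\to M^{r}$. The hypothesis supplies a nondegenerate simplicial map $f_r\colon L^r\to M^r$. Rather than using this directly, I would instead run the argument starting from $r_L$: pick a nondegenerate simplicial map $h\colon\mathcal{K}\to L^{r_L}$, which exists by definition of $r_L$, and form the composite
\[
f_{r_L}\circ h\colon \mathcal{K}\longrightarrow L^{r_L}\longrightarrow M^{r_L}.
\]
The central step is verifying that this composite is again nondegenerate. Recall that a simplicial map is nondegenerate precisely when it is injective on the vertex set of every simplex. Given a simplex $\sigma$ of $\mathcal{K}$, nondegeneracy of $h$ ensures that $h|_\sigma$ is injective and that $h(\sigma)$ is a simplex of $L^{r_L}$ of the same dimension as $\sigma$; then nondegeneracy of $f_{r_L}$ applied to the simplex $h(\sigma)$ ensures $f_{r_L}|_{h(\sigma)}$ is injective. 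Composing two injections yields an injection, so $(f_{r_L}\circ h)|_\sigma$ is injective and $(f_{r_L}\circ h)(\sigma)$ is a simplex of $M^{r_L}$ of dimension $\dim\sigma$. Therefore $f_{r_L}\circ h$ is a nondegenerate simplicial map $\mathcal{K}\to M^{r_L}$.

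By the minimality in the definition of $r_M$, this forces $r_M\le r_L$, and consequently
\[
s_{\{L^i\}}(\mathcal{K})=m-r_L\le m-r_M=s_{\{M^i\}}(\mathcal{K}),
\]
as required. The argument has no real obstacle: the only thing to check carefully is the (routine) fact that a composition of nondegenerate simplicial maps is nondegenerate, which follows immediately from the simplex-wise injectivity characterization. The assumption that $\{L^i\}$ and $\{M^i\}$ are increasing sequences is not needed for the inequality itself; it is only used implicitly to guarantee that the invariants $s_{\{L^i\}}(\mathcal{K})$ and $s_{\{M^i\}}(\mathcal{K})$ are well defined in the sense of \Cref{bin}.
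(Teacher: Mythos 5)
Your proof is correct. The paper itself does not supply an argument for this proposition—it is imported verbatim from Ayzenberg's work (the \verb|\qed| appears without a proof body)—so there is no in-paper proof to compare against, but the composition argument you give is the natural one and is surely the one Ayzenberg has in mind. The key step, that a composite of nondegenerate simplicial maps is nondegenerate, is verified correctly via simplex-wise injectivity, and the passage from minimality of $r_M$ to $r_M\le r_L$ and hence to the inequality of $s$-invariants is sound. One cosmetic remark: the opening paragraph sets $r:=r_M$ and introduces a map $g\colon\mathcal{K}\to M^r$ that is never used; the argument only needs $h\colon\mathcal{K}\to L^{r_L}$ composed with $f_{r_L}$, so that false start could simply be deleted. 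Your closing observation is also accurate: the increasing-sequence hypothesis plays no role in the composition step itself and is only needed so that the invariants $s_{\{L^i\}}$ and $s_{\{M^i\}}$ are well defined via Proposition~\ref{bin}.
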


\begin{prop}\cite[Proposition 4]{Ayze1} \label{ay2} Let $\{L^i\}$ be an increasing sequence of simplicial complexes and let $\mathcal{K}_1$ and $\mathcal{K}_2$ be simplicial complexes on vertex sets $[m_1]$ and $[m_2]$, respectively. Suppose that there exists a nondegenerate map $f\colon \mathcal{K}_1 \rightarrow \mathcal{K}_2$. The following holds $$s_{\{L^i\}} (\mathcal{K}_1)-s_{\{L^i\}} (\mathcal{K}_2)\geq m_1-m_2.$$\qed
\end{prop}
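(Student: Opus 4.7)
The plan is to use the definition of $s_{\{L^i\}}$ via nondegenerate maps directly, together with the elementary fact that nondegenerate simplicial maps compose to nondegenerate maps. Let me denote $r_i := m_i - s_{\{L^i\}}(\mathcal{K}_i)$ for $i=1,2$, so by Proposition~\ref{bin} (extended to an arbitrary increasing sequence), $r_i$ is the least integer for which there exists a nondegenerate simplicial map $\mathcal{K}_i \to L^{r_i}$.

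First I would fix a nondegenerate simplicial map $g\colon \mathcal{K}_2 \to L^{r_2}$ guaranteed by the definition of $r_2$. Using the given nondegenerate map $f\colon \mathcal{K}_1 \to \mathcal{K}_2$, I would form the composition $g\circ f\colon \mathcal{K}_1 \to L^{r_2}$ and observe that it is a nondegenerate simplicial map: if $\sigma$ is a $k$-simplex of $\mathcal{K}_1$, then $f(\sigma)$ is a $k$-simplex of $\mathcal{K}_2$ by nondegeneracy of $f$, and in turn $g(f(\sigma))$ is a $k$-simplex of $L^{r_2}$ by nondegeneracy of $g$. Therefore $\mathcal{K}_1$ admits a nondegenerate simplicial map to $L^{r_2}$, so by minimality of $r_1$ we obtain $r_1 \leq r_2$.

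Translating this back in terms of the Buchstaber invariants:
\[
s_{\{L^i\}}(\mathcal{K}_1) - s_{\{L^i\}}(\mathcal{K}_2) = (m_1 - r_1) - (m_2 - r_2) = (m_1-m_2) + (r_2 - r_1) \geq m_1 - m_2,
\]
which is exactly the desired inequality.

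There is no real obstacle here; the only point that needs any verification is the stability of nondegeneracy under composition, which follows immediately from the fact that the vertex sets of simplices are preserved in cardinality by nondegenerate maps. The role of the increasing sequence hypothesis is only implicit, to ensure that the set of admissible $r$ values for each $\mathcal{K}_i$ is a nonempty upward-closed subset of $\mathbb{N}$ so that a minimum $r_i$ exists.
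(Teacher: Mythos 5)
Your argument is correct. Note that the paper itself offers no proof of this statement; it is quoted from Ayzenberg (\cite[Proposition~4]{Ayze1}) with a \qed, so there is no in-paper proof to compare against. Your proof is the standard one: compose a nondegenerate map $\mathcal{K}_2\to L^{r_2}$ realizing the minimum for $\mathcal{K}_2$ with the given nondegenerate $f\colon\mathcal{K}_1\to\mathcal{K}_2$, observe that nondegeneracy is preserved under composition, conclude $r_1\le r_2$, and translate back via $s_{\{L^i\}}(\mathcal{K}_i)=m_i-r_i$. This is exactly the argument used in Ayzenberg's source. One small remark: the increasing-sequence hypothesis gives upward-closedness of the admissible set of $r$'s (by composing with the nondegenerate maps $L^r\to L^{r'}$), but it does not by itself guarantee nonemptiness of that set; nonemptiness is part of the implicit assumption that $s_{\{L^i\}}$ is defined for the complexes in question, and in fact your proof does not use upward-closedness at all.
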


Recall that the \textit{chromatic number} of a simplicial complex $\mathcal{K}$ is the minimal number $\gamma (\mathcal{K})$ such that there exists a nondegenerate map $f \colon \mathcal{K} \rightarrow \Delta^{\gamma (\mathcal{K})-1}$.

\begin{cor}\label{cor5.8}
Let $\mathcal{K}$ be a simplicial complex on $m$ vertices. The following inequalities hold
\begin{equation}
\label{in:pg}
m-\gamma (\mathcal{K}) \leq s (\mathcal{K})\leq s_{p} (\mathcal{K})\leq m-\dim{\mathcal{K}}-1.
\end{equation}
 \end{cor}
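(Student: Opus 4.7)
The proof splits into the three inequalities of \eqref{in:pg}, handled separately. My plan is to obtain the outer two by applying \Cref{ay1} to suitable pairs of increasing sequences of simplicial complexes, and the rightmost by a dimensional obstruction.

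For $m - \gamma(\mathcal{K}) \leq s(\mathcal{K})$, I would apply \Cref{ay1} to the sequences $\{\Delta^{i-1}\}$ of simplices on $i$ vertices and $\{K(\Z^i)\}$. The nondegenerate map $\Delta^{r-1}\to K(\Z^r)$ needed for each $r$ is the one that sends the $r$ vertices of $\Delta^{r-1}$ to the coordinate lines $l(e_1),\ldots,l(e_r)$ of $\Z^r$; these form a maximal simplex of $K(\Z^r)$ since $\{e_1,\ldots,e_r\}$ is the standard basis. Combined with the identification $s_{\{\Delta^{i-1}\}}(\mathcal{K}) = m-\gamma(\mathcal{K})$ noted in the paragraph preceding the corollary, this yields the first inequality.

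For $s(\mathcal{K})\leq s_p(\mathcal{K})$, I would again use \Cref{ay1}, this time with $L^i = K(\Z^i)$ and $M^i = K(\F_p^i)$. The natural nondegenerate map between them is reduction modulo $p$: a line $\ell = \Z\cdot\xi(\ell)$ in $\Z^i$ is sent to the line $\F_p\cdot\overline{\xi(\ell)}$ in $\F_p^i$. It is well-defined on lines because $\pm\xi(\ell)$ reduce to the same $1$-dimensional $\F_p$-subspace, and nondegenerate because the reduction of a $\Z$-basis of $\Z^i$ modulo $p$ is an $\F_p$-basis of $\F_p^i$; hence a unimodular tuple in $\Z^i$ reduces to a unimodular tuple in $\F_p^i$, so simplices map to simplices of the same dimension.

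For the final inequality $s_p(\mathcal{K})\leq m-\dim\mathcal{K}-1$, the argument is a direct dimension count: any nondegenerate simplicial map $f\colon\mathcal{K}\to K(\F_p^r)$ carries a top-dimensional simplex of $\mathcal{K}$ to a simplex of the same dimension in $K(\F_p^r)$, so $\dim K(\F_p^r)=r-1\geq\dim\mathcal{K}$, forcing $r\geq\dim\mathcal{K}+1$ and hence $s_p(\mathcal{K})=m-r\leq m-\dim\mathcal{K}-1$ via \Cref{bin}. There is no genuine obstacle in any of the three steps; the only point requiring care is the nondegeneracy of the mod-$p$ reduction, which amounts to the standard observation that a unimodular integer $n\times n$ matrix remains invertible after reduction modulo $p$.
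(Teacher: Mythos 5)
Your proof is correct and takes essentially the same route as the paper: the first two inequalities follow from \Cref{ay1} applied to the inclusion $\Delta^{r-1}\hookrightarrow K(\Z^r)$ of the coordinate simplex and to the mod-$p$ reduction $K(\Z^r)\to K(\F_p^r)$, exactly as in the paper. You are, if anything, more explicit than the paper, which does not spell out the dimensional obstruction behind the rightmost inequality $s_p(\mathcal{K})\le m-\dim\mathcal{K}-1$ (the paper's proof ends with ``the statement now follows by Proposition~\ref{ay1}'' and leaves that final count implicit), whereas you give it cleanly.
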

\begin{proof}
There is an inclusion of $\Delta^{n-1}$ into $K({\mathbb{Z}}^n)$ and a modulo $p$ reduction map from
$K(\mathbb{Z}^n)$ to $K(\F_p^n)$ which is nondegenerate for any prime number $p$. The statement now follows by Proposition~\ref{ay1}.
\end{proof}

\begin{prop}
For every simplicial complex $\mathcal{K}$, there is an infinite set of prime numbers denoted by $P (\mathcal{K})$ such that for all $p$, $q\in P (\mathcal{K})$ $$s_{p} (\mathcal{K})=s_{q} (\mathcal{K}).$$
\end{prop}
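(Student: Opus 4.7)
The plan is to reduce the statement to a pigeonhole argument using the uniform (in $p$) bounds on $s_p(\mathcal{K})$ established immediately above.

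First I would invoke Corollary~\ref{cor5.8}, which tells us that for every prime $p$ we have
\[
m - \gamma(\mathcal{K}) \;\leq\; s(\mathcal{K}) \;\leq\; s_p(\mathcal{K}) \;\leq\; m - \dim \mathcal{K} - 1.
\]
The crucial point is that both the upper and lower bounds depend only on the combinatorial data of $\mathcal{K}$ (its number of vertices, chromatic number, and dimension) and are \emph{independent of the prime $p$}. Consequently, as $p$ ranges over all primes, $s_p(\mathcal{K})$ takes values in the finite set of integers
\[
I(\mathcal{K}) \;:=\; \{\, s(\mathcal{K}),\, s(\mathcal{K})+1,\, \ldots,\, m - \dim\mathcal{K} - 1 \,\}.
\]

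Second, since the set of primes is infinite and $I(\mathcal{K})$ is finite, the pigeonhole principle guarantees the existence of some integer $v \in I(\mathcal{K})$ such that $s_p(\mathcal{K}) = v$ for infinitely many primes $p$. Setting
\[
P(\mathcal{K}) \;:=\; \{\, p \text{ prime} : s_p(\mathcal{K}) = v\,\}
\]
produces the desired infinite set of primes on which $s_p(\mathcal{K})$ is constant, establishing the proposition.

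There is essentially no obstacle in this argument: the substantive content was already packaged into Corollary~\ref{cor5.8}, and the present statement is a soft consequence of having a prime-independent upper bound. The only subtle point worth mentioning explicitly in the write-up is why the bound $s_p(\mathcal{K}) \leq m - \dim\mathcal{K} - 1$ does not degrade with $p$; this follows because the inclusion $\partial\Delta^{\dim\mathcal{K}}\hookrightarrow K(\F_p^{\dim\mathcal{K}+1})$ (or the chain of nondegenerate maps giving Corollary~\ref{cor5.8}) exists for every prime $p$, uniformly.
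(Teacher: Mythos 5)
Your proof is correct and is essentially the same argument as the paper's: both invoke the uniform bounds of Corollary~\ref{cor5.8} (relation~\eqref{in:pg}) to confine $s_p(\mathcal{K})$ to a finite, $p$-independent set of integers, then apply the pigeonhole principle over the infinite set of primes.
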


\begin{proof}
Using the Pigeonhole principle, relation~\eqref{in:pg} implies the claim since the set of possible values of $s_{p} (\mathcal{K})$ is a finite set, which contradicts the fact that the set of prime numbers is a countable infinite set.
\end{proof}
As an immediate corollary of Propositions \ref{ay1} and \ref{ay2}, in the same fashion as it was done for $p=2$ in \cite{Ayze1}, the following result holds.

\begin{cor} Let $\mathcal{K}$ be a simplicial complex on vertex set $[m]$ and $p$ a prime number. Then the following inequality holds
	\begin{equation}
s (\mathcal{K})\leq s_{p} (\mathcal{K})\leq m-\lceil \log_p \left((p-1)\gamma (\mathcal{K})+1 \right)\rceil.
	\end{equation}
\end{cor}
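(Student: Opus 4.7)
The first inequality $s(\mathcal{K})\leq s_p(\mathcal{K})$ is already contained in Corollary~\ref{cor5.8}; it follows from Proposition~\ref{ay1} applied to the increasing sequences $\{K(\Z^i)\}$ and $\{K(\F_p^i)\}$ via the termwise mod-$p$ reduction, which is nondegenerate because a $\Z$-basis reduces to an $\F_p$-basis. So the substantive task is the upper bound on $s_p(\mathcal{K})$.

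My plan for the upper bound is to compare the sequence $\{K(\F_p^i)\}$ against a suitable subsequence of standard simplices and invoke Proposition~\ref{ay1} again. Set $N_i:=\tfrac{p^i-1}{p-1}$, which by Lemma~\ref{month2} is exactly the number of vertices of $K(\F_p^i)$. Because every subset of the vertex set of $\Delta^{N_i-1}$ is a face, any bijection of vertices extends to a nondegenerate simplicial map $K(\F_p^i)\to\Delta^{N_i-1}$. The sequence $\{\Delta^{N_i-1}\}$ is clearly increasing (simplex inclusions are nondegenerate), so Proposition~\ref{ay1} gives
\[
s_p(\mathcal{K})=s_{\{K(\F_p^i)\}}(\mathcal{K})\ \leq\ s_{\{\Delta^{N_i-1}\}}(\mathcal{K}).
\]

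It then remains to identify the right-hand side. A nondegenerate simplicial map $\mathcal{K}\to\Delta^{N_r-1}$ is exactly a proper $N_r$-coloring of $\mathcal{K}$, so one exists iff $\gamma(\mathcal{K})\leq N_r$, i.e.\ iff $(p-1)\gamma(\mathcal{K})+1\leq p^r$. The smallest admissible $r$ is therefore $r_0=\lceil\log_p((p-1)\gamma(\mathcal{K})+1)\rceil$, yielding $s_{\{\Delta^{N_i-1}\}}(\mathcal{K})=m-r_0$ and hence the claimed bound. The main point requiring a sanity check is that the comparison sequence $\{\Delta^{N_i-1}\}$ correctly captures the chromatic-number threshold (straightforward, since nondegeneracy of a map into a simplex is literally the coloring condition); the key quantitative input is the vertex count $N_i=(p^i-1)/(p-1)$ from Lemma~\ref{month2}, and no serious obstacle should arise.
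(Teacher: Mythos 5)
Your proof is correct and follows essentially the same route as the paper: the key step in both is the nondegenerate map from $K(\F_p^r)$ into the simplex on its $\tfrac{p^r-1}{p-1}$ vertices, combined with the characterization of nondegenerate maps into simplices as proper colorings. The only cosmetic difference is that you package the comparison through Proposition~\ref{ay1} (which the paper already cites as the source of this corollary), whereas the paper's printed proof unwinds that comparison by hand.
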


\begin{proof} The second inequality only needs to be proved. Let $r$ be a minimal integer such that there is a nondegenerate map $f\colon \mathcal{K}\rightarrow K(\F_p^r)$. There is a nondegenerate inclusion $e \colon K(\F_p^r)\rightarrow \Delta^{\frac{p^r-1}{p-1}-1}$ as $ K(\F_p^r)$ has $\frac{p^r-1}{p-1}$ vertices. Thus, the composition $ e\circ  f$ is a nondegenerate map from  $\mathcal{K}$ to $\Delta^{\frac{p^r-1}{p-1}-1}$. However, $\gamma (\mathcal{K})-1$ is the minimal dimension of a simplex into which $\mathcal{K}$ can be mapped nonregenerative so
\[
\gamma (\mathcal{K})\leq {\frac{p^r-1}{p-1}}.
\]
The desired inequality easily follows from the inequality above.
\end{proof}

Using the same argument with appropriate modifications as in Ayzenberg's proof of \cite[Theorem~1]{Ayze1}, we deduce the following result.

\begin{prop}
\label{prop}
For any simple graph $\Gamma$,
$$
s_{p} (\Gamma)=m-\left \lceil \log_p ((p-1) \gamma (\Gamma)+1)\right \rceil.
$$\qed
\end{prop}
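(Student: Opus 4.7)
The plan is to exploit the fact that $\Gamma$ is one-dimensional, which makes nondegenerate simplicial maps $\Gamma\to K(\F_p^r)$ equivalent to proper vertex colorings. The upper bound
\[
s_p(\Gamma)\leq m-\lceil \log_p((p-1)\gamma(\Gamma)+1)\rceil
\]
is already given by the previous corollary, so the content of the proposition is the matching lower bound, which I would establish by an explicit construction.

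The key combinatorial observation is that the $1$-skeleton of $K(\F_p^r)$ is the complete graph on the $\tfrac{p^r-1}{p-1}$ lines of $\F_p^r$: any two distinct lines $l,l'\in P\F_p^r$ span a $2$-dimensional subspace, which is automatically a unimodular subspace of $\F_p^r$, so $\{l,l'\}$ is a $1$-simplex of $K(\F_p^r)$. Because $\Gamma$ has dimension $1$, the nondegeneracy of a simplicial map $f\colon \Gamma\to K(\F_p^r)$ just requires that $f(v)\neq f(w)$ whenever $vw$ is an edge. Hence such an $f$ exists if and only if $\Gamma$ admits a proper vertex coloring using at most $\tfrac{p^r-1}{p-1}$ colors, i.e.\ $\gamma(\Gamma)\leq\tfrac{p^r-1}{p-1}$.

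With this in hand, I would set $r=\lceil \log_p((p-1)\gamma(\Gamma)+1)\rceil$, which is precisely the smallest integer satisfying $\gamma(\Gamma)\leq \tfrac{p^r-1}{p-1}$. Choose a proper coloring of $\Gamma$ with $\gamma(\Gamma)$ colors and assign to the $i$th color class a distinct line $l_i\in P\F_p^r$ (possible since there are enough lines). The resulting vertex map extends to a nondegenerate simplicial map $\Gamma\to K(\F_p^r)$, since any edge is sent to a pair of distinct lines and hence to a genuine $1$-simplex. By Proposition~\ref{bin} this yields $s_p(\Gamma)\geq m-r$, matching the upper bound.

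There is essentially no hard step here: once one recognizes that the $1$-skeleton of $K(\F_p^r)$ is complete on its vertex set, the problem collapses to the definition of the chromatic number, and the threshold $\lceil \log_p((p-1)\gamma(\Gamma)+1)\rceil$ is exactly the arithmetic condition $\gamma(\Gamma)\leq |{P\F_p^r}|$. The only small subtlety is to check that the inequality $\gamma(\Gamma)\leq\tfrac{p^r-1}{p-1}$ is tight at $r=\lceil \log_p((p-1)\gamma(\Gamma)+1)\rceil$, which is an immediate manipulation of the $\lceil\,\rceil$ function.
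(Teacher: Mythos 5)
Your argument is correct and is essentially the same as the one the paper points to: the paper defers to Ayzenberg's $p=2$ argument (citing \cite{Ayze1}), and the explicit coloring construction you give is what the authors themselves spell out in the paragraph following Corollary~\ref{graf}, namely that the $1$-skeleton of $K(\F_p^r)$ is a complete graph on $\frac{p^r-1}{p-1}$ vertices, so that for a graph a nondegenerate simplicial map into $K(\F_p^r)$ is exactly a proper coloring by at most $\frac{p^r-1}{p-1}$ colors. You are also right that the matching upper bound is already supplied by the preceding corollary, so only the lower bound needs a construction.
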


\begin{cor}\label{graf}
Let $\Gamma$ be a simple graph on $m$ vertices and $p$ a prime number. Let $k$ be a natural number such that
\[
p^{k-2}+p^{k-3}+\ldots +1<\gamma (\Gamma) \leq p^{k-1}+p^{k-2}+\ldots +1.
\]
Then
\[
s_{p}(\Gamma)=m-k.
\]

In particular, if $\Gamma$ is not a discrete graph and $\gamma (\Gamma )\leq p+1$ then $s_{p}(\Gamma)=m-2. $
\end{cor}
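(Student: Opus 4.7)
The plan is to derive this corollary as a direct consequence of \Cref{prop}, which gives the closed-form expression $s_p(\Gamma) = m - \lceil \log_p((p-1)\gamma(\Gamma)+1)\rceil$. All that remains is to verify that the hypothesis on $\gamma(\Gamma)$ translates, under the ceiling-logarithm, precisely into the value $k$.

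First I would rewrite the hypothesis using the geometric sum identity $1 + p + \cdots + p^{j-1} = \frac{p^j - 1}{p-1}$. The assumed inequalities
\[
p^{k-2}+p^{k-3}+\cdots +1 < \gamma(\Gamma) \leq p^{k-1}+p^{k-2}+\cdots +1
\]
become
\[
\frac{p^{k-1}-1}{p-1} < \gamma(\Gamma) \leq \frac{p^k-1}{p-1}.
\]
Multiplying through by $p-1 > 0$ and adding one gives the clean form $p^{k-1} < (p-1)\gamma(\Gamma)+1 \leq p^k$. Taking $\log_p$ yields $k-1 < \log_p((p-1)\gamma(\Gamma)+1) \leq k$, so $\lceil \log_p((p-1)\gamma(\Gamma)+1)\rceil = k$. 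Substituting into \Cref{prop} gives $s_p(\Gamma) = m - k$, which is the main claim.

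For the concluding sentence, I would observe that $\Gamma$ being non-discrete forces $\gamma(\Gamma) \geq 2$, while the assumption $\gamma(\Gamma) \leq p+1$ gives $\gamma(\Gamma) \leq \frac{p^2 - 1}{p-1}$. Since $2 > 1 = \frac{p-1}{p-1}$, the pair of inequalities $\frac{p-1}{p-1} < \gamma(\Gamma) \leq \frac{p^2-1}{p-1}$ is exactly the case $k=2$ of the main statement, so $s_p(\Gamma) = m-2$.

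There is essentially no obstacle beyond a careful unpacking of the telescoping sums; the nontrivial content is in \Cref{prop}, which I am taking as granted. The only subtlety worth flagging is that the outer ceiling computation relies on the strict inequality on the left of the hypothesis, which is exactly what ensures $\log_p((p-1)\gamma(\Gamma)+1) > k-1$ rather than merely $\geq k-1$; this is why the hypothesis is stated with a strict lower bound and a non-strict upper bound.
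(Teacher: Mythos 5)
Your proof is correct and follows the paper's argument almost verbatim: rewrite the geometric-sum hypothesis as $p^{k-1} < (p-1)\gamma(\Gamma)+1 \leq p^k$, apply $\log_p$ and the ceiling, and invoke Proposition~\ref{prop}; the observation that non-discreteness forces $\gamma(\Gamma)\geq 2$ handles the special case exactly as needed. Nothing to add.
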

\begin{proof}
For assumed $k$, we have
\[
p^{k-1} < (p-1)\gamma (\Gamma) +1 \leq p^k.
\]
Applying $\log_p$ to this inequality implies that
\[
\left \lceil \log_p ((p-1) \gamma (\Gamma)+1)\right \rceil =k.
\]
Proposition~\ref{prop} now finishes the proof.
\end{proof}

Corollary \ref{graf} implies that a simple graph $\Gamma$ can be mapped by a nondegenerate map in $K(\mathbb{F}_p^k)$ when $k$ is such that
\[
p^{k-2}+p^{k-3}+\ldots +1<\gamma (\Gamma) \leq p^{k-1}+p^{k-2}+\ldots +1.
\]
It is easy to realize a map like that. Let $f \colon V(\Gamma) \rightarrow [ \gamma(\Gamma)]$ be a proper coloring of the vertices of $\Gamma$. Since the number of vertices of $K(\mathbb{F}_p^k)$ is $p^{k-1}+\ldots +1$, there is an injection $i \colon  [ \gamma(\Gamma)] \rightarrow K(\mathbb{F}_p^k)$. The composition $i\circ f\colon V(\Gamma )\rightarrow K(\mathbb{F}_p^k)$ has an extension to a simplicial map $\overline{i\circ f}\colon \Gamma \rightarrow K(\mathbb{F}_p^k)$ because the $1$-skeleton of $K(\mathbb{F}_p^k)$ is a complete graph.

\begin{prop} \label{my1} Let $p$, $q$ be prime numbers and let $m$ and $n$ be positive integers. Let $f$ be a nondegenerate map $f\colon K(\F_p^m)\rightarrow K(\F_q^n)$. Then $f$ is an injection.
\end{prop}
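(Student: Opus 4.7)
The plan is to observe that the proposition follows almost immediately from the fact that $K(\F_p^m)$ is $2$-neighborly, with no essential use of the specific form of the target complex. Concretely, I would argue that for any two distinct vertices $l_1\neq l_2$ of $K(\F_p^m)$, the pair $\{l_1,l_2\}$ is itself a $1$-simplex of $K(\F_p^m)$, and then invoke nondegeneracy of $f$ on this $1$-simplex.

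First, I would verify $2$-neighborliness: if $l_1,l_2\in P\F_p^m$ are distinct lines, then $l_1\cap l_2$ is a proper subspace of each $l_i$, hence equals $\{0\}$. Choosing nonzero $v_i\in l_i$, the vectors $v_1,v_2$ are therefore linearly independent, so $\spn(l_1\cup l_2)$ is a $2$-dimensional subspace of $\F_p^m$. Because $\F_p$ is a field, every subspace of $\F_p^m$ is a direct summand, so $\spn(l_1\cup l_2)$ is a $2$-dimensional unimodular subspace. By the definition of $K(\F_p^m)$ (equivalently, by Proposition~\ref{GCD}(c)), $\{l_1,l_2\}$ is a $1$-simplex.

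Now, by definition, a nondegenerate simplicial map $f\colon K(\F_p^m)\to K(\F_q^n)$ sends each $k$-simplex to a $k$-simplex, which is equivalent to saying that the restriction of $f$ to the vertex set of every simplex is injective. Applying this to the $1$-simplex $\{l_1,l_2\}$ gives $f(l_1)\neq f(l_2)$. Since $l_1\neq l_2$ were arbitrary distinct vertices, $f$ is injective. The case $m=1$ is vacuous since $K(\F_p^1)$ has a single vertex.

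There is no real obstacle here; the whole argument is driven by $2$-neighborliness of the vertex-pair structure of $K(\F_p^m)$. Note also that neither the hypothesis $p=q$ nor any matching between the ambient primes is needed, and indeed the same proof would apply to a nondegenerate map from any $2$-neighborly complex into an arbitrary simplicial complex.
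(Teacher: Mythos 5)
Your proof is correct and follows essentially the same approach as the paper: the paper's proof also rests on the observation that the $1$-skeleton of $K(\F_p^m)$ is a complete graph (i.e., $K(\F_p^m)$ is $2$-neighborly), from which nondegeneracy forces injectivity on vertices and hence injectivity of the simplicial map. You have simply spelled out the $2$-neighborliness verification in more detail than the paper does.
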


\begin{proof} By dimensional reasons, if $f$ is a nondegenerate map $m$ must not be greater than $n$. Observe that the $1$-skeletons $K^{(1)}(\F_p^m)$ and  $K^{(1)}(\F_q^n)$ are complete graphs, so nondegeneracy of $f$ implies that $f$ is an injection on the set of vertices of $K(\F_p^m)$. It follows that $f$ is an injection since it is a simplicial map.
\end{proof}

The following is an instant consequence of Proposition \ref{my1}.

\begin{cor}\label{my2}
If there is a nondegenerate map $f\colon K(\F_p^m)\rightarrow K(\F_q^n)$,  then
    \begin{eqnarray*}
    m &\leq & n,\\
f_i \left(  K(\F_p^m)\right)& \leq & f_i \left(  K(\F_q^n)\right)  \text{\,\, for all\,\, } i.
\end{eqnarray*}
\end{cor}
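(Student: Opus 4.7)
The plan is to use Proposition~\ref{my1}, which already establishes that any nondegenerate simplicial map $f\colon K(\F_p^m)\rightarrow K(\F_q^n)$ is injective on the full simplicial complex. Once injectivity is in hand, both inequalities follow by essentially counting-type arguments.

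First I would deduce $m\leq n$ as follows. The simplicial complex $K(\F_p^m)$ is pure of dimension $m-1$ (its maximal simplices correspond to bases of $\F_p^m$, which have $m$ elements). Pick any $(m-1)$-simplex $\sigma$ of $K(\F_p^m)$. Since $f$ is nondegenerate, $f(\sigma)$ is a simplex of $K(\F_q^n)$ of the same dimension $m-1$. But $K(\F_q^n)$ has dimension $n-1$, so $m-1\leq n-1$, i.e.\ $m\leq n$.

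Next I would establish the $f$-vector inequality. Fix $i\geq 0$ and let $\Sigma_i(\mathcal{K})$ denote the set of $i$-simplices of a simplicial complex $\mathcal{K}$, so $f_i(\mathcal{K})=|\Sigma_i(\mathcal{K})|$. Because $f$ is nondegenerate, it sends each $i$-simplex of $K(\F_p^m)$ to an $i$-simplex of $K(\F_q^n)$; this defines a map $f_*\colon \Sigma_i(K(\F_p^m))\to \Sigma_i(K(\F_q^n))$. By Proposition~\ref{my1}, $f$ is injective as a map of simplicial complexes, and in particular as a map on the sets of $i$-simplices (two distinct $i$-simplices $\sigma,\tau$ would map to a common image $f(\sigma)=f(\tau)$, contradicting injectivity on vertex sets). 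Hence $f_*$ is injective, which gives
\[
f_i(K(\F_p^m))=|\Sigma_i(K(\F_p^m))|\leq |\Sigma_i(K(\F_q^n))|=f_i(K(\F_q^n)).
\]

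There is no real obstacle here: the work was already done in Proposition~\ref{my1}, whose proof exploits that the $1$-skeletons of the universal complexes $K(\F_p^m)$ and $K(\F_q^n)$ are complete graphs, so that nondegeneracy on edges forces vertex-injectivity, and then simpliciality propagates this to injectivity of $f$ as a whole. Granted that, the corollary is a short direct consequence.
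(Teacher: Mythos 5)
Your proof is correct and takes essentially the same route the paper intends: the paper calls Corollary~\ref{my2} an ``instant consequence'' of Proposition~\ref{my1}, whose proof already notes the dimensional bound $m\leq n$ and establishes injectivity, and your argument simply spells out how vertex-injectivity propagates to injectivity on the sets of $i$-simplices, giving the $f$-vector inequality.
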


For any two prime numbers $p$ and $q$, define a function $\omega_{p, q}(n)$ that assigns to each positive integer $n$ the minimal integer $r$ such that there is a nondegenerate map from $K(\F_p^n)$ to $K(\F_q^r)$. Similarly, for a prime number $p$ define a function $\theta_p(n)$ such that $\theta_p (n)$ is the minimal integer $r$ such that there is a nondegenerate map from $K(\F_p^n)$ to $K(\mathbb{Z}^r)$.

\begin{prop} The functions $\omega_{p, q}(n)$ and $\theta_{p}(n)$ are increasing.
\end{prop}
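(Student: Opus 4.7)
The plan is to produce a canonical nondegenerate simplicial map $\iota_n\colon K(\F_p^n)\hookrightarrow K(\F_p^{n+1})$ induced by the standard linear inclusion $\F_p^n\hookrightarrow \F_p^{n+1}$ onto the first $n$ coordinates. The linear inclusion is injective and sends each line of $\F_p^n$ to a line of $\F_p^{n+1}$; since it preserves linear independence and rank, a set of lines spanning a $k$-dimensional unimodular subspace of $\F_p^n$ maps to a set of lines spanning a $k$-dimensional unimodular subspace of $\F_p^{n+1}$. By Proposition~\ref{GCD}(c), $\iota_n$ will then be a nondegenerate simplicial map.

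Once $\iota_n$ is available, monotonicity of $\omega_{p,q}$ follows in a single step. I would set $r=\omega_{p,q}(n+1)$ and fix a nondegenerate simplicial map $g\colon K(\F_p^{n+1})\to K(\F_q^r)$ realizing this minimum; since nondegeneracy is preserved under composition (it is checked simplex by simplex), the composite $g\circ\iota_n\colon K(\F_p^n)\to K(\F_q^r)$ is nondegenerate, giving $\omega_{p,q}(n)\leq r=\omega_{p,q}(n+1)$. The same composition trick, applied to a minimal nondegenerate map $h\colon K(\F_p^{n+1})\to K(\Z^{\theta_p(n+1)})$, will produce $\theta_p(n)\leq \theta_p(n+1)$.

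If the statement is intended in the strict sense, the extra input needed will be Proposition~\ref{my1}, by which every nondegenerate map $K(\F_p^{n+1})\to K(\F_q^r)$ is an injection. In the hypothetical case of equality $\omega_{p,q}(n)=\omega_{p,q}(n+1)=r$, this yields a simplicial embedding $K(\F_p^{n+1})\hookrightarrow K(\F_q^r)$, and I would aim to combine the dimensional bound $r\geq n+1$ with the vertex-count inequality $\tfrac{p^{n+1}-1}{p-1}\leq \tfrac{q^r-1}{q-1}$ from Corollary~\ref{my2} and the minimality of $r$ for $K(\F_p^n)$ to derive a contradiction; the case of $\theta_p$ is handled analogously, with $K(\Z^r)$ replacing $K(\F_q^r)$. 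The main obstacle will be precisely this last step: the composition argument yields only monotonicity, and upgrading it to a strict gap requires a careful numerical comparison via the explicit $f$-vector formulas of Lemma~\ref{month}, whereas the monotone statement itself is essentially a one-line consequence of the existence of $\iota_n$.
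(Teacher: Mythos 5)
Your core argument is essentially the same as the paper's: the paper also proves the statement by composing the natural inclusion $K(\F_p^n)\hookrightarrow K(\F_p^{n+1})$ with a minimal nondegenerate map out of $K(\F_p^{n+1})$, citing Proposition~\ref{ay1} to wrap up the composition step (the paper's citation is arguably a small mismatch---the argument is really the subcomplex/composition principle used also in Proposition~\ref{emb}, or Proposition~\ref{ay2} applied with $\{L^i\}=\{K(\F_q^i)\}$---but the idea is identical to yours). Your extra third paragraph about upgrading to strict monotonicity is not needed: the paper's proof establishes only weak monotonicity ($\omega_{p,q}(n)\le\omega_{p,q}(n+1)$ and likewise for $\theta_p$), and that is all the statement is intended to assert, so your first two paragraphs already recover the paper's proof.
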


\begin{proof} The statement follows from the inclusion of $K({\F_p}^n)$ into $K({\F_p}^{n+1})$ and Proposition \ref{ay1}.
\end{proof}

\begin{thm}\label{bounds1}
For every $n\in\N$ and every prime numbers $p$ and $q$ we have
$$\left\lceil \log_q \left(\frac{(q-1)(p^n-1)}{p-1}+1\right) \right\rceil \leq \omega_{p, q}(n)\leq \frac{p^n-1}{p-1}.$$
\end{thm}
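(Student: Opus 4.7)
The plan is to prove the two inequalities separately, and both reduce to counting vertices.

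For the upper bound $\omega_{p,q}(n)\leq \frac{p^n-1}{p-1}$, let $m=\frac{p^n-1}{p-1}$, which by \Cref{month2} is precisely the number of vertices of $K(\F_p^n)$. I would construct a nondegenerate map $K(\F_p^n)\to K(\F_q^m)$ as a two-step composition. First, any enumeration of the vertices of $K(\F_p^n)$ gives a nondegenerate simplicial map $K(\F_p^n)\hookrightarrow \Delta^{m-1}$, since $\Delta^{m-1}$ contains every subset of its $m$ vertices as a simplex. Second, choosing a basis $e_1,\dots,e_m$ of $\F_q^m$, the set $\{l(e_1),\dots,l(e_m)\}$ spans $\F_q^m$ as a unimodular set, and hence $\{l(e_1),\dots,l(e_m)\}$ is a top-dimensional simplex of $K(\F_q^m)$; this gives an embedding $\Delta^{m-1}\hookrightarrow K(\F_q^m)$. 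The composite is a nondegenerate simplicial map $K(\F_p^n)\to K(\F_q^m)$, so $\omega_{p,q}(n)\leq m$.

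For the lower bound, suppose $r=\omega_{p,q}(n)$, so there exists a nondegenerate simplicial map $f\colon K(\F_p^n)\to K(\F_q^r)$. By \Cref{my2}, we have $f_0(K(\F_p^n))\leq f_0(K(\F_q^r))$. Using \Cref{month2}, this translates to
\[
\frac{p^n-1}{p-1}\leq \frac{q^r-1}{q-1},
\]
which rearranges to $q^r\geq \frac{(q-1)(p^n-1)}{p-1}+1$. Applying $\log_q$ and using that $r$ is a positive integer gives
\[
r\geq \left\lceil \log_q\!\left(\frac{(q-1)(p^n-1)}{p-1}+1\right)\right\rceil,
\]
which is exactly the desired lower bound.

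I do not expect a serious obstacle: both inequalities are entirely consequences of vertex-count constraints, with the lower bound relying on the injectivity statement in \Cref{my1} (which powers \Cref{my2}) and the upper bound exploiting the full simplex present in $K(\F_q^m)$ on any standard basis. The only mild point of care is making sure the simplicial map on $\Delta^{m-1}$ is genuinely nondegenerate when composed, but this is immediate because both inclusions are injective on vertices and send simplices to simplices of the same dimension.
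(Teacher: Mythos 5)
Your proof is correct and follows essentially the same approach as the paper: the lower bound from the vertex-count inequality supplied by \Cref{my2} (rearranged and taking $\log_q$), and the upper bound from the composite inclusion $K(\F_p^n)\hookrightarrow \Delta^{m-1}\hookrightarrow K(\F_q^m)$ with $m=\tfrac{p^n-1}{p-1}$. You merely spell out the elementary details that the paper leaves implicit.
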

\begin{proof}
Corollary \ref{my2} implies the following relation \begin{equation}\label{ob1}
\frac{p^n-1}{p-1}\leq \frac{q^{\omega_{p,q} (n)}-1}{q-1}.
\end{equation}
On the other hand, $K(\F_p^n)$ includes into $K({\F_q}^{\frac{p^n-1}{p-1}})$ via the composition of the inclusions
\begin{equation}\label{ob2}
K(\F_p^n)\xhookrightarrow{} \Delta^{\frac{p^n-1}{p-1}-1}\xhookrightarrow{} K({\F_q}^{\frac{p^n-1}{p-1}}).
\end{equation}
We deduce the desired inequalities from the \eqref{ob1} and \eqref{ob2}.
\end{proof}

By Theorem~\ref{bounds1}, the function $\omega_{p, q}(n)$ is bounded above with $ \frac{p^n-1}{p-1}$ implying that it is well defined.

For any positive integer $n$, there is a composition of nondegenerate maps
\[
K(\F_p^n)\rightarrow K(\Z^{\theta_p (n)})\rightarrow K(\F_q^{\theta_{p}(n)})
\]implying the following statement.

\begin{prop}
\label{estimate} For every positive integer $n$ and any two prime numbers $p$ and $q$, the following inequality holds $$\theta_p (n)\geq \omega_{p, q} (n).$$\qed
\end{prop}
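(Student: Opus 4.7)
The plan is to exploit the composition of nondegenerate maps already suggested in the paragraph preceding the proposition. The key observation is that both $\theta_p(n)$ and $\omega_{p,q}(n)$ are defined as minima over targets of nondegenerate maps out of $K(\F_p^n)$, so to prove $\theta_p(n) \geq \omega_{p,q}(n)$ it suffices to exhibit, for the value $r = \theta_p(n)$, a nondegenerate simplicial map $K(\F_p^n) \to K(\F_q^r)$; then by minimality of $\omega_{p,q}(n)$, we immediately get $\omega_{p,q}(n) \leq r = \theta_p(n)$.

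First I would invoke the definition of $\theta_p(n)$ to obtain a nondegenerate simplicial map
\[
g \colon K(\F_p^n) \longrightarrow K(\Z^{\theta_p(n)}).
\]
Next I would use the mod $q$ reduction map from $K(\Z^r) $ to $K(\F_q^r)$ (a map used already in the proof of Corollary~\ref{cor5.8}): reducing each generating vector modulo $q$ sends a line in $\Z^r$ to a line in $\F_q^r$, and the nondegeneracy of this map follows because a unimodular subset of $\Z^r$ remains unimodular after reduction modulo a prime. Denote this reduction by
\[
\pi_q \colon K(\Z^{\theta_p(n)}) \longrightarrow K(\F_q^{\theta_p(n)}).
\]

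Finally I would take the composition $\pi_q \circ g \colon K(\F_p^n) \to K(\F_q^{\theta_p(n)})$. Since the composition of nondegenerate simplicial maps is nondegenerate, this produces a witness showing that a nondegenerate map from $K(\F_p^n)$ into $K(\F_q^r)$ exists for $r = \theta_p(n)$, whence $\omega_{p,q}(n) \leq \theta_p(n)$ by the minimality defining $\omega_{p,q}$.

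There is essentially no serious obstacle here; the only small point worth recording carefully is the nondegeneracy of the reduction $\pi_q$, which follows from Proposition~\ref{GCD}(b): if a set of vectors in $\Z^r$ has some $(k{+}1)\times(k{+}1)$ minor equal to $\pm 1$, then the same minor is a unit in $\F_q$, so unimodularity is preserved under reduction modulo any prime $q$. Everything else is a direct unwinding of the definitions of $\theta_p$ and $\omega_{p,q}$.
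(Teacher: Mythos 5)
Your proposal is correct and matches the paper's own argument exactly: the paper proves the proposition by noting the composition of nondegenerate maps $K(\F_p^n)\to K(\Z^{\theta_p(n)})\to K(\F_q^{\theta_p(n)})$, where the second map is mod $q$ reduction, and concluding $\omega_{p,q}(n)\leq\theta_p(n)$. Your added justification via Proposition~\ref{GCD}(b) for nondegeneracy of the reduction is a correct and useful detail that the paper leaves implicit.
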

 Using  Proposition~\ref{estimate}, the lower bound of $\omega_{p, q} (n)$ given in Theorem~\ref{bounds1}  is also a lower bound of $\theta_p (n)$. The largest lower bound is reached for $q=2$, so
\[
\theta_p (n)\geq \left\lceil \log_2 \left(\frac{p^n-1}{p-1}+1\right) \right\rceil.
\]
In a similar way, using~\eqref{ob2}, we have
\[
\theta_p (n)\leq \frac{p^n-1}{p-1}.
\]
\subsection{Some calculations of $s_p$}

The skeletons of simplices are a pivotal class of simplicial complexes for determining the Buchstaber number of an arbitrary simplicial complex. Fukukawa and Masuda studied the real Buchstaber invariant of the skeleton of simplex in \cite{FukMas}. The subsequent results extend theirs for the mod $p$ Buchstaber invariants, but we prove them using another method.

Let $\Delta^m$ be the $m$-simplex on the set of vertices $[m+1]$ and let $\Delta_{(k)}^m$ be its $k$-skeleton.

\begin{prop}A nondegenerate simplicial map $f \colon \Delta_{(k)}^{m} \longrightarrow X(\F_p^n)$ is an embedding if $k\geq 1$.
\end{prop}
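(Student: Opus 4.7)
The plan is to reduce the statement to the observation that nondegeneracy on $1$-simplices forces $f$ to be injective on vertices, which is precisely what it means for a simplicial map to be an embedding.

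First I would note the combinatorial fact that for $k\geq 1$, the $1$-skeleton of $\Delta^m_{(k)}$ coincides with the $1$-skeleton of $\Delta^m$, namely the complete graph on $[m+1]$. Consequently, every pair of distinct vertices $i,j\in[m+1]$ spans a $1$-simplex of $\Delta^m_{(k)}$. This is the only place where the hypothesis $k\geq 1$ is used.

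Next I would invoke nondegeneracy of $f$ on each such edge. By definition, a nondegenerate simplicial map sends an $r$-simplex to an $r$-simplex; in particular, $f(\{i,j\})$ must be a $1$-simplex of $X(\F_p^n)$, which forces $f(i)\neq f(j)$ (a $0$-simplex has only one vertex). Hence $f$ is injective on the vertex set of $\Delta^m_{(k)}$.

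Finally I would conclude that $f$ is an embedding. For any simplex $\sigma=\{v_{i_0},\ldots,v_{i_s}\}\in\Delta^m_{(k)}$, the image $f(\sigma)=\{f(v_{i_0}),\ldots,f(v_{i_s})\}$ is a simplex of $X(\F_p^n)$ with exactly $s+1$ distinct vertices by vertex-injectivity, so $f$ induces a bijection between simplices of $\Delta^m_{(k)}$ and their images, which form a subcomplex of $X(\F_p^n)$ isomorphic to $\Delta^m_{(k)}$. There is no real obstacle here: the only content is the passage from nondegeneracy on edges to injectivity on vertices, enabled by the assumption $k\geq 1$ which guarantees that all edges of $\Delta^m$ are present in the domain.
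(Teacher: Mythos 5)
Your argument is correct and is essentially the same as the paper's: both proofs observe that for $k\geq 1$ every pair of vertices of $\Delta^m_{(k)}$ spans an edge, so nondegeneracy forces $f$ to be injective on vertices, which for a simplicial map out of a skeleton of a simplex is enough to conclude it is an embedding. Your write-up simply spells out the final step (vertex-injectivity implies embedding) that the paper leaves implicit.
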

\begin{proof}
Every two vertices in $\Delta_{(k)}^{m}$
are connected by an edge, so the images by a nondegenerate map must be distinct. Therefore $f$ is an embedding.
\end{proof}

\begin{prop}\label{emb}
For every prime number $p$ and every $0\leq k\le m$ we have
\[
s_p(\Delta_{(k+1)}^{m+1})\leq s_p(\Delta_{(k)}^{m})\leq s_p(\Delta_{(k)}^{m+1}) \leq s_p(\Delta_{(k)}^{m}) +1.
\]
\end{prop}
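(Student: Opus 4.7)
The three inequalities in the chain can be proved separately. My plan is to introduce the auxiliary invariant $\rho(\mathcal{L}) := \min\{r : \text{there is a nondegenerate simplicial map } \mathcal{L}\to K(\F_p^r)\}$, so that $s_p(\Delta^n_{(k)}) = (n+1)-\rho(\Delta^n_{(k)})$. Under this substitution the statement unpacks as $\rho(\Delta^m_{(k)}) \leq \rho(\Delta^{m+1}_{(k)}) \leq \rho(\Delta^m_{(k)})+1 \leq \rho(\Delta^{m+1}_{(k+1)})$, which I will establish in order of increasing difficulty.

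The leftmost inequality $\rho(\Delta^m_{(k)}) \leq \rho(\Delta^{m+1}_{(k)})$ is a restriction: $\Delta^m_{(k)}$ is the full subcomplex of $\Delta^{m+1}_{(k)}$ on any $m+1$ of its $m+2$ vertices, and the restriction of a nondegenerate map remains nondegenerate. The middle inequality $\rho(\Delta^{m+1}_{(k)}) \leq \rho(\Delta^m_{(k)})+1$ is an extension: given a nondegenerate $f\colon \Delta^m_{(k)}\to K(\F_p^r)$ with $r = \rho(\Delta^m_{(k)})$, I regard $K(\F_p^r)\subset K(\F_p^{r+1})$ via the first $r$ coordinates and send the new vertex $v_{m+1}$ of $\Delta^{m+1}_{(k)}$ to the line $\langle e_{r+1}\rangle$. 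Because $e_{r+1}$ is linearly independent from $\F_p^r$, adjoining this line to the image of any $(k-1)$-simplex from $\Delta^m_{(k)}$ yields a $(k+1)$-dimensional unimodular subspace, i.e.\ a $k$-simplex of $K(\F_p^{r+1})$.

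The rightmost inequality $\rho(\Delta^m_{(k)})+1 \leq \rho(\Delta^{m+1}_{(k+1)})$ is the main content and requires a quotient construction. Starting from a nondegenerate $f\colon \Delta^{m+1}_{(k+1)}\to K(\F_p^r)$ with $r=\rho(\Delta^{m+1}_{(k+1)})$, I single out a vertex $v_{m+1}$, set $l := f(v_{m+1})$, and consider the canonical projection $\pi\colon \F_p^r \twoheadrightarrow \F_p^r/l \cong \F_p^{r-1}$. Define $\bar f\colon \Delta^m_{(k)}\to K(\F_p^{r-1})$ on the remaining vertices by $\bar f(v_i) := \pi(f(v_i))$; nondegeneracy of $f$ on each edge $\{v_i, v_{m+1}\}$ forces $f(v_i)\neq l$, so $\pi(f(v_i))$ really is a line. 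For a $k$-simplex $\tau \subset \Delta^m_{(k)}$, the set $\tau \cup \{v_{m+1}\}$ is a $(k+1)$-simplex of $\Delta^{m+1}_{(k+1)}$, so the lines $f$ assigns to it span a $(k+2)$-dimensional subspace $W$ containing $l$; quotienting gives that $\bar f(\tau)$ spans the $(k+1)$-dimensional subspace $W/l$ of $\F_p^{r-1}$, which is a $k$-simplex in $K(\F_p^{r-1})$. Nondegeneracy on lower-dimensional faces is automatic because each sits inside some top-dimensional $k$-simplex of $\Delta^m_{(k)}$.

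The main obstacle is recognizing that the rightmost inequality admits this clean quotient-by-a-line interpretation; once that is seen, the unimodularity bookkeeping is immediate because over a field every subspace is a direct summand and no unimodular structure is lost under $\pi$.
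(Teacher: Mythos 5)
Your proof is correct and follows essentially the same strategy as the paper's: a restriction argument for $\rho(\Delta^m_{(k)})\leq\rho(\Delta^{m+1}_{(k)})$, an extension by a new basis vector for $\rho(\Delta^{m+1}_{(k)})\leq\rho(\Delta^m_{(k)})+1$, and a quotient-by-a-line (the paper's version: normalize $f(m+2)=e_n$ and project off the last coordinate of $X(\F_p^n)$) for $\rho(\Delta^m_{(k)})+1\leq\rho(\Delta^{m+1}_{(k+1)})$. The only cosmetic differences are that you work intrinsically in $K(\F_p^r)$ rather than $X(\F_p^r)$ and that your quotient argument treats $k=0$ uniformly, whereas the paper disposes of $k=0$ separately via chromatic numbers.
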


\begin{proof}
If $k=0$ then the chromatic number $\gamma (\Delta_{(0)}^{m})=1$ so by Corollary \ref{cor5.8} we have $s_p (\Delta_{(0)}^{m})=m$. From Proposition \ref{prop} $s_p (\Delta_{(1)}^{m})=m$ so we verify that all inequalities hold in this case. Let us consider the case $k\geq 1$/

Let $f \colon \Delta_{(k+1)}^{m+1} \rightarrow \mathcal{X}^n$ be an embedding. Without loss of generality, we may assume that $f(m+2)=e_n$.
Let us consider a map $\bar{f} \colon \Delta_{(k)}^m \rightarrow \mathcal{X}^{n-1}$ such that $\bar{f}(i) = \pi (f(i))$ where $\pi \colon \F_p^n\rightarrow \F_p^{n-1}$ is the projection on the first $n-1$ coordinates. We will prove that $\bar{f}$ is nondegenerate.

Let us assume the contrary. Then there exist distinct $1 \leq i_1< i_2< \dots< i_j \leq  m + 1$ such that
$j \leq k+1$ and $a_1, a_2, \dots, a_j\in \F_p$ not all being zero such that $a_1\bar{f}(i_1)+\cdots+ a_j\bar{f}(i_j) = (0, \dots, 0)\in \F_p^{n-1}$. Because of nondegeneracy of $f$ we know that $b_1f(i_1)+b_2f(i_2)+\cdots+b_jf(i_j) = (0, \dots, 0)\in \F_p^{n}$, if and only if $b_i=0$ for all $i$, so
$\pi (a_1f(i_1)+a_2f(i_2)+\cdots+a_jf(i_j))= a_1\bar{f}(i_1)+\cdots+ a_j\bar{f}(i_j) = (0, \dots, 0)$ implies that
$a_1f(i_1)+a_2f(i_2)+\cdots+a_jf(i_j)=r e_n$ for some $1\leq r \leq p-1$. Since $j + 1 \leq k + 2$ the
vectors $f(i_1), \dots, f(i_j), f(m + 2)=e_n$ do not span a simplex in $\mathcal{X}^n$ contradicting a
nondegeneracy of $f$.
Thus, we constructed a nondegenerate map from $\Delta_{(k)}^m$ to $\mathcal{X}^{n-1}$. The minimal $r$ such that there exists nondegenerate map $\Delta_{(k)}^m\to \mathcal{X}^r$ is less then $n$ so $s_p(\Delta_{(k)}^m)\ge m+1-(n-1)=s_p(\Delta_{(k+1)}^{m+1})$.

Let $f\colon \Delta_{(k)}^{m}\longrightarrow \mathcal{X}^n$ be a nondegenerate simplicial map. We consider $\mathcal{X}^n$ as a subcomplex of $\mathcal{X}^{n+1}$ by adding $0$ as the last coordinate to all vertices. We also consider $\Delta_{(k)}^m$ as a subcomplex of $\Delta_{(k)}^{m+1}$ by adding vertex $m+2$. Now we define an extension $f'$ of $f$ by defining $f'(m+2)= e_{n+1}$. The map $f'\colon \Delta_{(k)}^{m+1} \longrightarrow \mathcal{X}^{n+1}$ is a nondegenerate simplicial map, and the existence of such map gives the left inequality.

The last inequality follows from Proposition~\ref{ay1} since $\Delta_{(k)}^m$ is a subcomplex of $\Delta_{(k)}^{m+1}$ and if we have a nondegenerate map $\Delta_{(k)}^{m+1} \longrightarrow \mathcal{X}^n$ then we have such a map from its subcomplex.
\end{proof}

\begin{center}\textmd{\textbf{Acknowledgements} }
\end{center}

\medskip
Research on this paper was partially supported by the bilateral project ``Discrete Morse theory and its Applications" funded by the Ministry for Education and Science of the Republic of Serbia and the Ministry of Education, Science and Sport of the Republic of Slovenia. The third author was supported by the Slovenian Research Agency, Grant No. P1-0292. All authors are most grateful to Jelena Grbi\'{c} for various comments, discussions, and contributions to the article.

\bibliographystyle{amsplain}
%\bibliography{mybibliography}
\providecommand{\bysame}{\leavevmode\hbox to3em{\hrulefill}\thinspace}
\providecommand{\MR}{\relax\ifhmode\unskip\space\fi MR }
% \MRhref is called by the amsart/book/proc definition of \MR.
\providecommand{\MRhref}[2]{%
  \href{http://www.ams.org/mathscinet-getitem?mr=#1}{#2}
}
\providecommand{\href}[2]{#2}

\end{document}